\documentclass[11pt]{article}
\usepackage{lmodern}

\usepackage[T1]{fontenc}
\usepackage[utf8]{inputenc}
 
 \usepackage{natbib}
  \usepackage{hyperref}

\usepackage{fullpage}

\usepackage{color}
\definecolor{lgray}{rgb}{0.95,0.95,0.95}
\definecolor{yel}{rgb}{1,0.98,0.92}
\definecolor{mydarkblue}{rgb}{0,0.0,0.8}
\newcommand\bl[1]{{\color{mydarkblue}#1}}
\newcommand\ff{\bl{f}}

\definecolor{mydarkred}{rgb}{0.65,0.0,0.0}
\newcommand\dr[1]{{\color{mydarkred}#1}}
\newcommand\g{\dg{g}}
\newcommand\gc{\dg{g^*}}
\definecolor{mydarkgreen}{rgb}{0,0.47,0}
\newcommand\dg[1]{{\color{mydarkgreen}#1}}
\newcommand\h{\dr{h}}
\newcommand\hc{\dr{h^*}}

\newcommand\K{\mathsf{K}}
\newcommand\Q{\mathsf{Q}}
\newcommand\ze{\mathsf{0}}
\newcommand\Id{\mathsf{I}}
\newcommand\xf{x}
\newcommand{\sqn}[1]{\left\| #1 \right\|^2}
\newcommand\eqdef{\coloneqq}

\usepackage{amsmath, amsfonts,amssymb}
\usepackage{cite}
\DeclareMathOperator*{\argmin}{arg\,min}
\DeclareMathOperator*{\minimize}{minimize}

\usepackage{xspace}
\usepackage{scalefnt}
\definecolor{mydarkredd}{rgb}{0.45,0.16,0.0}
\newcommand{\algn}[1]{{\sf\color{mydarkredd}\scalefont{0.97}{#1}}\xspace}

\usepackage{algorithm,algorithmic}

\usepackage{amsthm}

\theoremstyle{plain}
\newtheorem{theorem}{Theorem}[section]

\newtheorem{lemma}[theorem]{Lemma}
\newtheorem{corollary}[theorem]{Corollary}
\theoremstyle{definition}

\theoremstyle{remark}
\newtheorem{remark}[theorem]{Remark}

\usepackage{authblk}

\title{\textbf{A Nesterov-Accelerated Primal--Dual Splitting Algorithm for Convex Nonsmooth Optimization}}

\date{April 10, 2026}
\author{Laurent Condat\thanks{Corresponding author. Contact: see https://lcondat.github.io/}}
\author{Abdurakhmon Sadiev}
\author{Peter Richt\'arik}
\affil{King Abdullah University of Science and Technology (KAUST),\\ Thuwal, Kingdom of Saudi Arabia}

\usepackage{empheq}

\newcommand*\mycolouredbox[1]{%
\setlength{\fboxsep}{3pt}\colorbox{yel}{\ #1\ }}
{\endEmphEqMainEnv}

\begin{document}
\maketitle

\begin{abstract}
We investigate the integration of Nesterov-type acceleration into primal--dual methods for structured convex optimization. While proximal splitting algorithms 
efficiently handle composite problems of the form $\min_x f(x) + g(x) + h(Kx)$, accelerating their convergence with respect to the smooth term $f$ is notoriously challenging due to the rotational dynamics in the primal--dual space. In this paper,  we overcome this barrier by proposing the Accelerated Proximal Alternating Predictor--Corrector algorithm (APAPC), focusing on the setting where $g(x) = \frac{\mu_g}{2}\|x\|^2$. 
Our analysis reveals that Nesterov momentum can be seamlessly integrated into a primal--dual forward--backward scheme by exploiting the strong convexity of the dual problem to stabilize the accelerated primal updates. Using a unified Lyapunov framework, we establish optimal $\mathcal{O}(1/t^2)$ sublinear convergence rates, as well as accelerated linear convergence when $\mu_g > 0$, across three regimes of dual strong convexity: (i) when $h$ is smooth, (ii) when the linear operator $K^*$ is bounded below, and (iii) for linearly constrained optimization. Furthermore, leveraging recent results on accelerated gradient descent, we characterize the weak convergence of the primal--dual iterates to a saddle-point solution.
\end{abstract}

\section{Introduction}

Let $\mathcal{X}$ and $\mathcal{U}$ be real Hilbert spaces. We consider structured 
convex optimization problems of the form
\begin{equation}
\minimize_{x\in\mathcal{X}} \;\Psi(x)\coloneqq \ff(x)+\g(x)+\h(\K x), \label{eq1}
\end{equation}
where $\ff:\mathcal{X}\rightarrow \mathbb{R}$ is convex and differentiable, $\g:\mathcal{X}\rightarrow \mathbb{R}\cup\{+\infty\}$ and $\h:\mathcal{U}\rightarrow \mathbb{R}\cup\{+\infty\}$ are proper lower semicontinuous convex functions, and $\K:\mathcal{X}\rightarrow \mathcal{U}$ is a nonzero bounded linear operator \citep{bau17}. 
We assume that  $\ff$ is 
$L_\ff$-smooth for some $L_\ff>0$, that is, its gradient $\nabla \ff$ is $L_\ff$-Lipschitz continuous on $\mathcal{X}$. 

Associated with \eqref{eq1}, we consider the dual problem \citep[Section~15.3]{bau17}
\begin{equation}
\minimize_{u\in\mathcal{U}} \; 
\left(\ff+\g\right)^*(-\K^*u)+\hc(u), \label{eq1cda}
\end{equation}
where $\phi^*$ denotes the convex conjugate of a function $\phi$ \citep[Chapter 13]{bau17}, and $\K^*$ is the adjoint 
of $\K$. For any convex function $\phi$, we denote by $\mu_\phi\geq 0$ a constant such that $\phi$ is $\mu_\phi$-strongly convex,  that is, $\phi-\frac{\mu_\phi}{2}\|\cdot\|^2$ is convex.

Problems of the form \eqref{eq1} arise in signal and image processing, 
inverse problems, control, and machine learning, where $\g$ and $\h$ encode nonsmooth regularization terms and constraints \citep{pal09, sra11, bac12, cev14, pol15, %bub15, 
glo16, cha16, sta16, bot18}.

We recall that, for a proper lower semicontinuous 
convex function $\phi$ on $\mathcal{X}$, 
its proximity operator is the mapping $\mathrm{prox}_\phi:x\in\mathcal{X}\mapsto \argmin_{y\in\mathcal{X}} \left\{\phi(y)+\frac{1}{2}\sqn{y-x}\right\}$,  which is well defined and single-valued \citep{par14}. We study fully-split primal--dual proximal algorithms; 
that is, methods that solve the primal and dual problems \eqref{eq1} and \eqref{eq1cda} simultaneously by iteratively evaluating
$\K$, $\K^*$, $\nabla \ff$, and the proximity operators of $\g$ and $\h$ (or equivalently $\hc$ via the Moreau identity $
x=\mathrm{prox}_{\h/\tau} (x)+\frac{1}{\tau} \mathrm{prox}_{\tau \hc}(\tau x) .
$
\citep[Theorem 14.3(ii)]{bau17}). 

For the primal and dual problems \eqref{eq1}--\eqref{eq1cda} to be well posed, 
we assume throughout that there exists $(x^\star,u^\star)\in\mathcal{X}\times\mathcal{U}$ satisfying the optimality conditions
\begin{equation}
\left\{\begin{array}{l}
0 \in  \nabla \ff(x^\star)+{\partial \g(x^\star)}+\K^*u^\star,\\
0 \in \partial \hc(u^\star)-\K x^\star.\end{array}\right.\label{eq2}
\end{equation}
Under these conditions, $x^\star$ is a solution of \eqref{eq1} and $u^\star$ is a solution of \eqref{eq1cda}. 
Moreover, if a suitable constraint qualification holds, 
then \eqref{eq2} is not only sufficient but also necessary for $(x^\star,u^\star)$ to be a primal--dual solution \citep[Section 27.1]{bau17}.

This work investigates how Nesterov-type acceleration \citep{nes13} can be effectively incorporated into primal--dual methods for this class of problems.
Section~\ref{secne} reviews Nesterov's acceleration of proximal gradient descent 
for the minimization of $\ff+\g$. 
Section~\ref{sec2} then introduces our new algorithm, 
which achieves the same accelerated dependence on $\ff+\g$ 
while accommodating the composite term $\h\circ\K$. In the remainder of this section, we review the state of the art and detail our contributions.

\subsection{Related Work}\label{secrel}

Algorithms that solve nonsmooth optimization problems involving several functions through iterative evaluations of gradients and proximity operators are known as 
\emph{proximal splitting algorithms} \citep{com10,bot14,par14,%kom15,
bec17,com21}.
For Problem~\eqref{eq1}, which involves the composite term $\h\circ\K$, several primal--dual algorithms have been proposed. 
We refer to the surveys \citet{con23,com24}  and to the recent developments in  \citet{sal20,con22}. We briefly review the most relevant methods, focusing on linear convergence results. 

\textbf{Proximal Gradient Descent.\ }A prominent method for the simple problem of minimizing $\ff+\g$ is  \emph{Proximal Gradient Descent} (\algn{PGD}), which iterates
$x^{t+1}  \coloneqq \mathrm{prox}_{\gamma\g}\left(x^t-\gamma\nabla \ff(x^t)\right)$, 
where $t\geq 0$ is the iteration index and $\gamma >0$ the stepsize. It is known that if $\gamma \in \big(0,\frac{2}{L_\ff}\big)$, \algn{PGD} achieves a convergence rate  $\mathcal{O}(1/t)$ \citep{bub15,tay18}.  
If, in addition, $\mu_\ff>0$ or $\mu_\g>0$, \algn{PGD} converges linearly to the unique solution $x^\star$, with
$\|x^{t+1}-x^\star\|\leq \frac{\max(1-\gamma\mu_\ff,\gamma L_\ff-1)}{1+\gamma\mu_\g}\|x^t-x^\star\|$ 
\citep[Theorem 6]{pol63},%
\citep[Lemma 1]{con22rp}. 
These rates are not accelerated and can be improved using Nesterov-type acceleration 
\citep{nes13}, as discussed in Section~\ref{secne}.

\textbf{Primal--Dual Hybrid Gradient and Condat--V\~u algorithms.\ }
A key method is the \emph{Primal--Dual Hybrid Gradient} algorithm (\algn{PDHG})  for minimizing 
$\g+\h\circ\K$.  A modified version of  \algn{PDHG} that requires knowledge of $\mu_\g$ converges linearly when $\mu_\g>0$ and $\h$ is $L_\h$-smooth. When the stepsizes are chosen optimally, its complexity, as a number of iterations to reach a given accuracy,
 is accelerated and scales as $\sqrt{\|\K\|^2 L_\h/\mu_\g }$ \citep{cha11a}.%
\footnote{If $\h$ is $L_\h$-smooth, $\h\circ \K$ is $\|\K\|^2 L_\h$-smooth, so $\|\K\|^2 L_\h/\mu_\g$ can be interpreted as the condition number of
$\g+\h\circ\K$. 
 %the primal problem. 
 Moreover, $\hc$ is $1/L_\h$-strongly convex and $\gc \circ -\K^*$ is $\|\K\|^2/\mu_\g$-smooth, so the same quantity also characterizes the condition number of $\gc \circ -\K^* + \hc$.
 %the dual problem. 
 Thus, \algn{PDHG} can be regarded as accelerated in the sense that its complexity depends on the square root of this primal--dual condition number.}
The unmodified \algn{PDHG} has been shown to converge linearly under the same assumptions 
\citep{oco20},\citep[Theorem 7]{con22rp}.

To extend  \algn{PDHG} to Problem~\eqref{eq1} with $\ff\neq 0$,  the  \emph{Condat--V\~u} algorithm (\algn{CV}) was proposed independently by \citet{con13} and \citet{vu13}. \algn{CV} reduces to \algn{PDHG} when $\ff=0$. With stepsizes $\gamma>0$ and $\tau>0$, it iterates
\begin{align*}
x^{t+1}&\coloneqq \mathrm{prox}_{\gamma\g}\big(x^t-\gamma\nabla \ff(x^t)-\gamma\K^* u^{t}\big), \\
u^{t+1}&\coloneqq \mathrm{prox}_{\tau \hc} \big(u^t+\tau \K (2x^{t+1}-x^t)\big) 
\end{align*}
(up to a symmetric form obtained by swapping the order of the updates).
 Linear convergence of 
\algn{CV} has been studied in \citet{cha162,dir25}.

\textbf{Proximal Alternating Predictor--Corrector and three-operator splitting algorithms.\ }
Another method for Problem~\eqref{eq1} with $\g=0$ is the \emph{Proximal Alternating Predictor--Corrector} algorithm (\algn{PAPC}) \citep{dro15}, also proposed independently as 
\algn{PDFP${^2}$O} \citep{che13} and in \citet{lor11}  for regularized least-squares problems. 
With stepsizes $\gamma>0$ and $\tau>0$, \algn{PAPC}  iterates
\begin{align}
\hat{x}^t&\coloneqq x^t-\gamma\nabla \ff(x^t)-\gamma\K^* u^t, \notag\\
u^{t+1}&\coloneqq \mathrm{prox}_{\tau \hc} \big(u^t+\tau \K \hat{x}^t\big), \label{eqpapc}\\ %-b)\\
x^{t+1}&\coloneqq x^t-\gamma\nabla \ff(x^t)-\gamma\K^* u^{t+1}. \notag
\end{align}
The method converges whenever $\gamma \in \big(0,\frac{2}{L_\ff}\big)$ and $\gamma\tau\|\K\|^2\leq 1$, which provides a  larger admissible stepsize range than \algn{CV} \citep{con23}. Both \algn{CV} and \algn{PAPC} can be interpreted as preconditioned forward--backward iterations in the product space $\mathcal{X}\times\mathcal{U}$ 
\citep{com14}, differing only in the choice of preconditioner. 

When $\K=\Id$ is the identity,  the \emph{Three-Operator Splitting} algorithm (\algn{TOS}) is well suited to minimize $\ff+\g+\h$ \citep{dav17}, and its linear convergence has been studied in \citet{lee25}. More generally, two ``twin'' algorithms have been proposed for Problem~\eqref{eq1}, recovering \algn{PDHG}, \algn{PAPC} and \algn{TOS} as special cases: 
the \emph{Primal--Dual Three-Operator Splitting} algorithm (\algn{PD3O})  \citep{yan18}
and the \emph{Primal--Dual Davis--Yin} algorithm (\algn{PDDY}) \citep{sal20}.
Both methods converge under the same stepsize conditions as \algn{PAPC}. 
Their linear convergence was established in \citep{con22}, and refined rates for \algn{PDDY} (and hence \algn{PDHG} and \algn{PAPC}) were derived in \citet{con22rp}.

When $\mu_\ff>0$ or $\mu_\g>0$ and $\h$ is $L_\h$-smooth, the complexity of  \algn{CV},  \algn{PD3O} and \algn{PDDY}, optimized over the stepsizes, scales as
$\frac{L_\ff}{\mu_\ff+\mu_\g}+\sqrt{\frac{\|\K\|^2 L_\h}{\mu_\ff+\mu_\g }}$. The second term exhibits accelerated dependence on the primal--dual condition number, whereas the first term matches the complexity of \algn{PGD}. 
This behavior is expected, since these algorithms treat $\ff$ via a standard gradient descent step $x^t-\gamma\nabla\ff(x^t)$.

\textbf{Acceleration of primal--dual algorithms.\ }
A natural question is therefore whether these primal--dual schemes can be accelerated so as to achieve improved dependence on $L_\ff$. 
Acceleration of primal--dual algorithms 
is notoriously challenging. They have a rotational dynamics in the primal--dual space, and momentum-based acceleration applied naively 
will exacerbate this effect and cause the method to diverge. 
Accelerated variants of  \algn{CV}  have been proposed, initially for $\g=0$ \citep{che142} and later in the general case  \citep{zha19,zha22}, 
but without linear convergence guarantees. 
Recently, a comprehensive analysis of the \emph{Accelerated Condat--V\~u} algorithm (\algn{ACV}) was presented \citep{dri24}.  
Notably, it achieves  accelerated  linear convergence when $\mu_\g>0$ and $\h$ is $L_\h$-smooth, with complexity scaling as
 $\sqrt{\frac{L_\ff}{\mu_\g}}+\sqrt{\frac{\|\K\|^2 L_\h}{\mu_\g }}$.
 
 \subsection{Contributions}
 
 In this work, we study the acceleration of \algn{PAPC}, which has a primal--dual forward--backward structure similar to \algn{CV}, to solve Problem \eqref{eq1} under the specific assumption that $\g = \frac{\mu_\g}{2}\|\cdot\|^2$ for some $\mu_\g \geq 0$. Extending our proposed \emph{Accelerated Proximal Alternating Predictor--Corrector} algorithm (\algn{APAPC}) to handle a general nonsmooth function $\g$ 
 remains an open challenge, as it would necessitate mobilizing a more complex splitting scheme such as \algn{PD3O} or \algn{PDDY}. Furthermore, allowing for a general $\g$ significantly complicates the landscape of convergence guarantees. For example, the linear convergence we establish in Section \ref{seclc} for the linearly constrained setting cannot be achieved with an arbitrary function $\g$, even if it is strongly convex \citep[Section VI]{alg21}. Consequently, we defer the development of fully general accelerated 
 algorithms to future work, and we focus on the structured setting with $\g = \frac{\mu_\g}{2}\|\cdot\|^2$, which effectively captures the key difficulties.

We note that an algorithm termed 
\algn{APAPC} was previously introduced by \citet{kov20} for the specialized problem $\min \ff + \frac{\mu_\g}{2}\|\cdot\|^2$ subject to $\K^* \K x=0$ (with $\mu_\g>0$), which arises in decentralized optimization where positive semidefinite linear constraints model network consensus. This 
 method was later extended to general
 linear constraints 
 \citep{sal22}, a setting we revisit in Section \ref{seclc}. 
 Our proposed \algn{APAPC} substantially generalizes these previous efforts by accommodating an arbitrary nonsmooth composite term $\h \circ \K$. 
 %$\h(\K x)$. 
We also acknowledge the concurrent work of \citet{zhu25}, who proposed the \emph{Accelerated Primal--Dual Fixed-Point} method (\algn{APDFP}). When $\mu_\g=0$, their algorithm coincides with \algn{APAPC}. Their analysis assumes that $\h$ is Lipschitz continuous, so that $\hc$ has a bounded domain, 
to derive guarantees on the partial primal--dual gap. 
We do not impose such assumptions and leave the analysis of the primal--dual gap for future work.%\medskip

Our main contributions are structured as follows:
\begin{itemize}
\item \textbf{A Unified Analysis of Nesterov Acceleration:} In Section~\ref{secne}, we revisit Nesterov acceleration for the minimization of $\ff+\g$. 
Our refined convergence analysis of Accelerated Proximal Gradient Descent (\algn{APGD}) offers a unified treatment of both the general convex and strongly convex regimes, achieving optimal linear convergence simply by capping the momentum parameter. Moreover, we prove weak convergence of the iterates to a solution. This framework lays the foundation for our primal--dual extension.
\item \textbf{Seamless Primal-Dual Acceleration: } We propose \algn{APAPC} for solving Problem \eqref{eq1} with $\g = \frac{\mu_\g}{2}\|\cdot\|^2$, by integrating the decoupled momentum architecture of \algn{APGD} into the forward--backward structure of \algn{PAPC}. Exploiting the strong convexity of the dual problem allows for acceleration on the smooth component $\ff$ while ensuring the decay of the primal--dual Lyapunov function.
\item \textbf{Comprehensive Convergence Rates:} We establish accelerated convergence bounds across three distinct regimes where the dual problem is strongly convex. We prove accelerated linear convergence when the primal problem is also strongly convex ($\mu_\g>0$), and optimal $\mathcal{O}(1/t^2)$ sublinear rates otherwise. While we recover similar complexity bounds as existing algorithms in specific cases (e.g., when $\h$ is smooth), several of our results are new. 
Our results are summarized in the following table:

\begin{tabular}{l||l|l|l}
\hline
Condition on $\h\circ\K$ &$\h$ smooth & $\K$ bounded below & Linear constraint\\
\hline
$\mathcal{O}(1/t^2)$ convergence& Theorem \ref{corcase1a}& Theorem \ref{theoinj}& Theorem \ref{theolc}\\
\hline
Linear convergence when $\mu_\g>0$&Corollary \ref{corcase1b}&Corollary \ref{corcase2b}&Corollary \ref{corcase3b}\\
\hline
\end{tabular}
\item \textbf{Point Convergence: }Building upon our Lyapunov framework and recent results on Accelerated Gradient Descent, 
 we characterize the weak convergence of the iterates for \algn{APAPC}. To the best of our knowledge, this represents the first iterate convergence proof for an accelerated fully split primal--dual algorithm.
\end{itemize}

\section{Revisiting Nesterov Acceleration}\label{secne}

In this section, we consider the simple particular case of \eqref{eq1} with $\h \circ \K=0$:
\begin{equation}
\minimize_{x\in\mathcal{X}} \;\Psi(x)\coloneqq \ff(x)+\g(x). \label{eq1p}
\end{equation}
In that case, $x^\star\in\mathcal{X}$
 is a minimizer of $\Psi$ if and only if it satisfies $0\in {\nabla \ff(x^\star)+{\partial \g(x^\star)}}$ \citep[Corollary 16.48]{bau17}. So, we suppose that a minimizer $x^\star$ of $\Psi$ exists and we let $\Psi^\star\coloneqq \Psi(x^\star)$.

We consider the \emph{Accelerated Proximal Gradient Descent} algorithm (\algn{APGD}). Given a stepsize $\gamma>0$ and a parameter sequence $(a_t)_{t\geq 0}$ with $a_t\geq 1$ for every $t\geq 1$,   
\algn{APGD}, initialized with $z^0=x^0$, iterates, for $t\geq 0$,
\begin{align}
y^{t} & \coloneqq \left(1-\frac{1}{a_{t+1}}\right) x^{t}+\frac{1}{a_{t+1}} z^{t},\label{eqala1}\\
z^{t+1} & \coloneqq \mathrm{prox}_{a_{t+1}\gamma\g}\left(z^t-a_{t+1}\gamma\nabla \ff(y^t)\right),\label{eqala1p}\\
x^{t+1} & \coloneqq \left(1-\frac{1}{a_{t+1}}\right) x^{t}+\frac{1}{a_{t+1}} z^{t+1}. 
\label{eqala1p3}
\end{align}

\algn{APGD} is a proximal extension of Nesterov's Accelerated Gradient Descent (\algn{AGD}) \citep[p.~90]{nes04}, that has appeared in \citet[Section 5]{aus06} and has been studied in \citet{tse08}. 
In \algn{APGD}, $\gamma$ is typically chosen as $\frac{1}{L_\ff}$, but $a_{t+1}\geq 1$ can be arbitrarily large. So, $z^t$ is updated using a proximal gradient descent step with an aggressive stepsize, which is then damped by under-relaxation, as $x^{t+1}$ is a convex combination of this new point and the previous conservative estimate $x^t$ \citep{all14}. Thus, $x^t$ is a weighted average of all estimates $z^0,\ldots, z^t$ computed so far. For instance, we can check by induction that if $a_0=0$ and $a_t=\frac{t+1}{2}$, then $x^t = \sum_{k=1}^t \frac{2k}{t(t+1)} z^k$ for every $t\geq 1$.
If $x^1$ is feasible, i.e., in the domain of $\g$, then $x^t$ remains feasible for every $t\geq 1$, which is a desirable property. This is the case if $x^0$ is feasible or, as we recommend in practice, by setting $a_1=1$, so that $x^1=z^1$.

We observe  that if $a_{t+1}\equiv 1$, then $x^t=z^t=y^t$ and \algn{APGD} reverts to \algn{PGD}, 
which iterates $x^{t+1}  \coloneqq \mathrm{prox}_{\gamma\g}\left(x^t-\gamma\nabla \ff(x^t)\right)$.

We note that \algn{APGD} is different from the popular method \algn{FISTA} \citep{bec092}. After constructing $y^t$, \algn{FISTA} updates $z^t$ and $x^t$ as
\begin{align}
x^{t+1} & \coloneqq \mathrm{prox}_{\gamma\g}\left(y^t-\gamma\nabla \ff(y^t)\right), \label{eqfista2}\\
z^{t+1} & \coloneqq z^t + a_{t+1} (x^{t+1}-y^t).  \label{eqfista3}
\end{align}
\algn{APGD} and \algn{FISTA} are different in the way $\mathrm{prox}_{\g}$ is applied. In both methods, the variable of interest to estimate the solution is $x^t$, not $z^t$, see Remark~\ref{rem15}.  
It may be that
$x^t$ lies on the boundary of the domain of $\g$ in  \algn{FISTA}, while it is in the interior of this domain in \algn{APGD}. For instance, if $\g$ induces sparsity, $x^t$ is typically more sparse in  \algn{FISTA} than in \algn{APGD}. Explicitly promoting structure in the estimate 
is a property that certainly contributed to \algn{FISTA}'s widespread adoption in inverse problems.
However, after $T$ iterations of \algn{APGD}, one can always apply one final iteration of \algn{PGD} to $x^T$. Since \algn{PGD} is a descent method, we have $\Psi(x^{T+1})\leq \Psi(x^T)$, and the final estimate $x^{T+1}$, as the output of  $\mathrm{prox}_{\gamma\g}$, now has the sought properties, 
such as being sparse or low rank. The structural properties of the last iterate matter, and it may not be necessary to impose them at every intermediate iteration. In any case, \algn{APGD}'s decoupling of the estimate $x^t$, which undergoes momentum, from the history-accumulating sequence $z^t$,  which can accommodate 
dual variables, 
is a key property 
of our primal--dual extension in Section \ref{sec2}.

If  $\g=0$ and $y^t$ is formed as in \eqref{eqala1}, \algn{APGD} and \algn{FISTA} 
are the same and revert to \algn{AGD}:
\begin{align}
y^{t} & \coloneqq x^{t}+\frac{a_t-1}{a_{t+1}}(x^{t}-x^{t-1}),\\
x^{t+1} & \coloneqq y^t-\gamma \nabla \ff(y^t),
\end{align}
with $z^t =   x^{t} + (a_{t}-1)(x^t-x^{t-1})$. 
 We refer to \citet{bub15} and \citet[Appendix B]{cha16} for the convergence analysis of \algn{AGD} and \algn{FISTA}, see also refinements in \citet{dos15,att16,auj25}.

 While the convergence properties of \algn{APGD} have been studied \citep{tse08}, 
 we now provide a simple 
 convergence analysis based on a single-step Lyapunov inequality. 
 To the best of our knowledge, this analysis, 
 which 
 captures both the general convex and strongly convex regimes while decoupling $\mu_\ff$ and $\mu_\g$, is new.
  
\begin{theorem}[Single-iteration progress of \algn{APGD}]\label{theo1}
Let $x^\star\in\argmin \Psi$. In \algn{APGD}, we have
\begin{align}
\frac{1+a_{t+1}\gamma\mu_\g}{2\gamma}\|z^{t+1}-x^\star\|^2+a_{t+1}^2 \big(\Psi(\xf^{t+1})-\Psi^\star\big)&\leq\frac{1-\gamma\mu_\ff}{2\gamma}\|z^{t}-x^\star\|^2+(a_{t+1}^2-a_{t+1})\big(\Psi(\xf^{t})-\Psi^\star)\notag\\
&\quad-{\left(\frac{1}{2\gamma}-\frac{L_\ff}{2}\right)}\|z^{t+1}-z^t\|^2,\quad \forall t\geq 0.\label{eqin1}
\end{align}
Consequently, we define the Lyapunov function
\begin{equation*}
\mathcal{E}^t_{x^\star}\coloneqq \frac{1+a_{t}\gamma\mu_\g}{2\gamma}\|z^{t}-x^\star\|^2+a_{t}^2 \big(\Psi(\xf^{t})-\Psi^\star\big),\quad\forall t\geq 0.
\end{equation*}
Then, assuming that $\gamma\leq \frac{1}{L_\ff}$, we can ignore the last term in \eqref{eqin1} and
we have 
\begin{equation*}
\mathcal{E}^{t+1}_{x^\star} \leq \max\left(\frac{1-\gamma\mu_\ff}{1+a_{t}\gamma\mu_\g},\frac{a_{t+1}^2-a_{t+1}}{a_t^2}\right)\mathcal{E}^t_{x^\star} ,\quad\forall t\geq 0.
\end{equation*}
\end{theorem}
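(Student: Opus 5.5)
The plan is the standard estimate-sequence argument for \algn{APGD}, written as a single-step inequality. Set $a=a_{t+1}$. The key identity is $x^{t+1}-y^t=\frac{1}{a}(z^{t+1}-z^t)$, which follows directly from \eqref{eqala1} and \eqref{eqala1p3}.

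\textbf{Step 1 (smooth part).} By $L_\ff$-smoothness,
\[
\ff(x^{t+1})\le \ff(y^t)+\langle\nabla\ff(y^t),x^{t+1}-y^t\rangle+\tfrac{L_\ff}{2a^2}\|z^{t+1}-z^t\|^2 .
\]
Next, write $x^{t+1}-y^t=(1-\frac1a)(x^t-y^t)+\frac1a(z^{t+1}-y^t)$. I then use convexity at $x^t$,
\[
\ff(y^t)+\langle\nabla\ff(y^t),x^t-y^t\rangle\le \ff(x^t),
\]
and $\mu_\ff$-strong convexity at $x^\star$,
\[
\ff(y^t)+\langle\nabla\ff(y^t),x^\star-y^t\rangle\le \ff(x^\star)-\tfrac{\mu_\ff}{2}\|y^t-x^\star\|^2 .
\]
Combining these with weights $(1-\frac1a)$ and $\frac1a$ gives
\[
\ff(x^{t+1})\le(1-\tfrac1a)\ff(x^t)+\tfrac1a\ff(x^\star)-\tfrac{\mu_\ff}{2a}\|y^t-x^\star\|^2+\tfrac1a\langle\nabla\ff(y^t),z^{t+1}-x^\star\rangle+\tfrac{L_\ff}{2a^2}\|z^{t+1}-z^t\|^2 .
\]

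\textbf{Step 2 (prox step and $\g$).} The prox step \eqref{eqala1p} gives $\frac{1}{a\gamma}(z^t-z^{t+1})-\nabla\ff(y^t)\in\partial\g(z^{t+1})$. By $\mu_\g$-strong convexity of $\g$,
\[
\g(z^{t+1})+\langle\nabla\ff(y^t),z^{t+1}-x^\star\rangle+\tfrac{\mu_\g}{2}\|z^{t+1}-x^\star\|^2\le \g(x^\star)+\tfrac{1}{a\gamma}\langle z^t-z^{t+1},z^{t+1}-x^\star\rangle .
\]
The inner product on the right is handled with the three-point identity
\[
2\langle z^t-z^{t+1},z^{t+1}-x^\star\rangle=\|z^t-x^\star\|^2-\|z^{t+1}-x^\star\|^2-\|z^{t+1}-z^t\|^2 .
\]
Convexity of $\g$ along the convex combination \eqref{eqala1p3} gives $\g(x^{t+1})\le(1-\frac1a)\g(x^t)+\frac1a\g(z^{t+1})$.

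\textbf{Step 3 (assembly).} Add the bound of Step 1 to $\frac1a$ times the bound of Step 2, using the convexity inequality for $\g(x^{t+1})$. The gradient inner products cancel, and the terms $\Psi(x^\star)$ combine into $\Psi^\star$. Multiplying by $a^2$ then gives
\[
a^2(\Psi(x^{t+1})-\Psi^\star)+\tfrac{1+a\gamma\mu_\g}{2\gamma}\|z^{t+1}-x^\star\|^2\le(a^2-a)(\Psi(x^t)-\Psi^\star)+\tfrac{1}{2\gamma}\|z^t-x^\star\|^2-\big(\tfrac{1}{2\gamma}-\tfrac{L_\ff}{2}\big)\|z^{t+1}-z^t\|^2-\tfrac{a\mu_\ff}{2}\|y^t-x^\star\|^2 .
\]
This has the right shape except that the target inequality \eqref{eqin1} carries $-\frac{\mu_\ff}{2}\|z^t-x^\star\|^2$ rather than $-\frac{a\mu_\ff}{2}\|y^t-x^\star\|^2$.

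\textbf{Main obstacle: recovering the $\mu_\ff$ term.} Adding $\pm\frac{\mu_\ff}{2}\|z^{t+1}-z^t\|^2$ does not directly give it. Instead I would redo Step 1 with strong convexity used at both anchor points:
\[
\ff(x^t)\ge \ff(y^t)+\langle\nabla\ff(y^t),x^t-y^t\rangle+\tfrac{\mu_\ff}{2}\|x^t-y^t\|^2 ,
\]
together with the strongly convex bound at $x^\star$. Since $z^t-x^\star=a\big(y^t-x^\star\big)-(a-1)(x^t-x^\star)$, Jensen's inequality gives
\[
\tfrac1a\|z^t-x^\star\|^2\le a\|y^t-x^\star\|^2+(a-1)\|x^t-y^t\|^2 ,
\]
after the appropriate rearrangement in $y^t$. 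So the weighted combination $\frac{a\mu_\ff}{2}\|y^t-x^\star\|^2+\frac{a(a-1)\mu_\ff}{2}\|x^t-y^t\|^2$ dominates $\frac{\mu_\ff}{2}\|z^t-x^\star\|^2$, which yields \eqref{eqin1}. The key check here is that $y^t$ lies on the segment so that this convexity bound has exactly the right weights; I expect this bookkeeping to be the most delicate part.

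\textbf{Lyapunov consequence.} When $\gamma\le1/L_\ff$ the last term of \eqref{eqin1} is nonpositive and can be dropped. Then write
\[
\tfrac{1-\gamma\mu_\ff}{2\gamma}\|z^t-x^\star\|^2=\tfrac{1-\gamma\mu_\ff}{1+a_t\gamma\mu_\g}\cdot\tfrac{1+a_t\gamma\mu_\g}{2\gamma}\|z^t-x^\star\|^2
\]
and $(a_{t+1}^2-a_{t+1})(\Psi(x^t)-\Psi^\star)=\frac{a_{t+1}^2-a_{t+1}}{a_t^2}\,a_t^2(\Psi(x^t)-\Psi^\star)$. Bounding each coefficient by the maximum of the two ratios gives $\mathcal{E}^{t+1}_{x^\star}\le\max(\cdot,\cdot)\,\mathcal{E}^t_{x^\star}$.
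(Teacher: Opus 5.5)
Your argument is correct. It uses the same elementary inequalities as the paper, organized differently.

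\textbf{How the paper proceeds.} It first proves Lemma~\ref{lemman1} in Bregman form. It centers everything at $x^\star$ through $D_\ff$ and $D_\g$, with $\tilde\nabla\g(x^\star)=-\nabla\ff(x^\star)$. It then splits $\langle\nabla\ff(y^t)-\nabla\ff(x^\star),z^{t+1}-x^\star\rangle$ into three inner products anchored at $y^t$ (against $x^{t+1}$, $x^\star$, $x^t$). Finally it plugs the lemma into the norm identity for $\|z^{t+1}-x^\star\|^2$.

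\textbf{How you proceed.} You run the classical estimate-sequence bookkeeping on raw function values. You split $x^{t+1}-y^t=(1-\frac1a)(x^t-y^t)+\frac1a(z^{t+1}-y^t)$, and the two terms $\frac1a\langle\nabla\ff(y^t),z^{t+1}-x^\star\rangle$ coming from the $\ff$-bound and the $\g$-bound cancel directly.

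\textbf{What each route buys.} Your route is slightly more general. For \eqref{eqin1} itself you never use that $x^\star$ is a minimizer, so it holds for any reference point $x\in\mathrm{dom}\,\g$ with $\Psi^\star$ replaced by $\Psi(x)$. Minimality is needed only for the nonnegativity of $\Psi(x^t)-\Psi^\star$ in the Lyapunov step. The paper's centered form, combined with the sharper smooth/strongly-convex interpolation inequality in place of plain strong convexity, yields extra negative terms $-\frac{a_{t+1}}{2(L_\ff-\mu_\ff)}\|\cdots\|^2$. These are irrelevant here. They are reused in the primal--dual analysis (Theorem~\ref{theo2} and the $\K^*$-bounded-below case). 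There $x^\star$ is not a minimizer of $\ff+\g$ alone, and the Bregman terms assemble directly into the Lagrangian gap.

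\textbf{One slip to fix.} The correct Jensen bound, from $\frac1a(z^t-x^\star)=\frac1a(y^t-x^\star)+\frac{a-1}{a}(y^t-x^t)$, is $\frac1a\|z^t-x^\star\|^2\le\|y^t-x^\star\|^2+(a-1)\|x^t-y^t\|^2$. Equivalently, $\|z^t-x^\star\|^2\le a\|y^t-x^\star\|^2+a(a-1)\|x^t-y^t\|^2$. Your display has coefficient $a$ on $\|y^t-x^\star\|^2$. That version is true but too weak to give the domination of $\frac{\mu_\ff}{2}\|z^t-x^\star\|^2$ that you then (correctly) assert.
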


The proof of Theorem \ref{theo1} is simple once we have the following Lemma, proved in Section~\ref{secprooflem1}. In \algn{APGD}, we introduce, for every $t\geq 0$, the subgradient $ \tilde{\nabla}\g(z^{t+1})\in \partial \g(z^{t+1})$ such that $z^{t+1}+a_{t+1}\gamma \tilde{\nabla}\g(z^{t+1})
=z^t-a_{t+1}\gamma\nabla \ff(y^t)$; it is well defined by construction of $z^{t+1}$ in \eqref{eqala1p} \citep[Proposition 16.44]{bau17}.
\begin{lemma}\label{lemman1}
In \algn{APGD}, 
for every $x^\star\in\argmin \Psi$ and $t\geq 0$, we have
\begin{align*}
-a_{t+1}\big\langle \tilde{\nabla}\g(z^{t+1})+\nabla \ff(y^t),z^{t+1}-x^\star\big\rangle
&\leq-a_{t+1}^2 \big(\Psi(x^{t+1})-\Psi^\star\big)+(a_{t+1}^2-a_{t+1})\big(\Psi(x^{t})-\Psi^\star\big)\\
&\quad-\textstyle{\frac{a_{t+1}\mu_\g}{2}\|z^{t+1}-x^\star\|^2-{\frac{\mu_\ff}{2}}\|z^t -x^\star\|^2+\frac{L_\ff}{2}
\|z^{t+1}-z^t\|^2} \\
&\quad  -{\textstyle\frac{a_{t+1}}{2(L_\ff-\mu_\ff)} \sqn{\nabla \ff(y^t) - \nabla \ff(x^\star) - \mu_\ff(y^t - x^\star)} }\\
&\quad -{\textstyle\frac{a_{t+1}^2-a_{t+1}}{2(L_\ff-\mu_\ff)} \sqn{\nabla \ff(x^t) - \nabla \ff(y^t) - \mu_\ff(x^t - y^t)} }
\end{align*}
(with the last two term set to zero if $\mu_\ff=L_\ff)$.
\end{lemma}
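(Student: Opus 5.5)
We prove Lemma~\ref{lemman1} by splitting the left-hand side into its $\g$ part and its $\ff$ part, and bounding each with an interpolation inequality anchored at the right points. The key structural identity is $a_{t+1}(z^{t+1}-x^\star) = a_{t+1}^2(x^{t+1}-x^\star) - (a_{t+1}^2-a_{t+1})(x^t-x^\star)$, which follows from \eqref{eqala1p3}. Similarly, \eqref{eqala1} gives $a_{t+1}(z^{t}-x^\star)= a_{t+1}^2(y^t-x^\star)-(a_{t+1}^2-a_{t+1})(x^t-x^\star)$. Subtracting the two, and using $x^{t+1}-y^t = \frac{1}{a_{t+1}}(z^{t+1}-z^t)$, we get $a_{t+1}(z^{t+1}-z^t)=a_{t+1}^2(x^{t+1}-y^t)$.

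\textbf{The $\g$ part.} By $\mu_\g$-strong convexity of $\g$ and $\tilde\nabla\g(z^{t+1})\in\partial\g(z^{t+1})$,
\[
-a_{t+1}\langle\tilde\nabla\g(z^{t+1}),z^{t+1}-x^\star\rangle\le a_{t+1}\big(\g(x^\star)-\g(z^{t+1})\big)-\tfrac{a_{t+1}\mu_\g}{2}\|z^{t+1}-x^\star\|^2 .
\]
Since $x^{t+1}$ is the convex combination $(1-\frac1{a_{t+1}})x^t+\frac1{a_{t+1}}z^{t+1}$, convexity gives
\[
a_{t+1}\g(z^{t+1})\ge a_{t+1}^2\g(x^{t+1})-(a_{t+1}^2-a_{t+1})\g(x^t).
\]
Hence the $\g$ part is at most $-a_{t+1}^2\big(\g(x^{t+1})-\g^\star\big)+(a_{t+1}^2-a_{t+1})\big(\g(x^t)-\g^\star\big)-\frac{a_{t+1}\mu_\g}{2}\|z^{t+1}-x^\star\|^2$, where we write $\g^\star=\g(x^\star)$. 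Dropping the strong-convexity remainder in the Jensen step is harmless.

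\textbf{The $\ff$ part.} Write $-a_{t+1}\langle\nabla\ff(y^t),z^{t+1}-x^\star\rangle$ as the sum of three terms:
\[
-a_{t+1}\langle\nabla\ff(y^t),z^{t+1}-z^t\rangle-a_{t+1}^2\langle\nabla\ff(y^t),y^t-x^\star\rangle+(a_{t+1}^2-a_{t+1})\langle\nabla\ff(y^t),y^t-x^\star\rangle .
\]
Using the second identity, the last two combine as $-a_{t+1}\langle\nabla\ff(y^t),y^t-x^\star\rangle-(a_{t+1}^2-a_{t+1})\langle\nabla\ff(y^t),x^t-y^t\rangle$. We bound each piece as follows.

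\emph{First term.} By $L_\ff$-smoothness,
\[
-a_{t+1}\langle\nabla\ff(y^t),z^{t+1}-z^t\rangle=-a_{t+1}^2\langle\nabla\ff(y^t),x^{t+1}-y^t\rangle\le a_{t+1}^2\big(\ff(y^t)-\ff(x^{t+1})\big)+\tfrac{L_\ff}{2}\|z^{t+1}-z^t\|^2,
\]
because $a_{t+1}^2\|x^{t+1}-y^t\|^2=\|z^{t+1}-z^t\|^2$.

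\emph{Term in $y^t-x^\star$.} Use the interpolation inequality for $\mu_\ff$-strongly convex, $L_\ff$-smooth functions between $y^t$ and $x^\star$:
\[
\ff(x^\star)\ge\ff(y^t)+\langle\nabla\ff(y^t),x^\star-y^t\rangle+\tfrac{\mu_\ff}{2}\|y^t-x^\star\|^2+\tfrac{1}{2(L_\ff-\mu_\ff)}\|\nabla\ff(y^t)-\nabla\ff(x^\star)-\mu_\ff(y^t-x^\star)\|^2 .
\]
Multiplied by $a_{t+1}$, this yields the first gradient-residual term.

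\emph{Term in $x^t-y^t$.} Use the same interpolation inequality between $y^t$ and $x^t$, multiplied by $a_{t+1}^2-a_{t+1}$. This yields the second residual term, together with $\frac{\mu_\ff}{2}(a_{t+1}^2-a_{t+1})\|x^t-y^t\|^2$.

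Summing, the $\ff(y^t)$ values cancel: they appear with coefficients $a_{t+1}^2-a_{t+1}-(a_{t+1}^2-a_{t+1})=0$. What remains is $-a_{t+1}^2(\ff(x^{t+1})-\ff^\star)+(a_{t+1}^2-a_{t+1})(\ff(x^t)-\ff^\star)$, which adds to the $\g$ part to give the $\Psi$ terms.

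\textbf{Main obstacle: the $\mu_\ff$ terms.} We still need
\[
-\tfrac{\mu_\ff}{2}\big(a_{t+1}\|y^t-x^\star\|^2+(a_{t+1}^2-a_{t+1})\|x^t-y^t\|^2\big)\le-\tfrac{\mu_\ff}{2}\|z^t-x^\star\|^2 .
\]
This is the step where the coefficients must be checked carefully. We have $z^t-x^\star=a_{t+1}(y^t-x^\star)-(a_{t+1}-1)(x^t-x^\star)=(y^t-x^\star)+(a_{t+1}-1)(y^t-x^t)$. By convexity of $\|\cdot\|^2$ with weights $\frac1{a_{t+1}}$ and $\frac{a_{t+1}-1}{a_{t+1}}$,
\[
\|z^t-x^\star\|^2\le a_{t+1}\|y^t-x^\star\|^2+\tfrac{a_{t+1}^2}{a_{t+1}-1}(a_{t+1}-1)^2\|y^t-x^t\|^2=a_{t+1}\|y^t-x^\star\|^2+(a_{t+1}^2-a_{t+1})\|y^t-x^t\|^2 ,
\]
which is exactly what is needed. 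The case $a_{t+1}=1$ is trivial, and at $t=0$ we have $x^0=z^0$, which causes no problem. If $\mu_\ff=L_\ff$, we use the plain strong-convexity inequality in place of interpolation, so both residual terms are zero.
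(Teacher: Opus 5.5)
Your proof is correct and is essentially the paper's argument. You split into the $\g$ part (strong convexity at $z^{t+1}$, then convexity along $x^{t+1}=(1-\frac{1}{a_{t+1}})x^t+\frac{1}{a_{t+1}}z^{t+1}$) and the $\ff$ part (smoothness between $y^t$ and $x^{t+1}$, interpolation between $y^t$ and $x^\star$ and between $y^t$ and $x^t$, then Jensen on $z^t-x^\star$), exactly as the paper does. The only difference is that the paper phrases everything through the Bregman distances $D_\ff,D_\g$ centered at $x^\star$, which is equivalent since the linear terms $\pm\langle\nabla\ff(x^\star),\cdot\rangle$ cancel in the sum.

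Fix two slips in the write-up. First, the third term of your decomposition should be $(a_{t+1}^2-a_{t+1})\langle\nabla\ff(y^t),x^t-x^\star\rangle$. The last two terms then combine as $-a_{t+1}\langle\nabla\ff(y^t),y^t-x^\star\rangle+(a_{t+1}^2-a_{t+1})\langle\nabla\ff(y^t),x^t-y^t\rangle$, with a plus sign; this is the sign your interpolation step and the cancellation of $\ff(y^t)$ actually use. Second, in the Jensen step the coefficient is $\frac{a_{t+1}}{a_{t+1}-1}(a_{t+1}-1)^2$, not $\frac{a_{t+1}^2}{a_{t+1}-1}(a_{t+1}-1)^2$; your final value $a_{t+1}^2-a_{t+1}$ is right.
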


\begin{proof}[Proof of Theorem \ref{theo1}]
Let $x^\star\in\argmin \Psi$ 
and $t\geq 0$. 
We have $z^{t+1}-z^t=-a_{t+1}\gamma\big(\tilde{\nabla}\g(z^{t+1})%-\tilde{\nabla}\g(x^\star)
+\nabla \ff(y^t)%-\nabla \ff(x^\star)
\big)$. Therefore,
\begin{align*}
\|z^{t+1}-x^\star\|^2&=\|z^{t}-x^\star\|^2-\|z^{t+1}-z^t\|^2+2\left\langle z^{t+1}-z^t,z^{t+1}-x^\star\right\rangle\\
&=\|z^{t}-x^\star\|^2-\|z^{t+1}-z^t\|^2-2a_{t+1}\gamma \big\langle \tilde{\nabla}\g(z^{t+1})+\nabla \ff(y^t),z^{t+1}-x^\star\big\rangle.
\end{align*}
Dividing by $2\gamma$ on both sides and invoking Lemma~\ref{lemman1}, we obtain the inequality \eqref{eqin1}.
\end{proof}

We can now focus on the choice of  the sequence $(a_t)_{t\geq 0}$.
\begin{corollary}\label{cor13}
In the conditions of Theorem~\ref{theo1}, suppose that  $\gamma\leq \frac{1}{L_\ff}$, $a_0=0$, and $a_{t+1}^2-a_{t+1}\leq a_t^2$ (equivalently,
$a_{t+1} \leq \frac{1+\sqrt{1+4a_t^2}}{2}$)
for every $t\geq 0$ (this implies $a_1=1$). Then
\begin{equation*}
\mathcal{E}^{t}_{x^\star} \leq \mathcal{E}^0_{x^\star}=\frac{1}{2\gamma}\|x^0-x^\star\|^2 ,\quad\forall t\geq 0,
\end{equation*}
so that
\begin{equation*}
\Psi(x^t)-\Psi^\star \leq \frac{\sqn{x^0-x^\star}}{2\gamma a_t^2} ,\quad\forall t\geq 1.
\end{equation*}
In particular, by choosing $\gamma=\frac{1}{L_\ff}$ and $a_t=\frac{t+1}{2}$ (or the slightly larger sequence defined recursively by $a_{t+1}\coloneqq \frac{1+\sqrt{1+4a_t^2}}{2}$) for every $t\geq 1$, which satisfies the condition above, \algn{APGD} has accelerated sublinear convergence with
\begin{equation}
\Psi(x^t) -\Psi^\star\leq 
\frac{2 L_\ff \sqn{x^0-x^\star}}{(t+1)^2}.
\label{eqratf}
\end{equation}
Moreover, suppose that $\mu_\g>0$, let $T\geq 1$, choose $\gamma=\frac{1}{L_\ff}$, and suppose that $a_t= a_\sharp \coloneqq \max\left(\sqrt{\frac{L_\ff}{\mu_\g}},1\right)$ for every $t\geq T$. Then $x^\star=\argmin \Psi$ is unique and 
\algn{APGD} has accelerated linear convergence with
\begin{equation}
\mathcal{E}^{t}_{x^\star} \leq \left(\frac{1}{1+\sqrt{\mu_\g/L_\ff}}\right)^{t-T}\mathcal{E}^T_{x^\star} ,\quad\forall t\geq T,\label{eqcorc1}
\end{equation}
hence an iteration complexity (to reach an accuracy $\mathcal{E}^t_{x^\star}\leq \epsilon$) $\mathcal{O}\left(\sqrt{\frac{L_\ff}{\mu_\g}}\log \epsilon^{-1} \right)$.
\end{corollary}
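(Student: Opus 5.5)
Everything follows from Theorem~\ref{theo1}, applied with $\gamma\leq\frac{1}{L_\ff}$ so that the term in $\|z^{t+1}-z^t\|^2$ can be dropped. I would treat the sublinear and linear regimes separately.

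\emph{Sublinear part.} Start directly from \eqref{eqin1}. Because $\mu_\ff\geq 0$, the coefficient $\frac{1-\gamma\mu_\ff}{2\gamma}$ is at most $\frac{1}{2\gamma}$. Also, $1+a_t\gamma\mu_\g\geq 1$, so $\frac{1-\gamma\mu_\ff}{2\gamma}\|z^t-x^\star\|^2\leq \frac{1+a_t\gamma\mu_\g}{2\gamma}\|z^t-x^\star\|^2$. For the objective term, the hypothesis $a_{t+1}^2-a_{t+1}\leq a_t^2$ together with $\Psi(x^t)-\Psi^\star\geq 0$ gives $(a_{t+1}^2-a_{t+1})(\Psi(x^t)-\Psi^\star)\leq a_t^2(\Psi(x^t)-\Psi^\star)$. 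Combining these yields $\mathcal{E}^{t+1}_{x^\star}\leq\mathcal{E}^t_{x^\star}$.

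I would work from \eqref{eqin1} rather than the max-form bound, because the latter divides by $a_t^2$, which vanishes at $t=0$. At $t=0$ we have $a_0=0$, so $\mathcal{E}^0_{x^\star}=\frac{1}{2\gamma}\|z^0-x^\star\|^2=\frac{1}{2\gamma}\|x^0-x^\star\|^2$. The condition $a_1^2-a_1\leq 0$ combined with $a_1\geq 1$ forces $a_1=1$. Induction then gives $\mathcal{E}^t_{x^\star}\leq\mathcal{E}^0_{x^\star}$, and dropping the nonnegative distance term gives $a_t^2(\Psi(x^t)-\Psi^\star)\leq\frac{1}{2\gamma}\|x^0-x^\star\|^2$.

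For the specific sequence $a_t=\frac{t+1}{2}$ with $t\geq1$, I need to check $a_{t+1}^2-a_{t+1}\leq a_t^2$. This reads $\frac{(t+2)^2}{4}-\frac{t+2}{2}=\frac{t^2+2t}{4}\leq\frac{(t+1)^2}{4}$, which holds. The step $a_0=0\to a_1=1$ is also fine. The recursive sequence satisfies the condition with equality, and by induction it dominates $\frac{t+1}{2}$. Plugging $\gamma=1/L_\ff$ into the bound gives \eqref{eqratf}.

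\emph{Linear part.} Uniqueness of $x^\star$ follows because $\mu_\g>0$ makes $\Psi$ strongly convex. For $t\geq T$ we have $a_t=a_{t+1}=a_\sharp$, so the contraction factor from Theorem~\ref{theo1} is
\[
\max\left(\frac{1-\gamma\mu_\ff}{1+a_\sharp\gamma\mu_\g},\ 1-\frac{1}{a_\sharp}\right),
\]
with $\gamma=1/L_\ff$. I need to show each entry is at most $\frac{1}{1+\sqrt{\mu_\g/L_\ff}}$.

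\textbf{First entry.} It is at most $\frac{1}{1+a_\sharp\mu_\g/L_\ff}$. Since $a_\sharp\geq\sqrt{L_\ff/\mu_\g}$, we get $a_\sharp\mu_\g/L_\ff\geq\sqrt{\mu_\g/L_\ff}$, which gives the bound.

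\textbf{Second entry.} This is the only mildly delicate step, with two cases.
\begin{itemize}
\item If $a_\sharp=1$, the entry equals $0$.
\item Otherwise $a_\sharp=\sqrt{L_\ff/\mu_\g}=1/s$ with $s=\sqrt{\mu_\g/L_\ff}<1$. The entry is $1-s$, and $1-s\leq\frac{1}{1+s}$ is equivalent to $1-s^2\leq1$, which holds.
\end{itemize}

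Iterating the contraction from $T$ gives \eqref{eqcorc1}. The complexity estimate follows from $\log(1+s)\geq s/2$ for $s\leq1$, so that $\mathcal{O}(s^{-1}\log\epsilon^{-1})$ iterations suffice.
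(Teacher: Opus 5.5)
Your proof is correct and follows the route the paper intends: the paper gives no separate proof of this corollary and treats it as a direct consequence of Theorem~\ref{theo1}, using monotonicity of $\mathcal{E}^t_{x^\star}$ in the sublinear regime and the max-form contraction with $a_t=a_{t+1}=a_\sharp$ in the linear regime. Working from \eqref{eqin1} at $t=0$, rather than from the max-form bound, is a sensible extra bit of care, since that bound involves $0/0$ when $a_0=0$.
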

These asymptotic rates are known to be optimal for first-order methods \citep{nes04}. 
 The constant 2 in \eqref{eqratf} can be removed with the refined and optimal method \algn{OptISTA} \citep{jan25}.
Fine properties of the sequence $(y_t)_{t\geq 0}$ in \algn{APGD} have been studied in \citet{wu26}.

\begin{remark}
In practice, we can choose $\gamma=\frac{1}{L_\ff}$, $a_0=0$ and  $
a_{t}= \min\left( \frac{t+1}{2}, a_\sharp \right)$ (or $a_{t}=\min\Big( \frac{1+\sqrt{1+4a_{t-1}^2}}{2}, a_\sharp \Big))$
for every $t\geq 1$. This way, if $\mu_\g>0$, there is a first phase of about $2a_\sharp$ iterations where sublinear convergence of $\Psi(x^t)$ is actually faster than linear convergence, before linear convergence takes over.
\end{remark}
\begin{remark}\label{rem15}
In the conditions of Corollary~\ref{cor13}, if $\mu_\g>0$ and $L_\ff\geq \mu_\g$, we have linear convergence of $L_\ff\sqn{z^t-x^\star}$ and $a_\sharp^2\big(\Psi(x^t)-\Psi^\star\big)=\frac{L_\ff}{\mu_\g} \big(\Psi(x^t)-\Psi^\star\big)$ with same rate. Since $\Psi$ is $\mu_\g$-strongly convex, we have $\Psi(x^t)-\Psi^\star\geq \frac{\mu_\g}{2}\sqn{x^t-x^\star}$, so that $L_\ff\sqn{x^t-x^\star}$ converges linearly with same rate as well. In the general convex case, $\Psi(x^t)\rightarrow \Psi^\star$, but there is no guarantee that $\sqn{z^t-x^\star}$ decreases. Thus, $x^t$ is indeed the variable of interest, not $z^t$.
\end{remark}
\begin{remark}
We observe in Theorem~\ref{theo1} that if $\mu_\g=0$ and $\mu_\ff>0$, \algn{APGD} converges linearly with rate at best $1-\gamma\mu_\ff$, which is not accelerated. However, it is known that accelerated convergence can be obtained in this case. This is essentially achieved by transferring the strong convexity from $\ff$ to $\g$ by writing $\ff+\g = \tilde{\ff}+\tilde{\g}$, with $\tilde{\ff}=\ff-\frac{\mu_\ff}{2}\|\cdot\|^2$ and $\tilde{\g}=\g+\frac{\mu_\ff}{2}\|\cdot\|^2$, and doing the corresponding changes in the algorithm.
\end{remark}
\begin{remark}
When $\mu_\g>0$, we can get an improvement of $\sqrt{2}$ in the linear convergence rate by slightly modifying the analysis. For this, given $x^\star$, we define $\tilde{\g}=\g-\frac{\mu_\g}{2}\|\cdot-x^\star\|^2$. Then, in the proof of Theorem 1, we have
\begin{align*}
\|z^{t+1}-x^\star\|^2&=\|z^{t}-x^\star\|^2-\|z^{t+1}-z^t\|^2-2a_{t+1}\gamma \mu_\g \sqn{z^{t+1}-x^\star}\\
&\quad -2a_{t+1}\gamma \big\langle \tilde{\nabla}\tilde{\g}(z^{t+1})+\nabla \ff(y^t)
,z^{t+1}-x^\star\big\rangle.
\end{align*}
Then, invoking Lemma~\ref{lemman1} with $\g$ replaced by $\tilde{\g}$, we obtain, if $\gamma\leq \frac{1}{L_\ff}$,
\begin{align*}
\frac{1\!+\!2a_{t+1}\gamma\mu_\g}{2\gamma}\|z^{t+1}-x^\star\|^2+a_{t+1}^2  \big(\widetilde{\Psi}(\xf^{t+1})-\Psi^\star\big) 
&\leq\frac{1\!-\!\gamma\mu_\ff}{2\gamma}\|z^{t}-x^\star\|^2+(a_{t+1}^2\!-\!a_{t+1}) \big(\widetilde{\Psi}(x^{t})-\Psi^\star\big), 
\end{align*}
where $\widetilde{\Psi}=\ff+\tilde{\g}=\Psi -\frac{\mu_\g}{2}\|\cdot-x^\star\|^2$. Note the factor 2 in  $1+2a_{t+1}\gamma\mu_\g$, in comparison with  $1+a_{t+1}\gamma\mu_\g$ in \eqref{eqin1}. 
Therefore, in Corollary \ref{cor13}, we can choose $a_\sharp \coloneqq \max\Big(1,\sqrt{\frac{L_\ff}{2\mu_\g}}\Big)$ and we get the contraction factor $\frac{1}{1+\sqrt{2\mu_\g/L_\ff}}$ 
on the modified Lyapunov function. Asymptotically, this is the best known rate for strongly convex composite problems \citep{ush25}.
\end{remark}\medskip

\subsection{Point Convergence of \algn{APGD}}

We complement the analysis of \algn{APGD} by establishing its point convergence. 
Historically, proving  convergence of the iterates for accelerated methods has been a notoriously difficult open problem. 
In a decisive paper, \citet{dos15} proved that if one chooses the momentum parameter in \algn{FISTA} as $a_t = \frac{t+\beta-1}{\beta}$ for some $\beta>2$, 
the sequence $(x^t)_{t\geq 0}$ converges weakly to a solution $x^\star\in\argmin \Psi$. However, the convergence of \algn{FISTA}, and even its special case \algn{AGD}, in the critical case $a_t = \frac{t+1}{2}$ remained unresolved for another decade. Recently, in late 2025, a breakthrough was achieved simultaneously by two independent teams: 
\citet{jan252} proved the convergence of \algn{AGD} (assuming $\mathcal{X}$ is finite-dimensional), while \citet{bot252} established the weak convergence of \algn{FISTA},
under the general conditions $a_{t+1}^2-a_{t+1}\leq a_t^2$ and $a_t \rightarrow \infty$. With minor adjustments to the proof technique from \citet{jan252} to accommodate infinite-dimensional Hilbert spaces, we demonstrate that this fundamental result also applies to 
 \algn{APGD}. 

\begin{theorem}\label{theo1wc}
In \algn{APGD}, suppose that $\gamma\leq \frac{1}{L_\ff}$, $a_0=0$, $a_{t+1}^2-a_{t+1}\leq a_t^2$ for every $t\geq 0$, and $a_t \rightarrow \infty$.
Then $(z^t)_{t\geq 0}$ is bounded, and there exists $x^\star\in\argmin \Psi$ such that $(x^t)_{t\geq 0}$ and $(y^t)_{t\geq 0}$ both converge weakly to $x^\star$. 
\end{theorem}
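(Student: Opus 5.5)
The plan is to combine the Lyapunov monotonicity of Theorem~\ref{theo1} with an Opial-type argument, in the spirit of \citet{jan252}. The key point is that the \emph{differences} of Lyapunov functions associated with two different minimizers carry linear information about $z^t$. First, under the assumptions ($\gamma\leq\frac{1}{L_\ff}$, $a_0=0$, $a_{t+1}^2-a_{t+1}\leq a_t^2$), Corollary~\ref{cor13} gives $\mathcal{E}^{t+1}_{x^\star}\leq\mathcal{E}^t_{x^\star}\leq\frac{1}{2\gamma}\sqn{x^0-x^\star}$ for every $x^\star\in\argmin\Psi$. This holds because, for $t\geq1$, the factor $\max\big(\frac{1-\gamma\mu_\ff}{1+a_t\gamma\mu_\g},\frac{a_{t+1}^2-a_{t+1}}{a_t^2}\big)$ is at most $1$. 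Hence $(z^t)$ is bounded, and $\mathcal{E}^t_{x^\star}$ converges for each $x^\star$. Since $x^t$ is a convex combination of $z^0,\dots,z^t$, the sequence $(x^t)$ is bounded too. Moreover, $\Psi(x^t)-\Psi^\star\leq \frac{\sqn{x^0-x^\star}}{2\gamma a_t^2}\to0$ because $a_t\to\infty$. As $\Psi$ is convex and lower semicontinuous, hence weakly lower semicontinuous, every weak cluster point of $(x^t)$ lies in $\argmin\Psi$.

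If $\mu_\g>0$, the argument is immediate. The minimizer is unique, and $\Psi(x^t)-\Psi^\star\geq\frac{\mu_\g}{2}\sqn{x^t-x^\star}$, so $x^t\to x^\star$ strongly. So I will assume $\mu_\g=0$. Take two minimizers $p,q$. Since $\Psi(p)=\Psi(q)=\Psi^\star$, the function-value terms cancel and
\begin{equation*}
\mathcal{E}^t_{p}-\mathcal{E}^t_{q}=\frac{1}{2\gamma}\big(\sqn{p}-\sqn{q}\big)+\frac{1}{\gamma}\langle z^t,q-p\rangle .
\end{equation*}
Both Lyapunov sequences converge, so $\langle z^t,q-p\rangle$ converges to some limit $\ell$.

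Next, I transfer this convergence from $z^t$ to $x^t$. The scalar sequence $s_t\coloneqq\langle x^t,q-p\rangle$ satisfies $s_{t+1}=(1-\frac{1}{a_{t+1}})s_t+\frac{1}{a_{t+1}}\langle z^{t+1},q-p\rangle$. This is a relaxed averaging of a convergent sequence. A standard Toeplitz-type lemma gives $s_t\to\ell$, provided $\sum_t 1/a_{t}=\infty$ (so that $\prod_k(1-\frac{1}{a_k})\to0$ and old terms are forgotten). This condition follows from the recursion: $a_{t+1}^2-a_{t+1}\leq a_t^2$ implies $a_{t+1}\leq a_t+1$, so $a_t\leq t$ and the harmonic series diverges. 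The only subtlety is the first step $a_1=1$, which is harmless. I expect this averaging step to be the main technical point to write carefully, via an $\varepsilon$-splitting of the tail. The rest is mostly bookkeeping.

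Finally, I conclude with Opial's argument. Let $p$ and $q$ be two weak cluster points of the bounded sequence $(x^t)$; both are minimizers by the first paragraph. Evaluating the limit of $\langle x^t,q-p\rangle$ along the two subsequences gives $\langle p,q-p\rangle=\langle q,q-p\rangle$, hence $\sqn{q-p}=0$. So $(x^t)$ has a unique weak cluster point $x^\star\in\argmin\Psi$, and by boundedness $x^t\rightharpoonup x^\star$. For $(y^t)$, note that $y^t-x^t=\frac{1}{a_{t+1}}(z^t-x^t)$, which tends to $0$ strongly because $(z^t)$ and $(x^t)$ are bounded and $a_{t+1}\to\infty$. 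Therefore $y^t\rightharpoonup x^\star$ as well.
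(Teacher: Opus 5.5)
Your proposal is correct and follows essentially the same route as the paper: your relaxed-averaging recursion for $s_t=\langle x^t,q-p\rangle$ is, up to an affine change of variables, exactly the paper's identity $\mathcal{B}^{t+1}=\mathcal{A}^{t+1}+(a_{t+1}-1)(\mathcal{A}^{t+1}-\mathcal{A}^{t})$, and your direct Toeplitz-type argument (using $a_t\leq t$, hence $\sum_t 1/a_t=\infty$) replaces the paper's citation of Lemma A.4 in \citet{bot25}. The only other difference is your separate treatment of $\mu_\g>0$, which is harmless but unnecessary, since the paper simply works with the Lyapunov function taken with $\mu_\ff=\mu_\g=0$, which is always admissible.
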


\begin{proof}
For every $x^\star \in \argmin \Psi$ and $t\geq 0$, we define the Lyapunov function $\mathcal{E}^t_{x^\star}$ as in Theorem \ref{theo1}, with $\mu_\ff=\mu_\g=0$; 
then from Corollary~\ref{cor13}, we have $\frac{1}{2\gamma}\sqn{z^t -x^\star} \leq \mathcal{E}^t_{x^\star}\leq \mathcal{E}^0_{x^\star}$. Therefore, the sequence $(z^t)_{t\geq 0}$ is bounded and there exists $M>0$ such that $\|z^t\|\leq M$ for every $t\geq 0$. Consequently, it follows from \eqref{eqala1p3} that $(x^t)_{t\geq 0}$ is also bounded, because $x^0=z^0$ and $\|x^{t+1}\|\leq \left(1-\frac{1}{a_{t+1}}\right) \|x^{t}\|+\frac{1}{a_{t+1}} \|z^{t+1}\|\leq \max(\|x^t\|,M)\leq \cdots \leq \max(\|x^0\|,M)=M$. Therefore, $(x^t)_{t\geq 0}$ has at least one weak cluster point. Moreover, since $a_t \rightarrow \infty$ and $a_{t}^2 \big(\Psi(x^{t})-\Psi^\star\big)\leq \mathcal{E}^0$, we have $\Psi(x^{t})\rightarrow \Psi^\star$. Since $\Psi$ is convex and lower semicontinuous, it is weakly sequentially lower semicontinuous; consequently,  every weak cluster point $x^\infty$ of $(x^t)_{t\geq 0}$ satisfies $\Psi(x^{\infty})\leq  \liminf_{t \to \infty} \Psi(x^t) =\Psi^\star$, and hence is a minimizer of $\Psi$. 

Let $\hat{x}^\star \in \argmin \Psi$ and $\check{x}^\star \in \argmin \Psi$ be two weak cluster points of $(x^t)_{t\geq 0}$. We want to show that $\hat{x}^\star=\check{x}^\star$. %Furthermore, 
To that aim, we define, for every $t\geq 0$,
$
\mathcal{A}^t\coloneqq\sqn{x^t-\hat{x}^\star}-\sqn{x^t-\check{x}^{\star}}$ and $\mathcal{B}^t\coloneqq 2\gamma\big(\mathcal{E}^t_{\hat{x}^\star}-\mathcal{E}^t_{\check{x}^\star}\big)=\sqn{z^t-\hat{x}^\star}-\sqn{z^t-\check{x}^{\star}}$. 
Since the sequences $(\mathcal{E}^t_{\hat{x}^\star})_{t\geq 0}$ and $(\mathcal{E}^t_{\check{x}^\star})_{t\geq 0}$ are nonnegative and nonincreasing, they both converge, so that $(\mathcal{B}^t)_{t\geq 0}$ converges to some limit $\mathcal{B}^\infty \in \mathbb{R}$. 
Moreover, for every $t\geq 0$, we can expand the norms to obtain
\begin{align*}
\mathcal{B}^{t+1} & =-2\big\langle z^{t+1}, \hat{x}^{\star}-\check{x}^{\star}\big\rangle+\sqn{\hat{x}^{\star}}-\sqn{\check{x}^{\star}}, \\
\mathcal{A}^{t+1} & =-2\big\langle x^{t+1}, \hat{x}^{\star}-\check{x}^{\star}\big\rangle+\sqn{\hat{x}^{\star}}-\sqn{\check{x}^{\star}}, \\
\mathcal{A}^{t} & =-2\big\langle x^t, \hat{x}^{\star}-\check{x}^{\star}\big\rangle+\sqn{\hat{x}^{\star}}-\sqn{\check{x}^{\star}}.
\end{align*}
Using \eqref{eqala1p3} again, we get $z^{t+1}=x^{t+1} + (a_{t+1}-1) (x^{t+1}-x^t)$, so that $\mathcal{B}^{t+1}=\mathcal{A}^{t+1} + (a_{t+1}-1) (\mathcal{A}^{t+1}-\mathcal{A}^{t})$. Furthermore, for every $t\geq 0$ the condition $a_{t+1}^2-a_{t+1}\leq a_t^2$ implies $a_{t+1} \leq \frac{1+\sqrt{1+4a_t^2}}{2}\leq  \frac{1+\sqrt{1}+\sqrt{4a_t^2}}{2}=a_t+1$ and, by recursion, $a_t\leq t$. Since $a_t\rightarrow \infty$, there exists $T\geq 2$ such that $a_t>1$ for every $t\geq T$, and we have $\sum_{t= T}^\infty\frac{1}{a_t-1}\geq \sum_{t= T}^\infty\frac{1}{t-1}=\infty$. The conditions are therefore met to invoke Lemma A.4 in \citet{bot25} (see another proof of this result in \citet{bau26}), and we conclude that $\mathcal{A}^{t}\rightarrow  \mathcal{B}^\infty$. Finally, let $(x^{t_j})_{j\geq 0}$ and $(x^{t'_j})_{j\geq 0}$ be two subsequences of $(x^t)_{t\geq 0}$ converging weakly to $\hat{x}^\star$ and $\check{x}^\star$, respectively. Then $\mathcal{A}^{t_j}\rightarrow -\sqn{\hat{x}^\star-\check{x}^{\star}}$ and $\mathcal{A}^{t'_j}\rightarrow \sqn{\check{x}^\star-\hat{x}^{\star}}$. 
Because the entire sequence $(\mathcal{A}^t)_{t \ge 0}$ converges to a single limit, these subsequential limits must coincide. Hence, $ \mathcal{B}^\infty = -\sqn{\hat{x}^\star-\check{x}^{\star}}=\sqn{\hat{x}^\star-\check{x}^{\star}}$, which implies 
$\hat{x}^\star=\check{x}^{\star}$.
It follows that $(x^t)_{t\geq 0}$ has exactly one weak cluster point $x^\star$, and hence converges weakly to $x^\star$.

Using \eqref{eqala1}, we have $\|y^t-x^t\|=\frac{1}{a_{t+1}}\|x^t-z^t\|\rightarrow 0$, because $a_t\rightarrow \infty$ and $(x^t-z^t)_{t\geq 0}$ is bounded. Hence, $(y^t)_{t\geq 0}$ also converges weakly to $x^\star$.
\end{proof}

\subsection{Proof of Lemma~\ref{lemman1}}\label{secprooflem1}

Let $x^\star\in\argmin \Psi$ 
and $t\geq 0$. We define the Bregman distances 
\begin{align*}
&D_\ff: x\in\mathcal{X}\mapsto \ff(x)-\ff(x^\star)-\langle x-x^\star,\nabla \ff(x^\star)\rangle \geq 0,\\
&D_\g: x\in\mathcal{X}\mapsto \g(x)-\g(x^\star)-\langle x-x^\star,\tilde{\nabla} \g(x^\star)\rangle \geq 0, 
\end{align*}
where  $\tilde{\nabla}\g(x^\star)\coloneqq -\nabla \ff(x^\star)$. We want to express
\begin{align*}
\big\langle \tilde{\nabla}\g(z^{t+1})+\nabla \ff(y^t),z^{t+1}-x^\star\big\rangle&=\big\langle \tilde{\nabla}\g(z^{t+1})- \tilde{\nabla}\g(x^\star),z^{t+1}-x^\star\big\rangle\\
&+\big\langle \nabla \ff(y^t)-\nabla \ff(x^\star),z^{t+1}-x^\star\big\rangle
\end{align*}
using $D_\ff$ and $D_\g$. 

\textbf{Step 1: Bounding the nonsmooth term.} By $\mu_\g$-strong convexity of $\g$, we have $\g(x^\star)\geq \g(z^{t+1})+ \langle \tilde{\nabla}\g(z^{t+1}),x^\star-z^{t+1}\rangle + \frac{\mu_\g}{2}\sqn{z^{t+1}-x^\star}$, so that 
$D_\g(z^{t+1}) \leq \langle \tilde{\nabla}\g(z^{t+1})-\tilde{\nabla} \g(x^\star),z^{t+1}-x^\star\rangle - \frac{\mu_\g}{2}\sqn{z^{t+1}-x^\star}$, and
 \begin{align}
&-a_{t+1} \big\langle \tilde{\nabla}\g(z^{t+1})-\tilde{\nabla}\g(x^\star),z^{t+1}-x^\star\big\rangle\notag\\
&\leq -a_{t+1} D_\g(z^{t+1})-\frac{a_{t+1}\mu_\g}{2}\|z^{t+1}-x^\star\|^2\notag\\
& \leq -a_{t+1}^2 D_\g(x^{t+1})+(a_{t+1}^2-a_{t+1})D_\g(x^{t})-\frac{a_{t+1}\mu_\g}{2}\|z^{t+1}-x^\star\|^2.\label{eqlem1k1}
\end{align}
The last inequality follows from the convexity of $D_\g$: we have $D_\g(x^{t+1}) \leq \big(1-\frac{1}{a_{t+1}}\big) D_\g(x^{t})+\frac{1}{a_{t+1}} D_\g(z^{t+1})$, so that $D_\g(z^{t+1})\geq a_{t+1} D_\g(x^{t+1})-(a_{t+1}-1)D_\g(x^{t})$.

\textbf{Step 2: Expanding the gradient term.} We expand $\big\langle \nabla \ff(y^t)-\nabla \ff(x^\star),z^{t+1}-x^\star\big\rangle$
into inner products of the form $\big\langle \nabla \ff(y^t)-\nabla \ff(x^\star),y^t-X\big\rangle$, where $X$ is $x^\star$, $x^t$, or $x^{t+1}$, and subsequently use convexity and smoothness at $y^t$ to express them using $D_\ff(y^t)$, $D_\ff(x^t)$, and $D_\ff(x^{t+1})$.
From $y^t=\frac{1}{a_{t+1}} z^t+(1-\frac{1}{a_{t+1}}) \xf^t$, we obtain 
$z^t-x^\star = (y^t-x^\star)+(a_{t+1}-1)(y^t - \xf^t)$. 
Moreover,  $\xf^{t+1}-y^t=\frac{1}{a_{t+1}}(z^{t+1}-z^t)$. Hence,
\begin{align}
\left\langle \nabla \ff(y^t)-\nabla \ff(x^\star),z^{t+1}-x^\star\right\rangle&=
\left\langle \nabla \ff(y^t)-\nabla \ff(x^\star),z^{t+1}-z^t\right\rangle\notag\\
&\quad+\left\langle \nabla \ff(y^t)-\nabla \ff(x^\star),z^t-x^\star\right\rangle\notag\\
&=-a_{t+1}\left\langle \nabla \ff(y^t)-\nabla \ff(x^\star),y^t-x^{t+1}\right\rangle\notag\\
&\quad+\left\langle \nabla \ff(y^t)-\nabla \ff(x^\star),y^t-x^\star\right\rangle\notag\\
&\quad+(a_{t+1}-1)\left\langle \nabla \ff(y^t)-\nabla \ff(x^\star),y^t-x^t\right\rangle.\label{eqwy1}
\end{align}

\textbf{Step 2-a: Using smoothness at $y^t$ and $x^{t+1}$.} Since $\ff$ is $L_\ff$-smooth, we have
\begin{align*}
\ff( \xf^{t+1}) -\ff(x^\star)&\leq \ff(y^t)-f(x^\star)+\langle \nabla \ff(y^t), \xf^{t+1}-y^t\rangle +{\textstyle \frac{L_\ff}{2}}\|\xf^{t+1}-y^t\|^2\\
&= \ff(y^t)-\ff(x^\star)+\langle \nabla \ff(y^t)-\nabla \ff(x^\star), \xf^{t+1}-y^t\rangle\\
&\quad + \langle \nabla \ff(x^\star), \xf^{t+1}-x^\star\rangle - \langle \nabla \ff(x^\star), y^t-x^\star\rangle+ {\textstyle\frac{L_\ff}{2}}\|\xf^{t+1}-y^t\|^2,
\end{align*}
so that $D_\ff(\xf^{t+1}) -D_\ff(y^t)- \frac{L_\ff}{2}\|\xf^{t+1}-y^t\|^2\leq \langle \nabla \ff(y^t)-\nabla \ff(x^\star), \xf^{t+1}-y^t\rangle$. 

\textbf{Step 2-b: Using strong convexity and smoothness at $y^t$ and $x^{t}$.} Since $\ff$ is $\mu_\ff$-strongly convex and $L_\ff$-smooth, we have \citep[Theorem 4]{tay17}
\begin{align*}
\ff(x^t)  -\ff(x^\star)&\geq \ff(y^t) -\ff(x^\star) + \langle \nabla \ff(y^t), x^t - y^t \rangle + {\textstyle\frac{\mu_\ff}{2} \sqn{x^t - y^t} }\\
&\quad+ {\textstyle\frac{1}{2(L_\ff-\mu_\ff)} \sqn{\nabla \ff(x^t) - \nabla \ff(y^t) - \mu_\ff(x^t - y^t)} }\\
&= \ff(y^t)-\ff(x^\star)+\langle \nabla \ff(y^t)-\nabla \ff(x^\star), \xf^{t}-y^t\rangle\\
&\quad + \langle \nabla \ff(x^\star), \xf^{t}-x^\star\rangle - \langle \nabla \ff(x^\star), y^t-x^\star\rangle+{\textstyle\frac{\mu_\ff}{2}\sqn{y^t-x^t}}\\
&\quad+ {\textstyle\frac{1}{2(L_\ff-\mu_\ff)} \sqn{\nabla \ff(x^t) - \nabla \ff(y^t) - \mu_\ff(x^t - y^t)} }
\end{align*}
(with the last term set to zero if $\mu_\ff=L_\ff)$, 
so that $\langle \nabla \ff(y^t)-\nabla \ff(x^\star), y^t-x^{t}\rangle \geq -D_\ff(x^{t}) +D_\ff(y^t)+\frac{\mu_\ff}{2}\sqn{y^t-x^t}+\frac{1}{2(L_\ff-\mu_\ff)} \|\nabla \ff(x^t) - \nabla \ff(y^t) - \mu_\ff(x^t - y^t)\|^2$.  

\textbf{Step 2-c: Using strong convexity and smoothness at $y^t$ and $x^\star$.} Using the same derivations as in Step 2-b, with $x^t$ replaced by $x^\star$, we obtain
$\left\langle \nabla \ff(y^t)-\nabla \ff(x^\star),y^{t}-x^\star\right\rangle\geq D_\ff(y^t)+\frac{\mu_\ff}{2}\sqn{y^t-x^\star}+\frac{1}{2(L_\ff-\mu_\ff)} \|\nabla \ff(y^t) - \nabla \ff(x^\star) - \mu_\ff(y^t-x^\star)\|^2$.\medskip

Therefore, using the assumption $a_{t+1}\geq 1$, so that $a_{t+1}-1\geq 0$ in \eqref{eqwy1}, we have
\begin{align*}
-a_{t+1}\left\langle \nabla \ff(y^t)-\nabla \ff(x^\star),z^{t+1}-x^\star\right\rangle
&\leq a_{t+1}^2\left(-D_\ff(\xf^{t+1}) +D_\ff(y^t)+{\textstyle\frac{L_\ff}{2}}\|\xf^{t+1}-y^t\|^2\right)\\
&\quad-a_{t+1}\left(D_\ff(y^t)+{\textstyle\frac{\mu_\ff}{2}\|y^t -x^\star\|^2}\right.\\
&\quad\left.\textstyle{+\frac{1}{2(L_\ff-\mu_\ff)} \|\nabla \ff(y^t) - \nabla \ff(x^\star) - \mu_\ff(y^t-x^\star)\|^2}\right)\\
&\quad-(a_{t+1}^2-a_{t+1})\left(-D_\ff(\xf^{t}) +D_\ff(y^t)+{\textstyle\frac{\mu_\ff}{2}}\|y^t-x^t\|^2\right.\\
&\quad+\left. {\textstyle\frac{1}{2(L_\ff-\mu_\ff)} \sqn{\nabla \ff(x^t) - \nabla \ff(y^t) - \mu_\ff(x^t - y^t)} }\right)\\
&=-a_{t+1}^2 D_\ff(\xf^{t+1})+(a_{t+1}^2-a_{t+1})D_\ff(\xf^{t})+{\textstyle\frac{L_\ff}{2}}\|z^{t+1}-z^t\|^2\\
&\quad-a_{t+1}{\textstyle\frac{\mu_\ff}{2}}\|y^t -x^\star\|^2-(a_{t+1}^2-a_{t+1}){\textstyle\frac{\mu_\ff}{2}}\|y^t-x^t\|^2\\
&\quad  -{\textstyle\frac{a_{t+1}}{2(L_\ff-\mu_\ff)} \sqn{\nabla \ff(y^t) - \nabla \ff(x^\star) - \mu_\ff(y^t - x^\star)} }\\
&\quad -{\textstyle\frac{a_{t+1}^2-a_{t+1}}{2(L_\ff-\mu_\ff)} \sqn{\nabla \ff(x^t) - \nabla \ff(y^t) - \mu_\ff(x^t - y^t)} }.
\end{align*}
Furthermore, by Jensen's inequality, from the relation 
$\frac{1}{a_{t+1}}(z^t-x^\star) = \frac{1}{a_{t+1}} (y^t-x^\star)+\frac{a_{t+1}-1}{a_{t+1}}(y^t - x^t)$, we have
$\sqn{\frac{1}{a_{t+1}}(z^t-x^\star)} \leq \frac{1}{a_{t+1}} \sqn{y^t-x^\star}+\frac{a_{t+1}-1}{a_{t+1}}\sqn{y^t - x^t}$, so that $\sqn{z^t-x^\star} \leq a_{t+1}\sqn{y^t-x^\star}+(a_{t+1}^2-a_{t+1})\sqn{y^t - x^t}$. Hence, 
\begin{align}
-a_{t+1}\left\langle \nabla \ff(y^t)-\nabla \ff(x^\star),z^{t+1}-x^\star\right\rangle&\leq -a_{t+1}^2 D_\ff(\xf^{t+1})+(a_{t+1}^2-a_{t+1})D_\ff(\xf^{t})\notag\\
&\quad+{\textstyle\frac{L_\ff}{2}}\|z^{t+1}-z^t\|^2-{\textstyle\frac{\mu_\ff}{2}}\|z^t -x^\star\|^2\notag\\
&\quad -{\textstyle\frac{a_{t+1}^2-a_{t+1}}{2(L_\ff-\mu_\ff)} \sqn{\nabla \ff(x^t) - \nabla \ff(y^t) - \mu_\ff(x^t - y^t)} }. \label{eqlem1k2} 
\end{align}
Combining \eqref{eqlem1k1} and \eqref{eqlem1k2}, and noting that $D_\ff+D_\g=\ff-\ff(x^\star)+\g-\g(x^\star)=\Psi-\Psi^\star$, we obtain the stated inequality.

\section{The  Accelerated Proximal Alternating Predictor--Corrector Algorithm (\algn{APAPC})}\label{sec2}

We now consider  Problem \eqref{eq1} under the specific assumption that $\g\coloneqq \frac{\mu_\g}{2}\|\cdot\|^2$: 
\begin{equation}
\minimize_{x\in\mathcal{X}} \;\Psi(x)\coloneqq \ff(x)+\frac{\mu_\g}{2}\|x\|^2+\h(\K x). \label{eq1c}
\end{equation}
For simplicity, we assume that $L_\ff\geq \mu_\g$; otherwise, acceleration is unnecessary. This problem is more general than Problem \eqref{eq1p}: setting $\mu_\g=0$ and $\K=\Id$ recovers the minimization of $\ff+\h$, as detailed in Remark \ref{remkid}. 
The corresponding dual problem is 
\begin{equation}
\minimize_{u\in\mathcal{U}} \;
\left(\ff+\frac{\mu_\g}{2}\|\cdot\|^2\right)^*(-\K^*u)+\hc(u). \label{eq1cd}
\end{equation}

\begin{algorithm}[t]
	\caption{Accelerated Proximal Alternating Predictor--Corrector algorithm (\algn{APAPC}) 
	}
	\label{alg2}
	\begin{algorithmic}[1]
		\STATE \textbf{input:}  primal stepsize $\gamma>0$, dual stepsize $\tau>0$, parameter sequence $(a_t)_{t\geq 0}$ with $a_t\geq 1$ $\forall t\geq 1$,  initial primal estimate $x^0=z^0\in\mathcal{X}$, initial dual estimate $u^0=v^0\in\mathcal{U}$		
		\FOR{$t=0,1,\ldots$}
	\STATE	$y^{t}  \coloneqq \left(1-\frac{1}{a_{t+1}}\right) x^{t}+\frac{1}{a_{t+1}} z^{t}$\label{eqapapcs1k}
\STATE $\hat{z}^t\coloneqq (1+a_{t+1}\gamma \mu_\g)^{-1}\left(z^t-a_{t+1}\gamma\nabla \ff(y^t)-a_{t+1}\gamma\K^* v^t\right)$\label{eqapapcc1k} 
\STATE $v^{t+1}\coloneqq \mathrm{prox}_{\frac{\tau}{a_{t+1}} \hc} \Big(v^t+{\textstyle\frac{\tau}{a_{t+1}}} \K \hat{z}^t\Big)$ \label{eqapapcc2k} %-b)\\
\STATE $z^{t+1}\coloneqq (1+a_{t+1}\gamma \mu_\g)^{-1}\left(z^t-a_{t+1}\gamma\nabla \ff(y^t)-a_{t+1}\gamma\K^* v^{t+1}\right)$ \label{eqapapcc3k} 
\STATE $x^{t+1}  \coloneqq \left(1-\frac{1}{a_{t+1}}\right) x^{t}+\frac{1}{a_{t+1}} z^{t+1}$ \label{eqapapcc4k}  \\
\STATE $u^{t+1}  \coloneqq \left(1-\frac{1}{a_{t+1}}\right) u^{t}+\frac{1}{a_{t+1}} v^{t+1} \quad$ (optional, used only in the analysis) \label{eqapapcc5k} 
		\ENDFOR
\end{algorithmic}
\end{algorithm}

To solve this, we introduce the \emph{Accelerated Proximal Alternating Predictor--Corrector} algorithm (\algn{APAPC}), given in Algorithm \ref{alg2}.
It proceeds as follows. The three core steps \eqref{eqapapcc1k}--\eqref{eqapapcc3k} mirror the \algn{PAPC} mechanism for updating $z^t$ and $v^t$, with specific stepsizes and the key distinction that $\nabla \ff$ is evaluated at $y^t$ rather than at $z^t$. The first step \eqref{eqapapcc1k} predicts the updated primal variable $\hat{z}^t$ using $(z^t,v^t)$, the second step \eqref{eqapapcc2k} performs  the dual update using $(\hat{z}^t,v^t)$, and the third step \eqref{eqapapcc3k} corrects the primal variable using the updated dual variable $v^{t+1}$. 
The final steps \eqref{eqapapcc4k}--\eqref{eqapapcc5k} form the primal and dual estimates $x^t$ and $u^t$ as weighted averages of the past iterates $z^k$ and $v^k$, $k=0,\ldots,t$. 
The intermediate point $y^t$ is also a weighted average of the past $z^k$, mirroring the momentum  architecture of \algn{APGD}. 
 
 When $a_{t}\equiv 1$ and $\mu_\g=0$, we have $x^t=y^t=z^t$ and $v^t=u^t$, so that \algn{APAPC} reduces exactly to \algn{PAPC} (see \eqref{eqpapc}), which justifies its name. 
 Adding the function $\g \coloneqq \frac{\mu_\g}{2}\|\cdot\|^2$, whose proximity operator is a simple scaling,  to Problem \eqref{eq1c}  does not alter the forward--backward structure of \algn{PAPC} \citep[Section 4.1]{con23}. However, this term, when positive, is crucial  
for achieving accelerated linear convergence, as demonstrated for \algn{APGD} in Section \ref{secne}.

Thus, \algn{APAPC} addresses the challenge of accelerating \algn{PAPC} by organically 
combining the decoupled momentum architecture of \algn{APGD} (exhibited in Section \ref{secne}) with the forward--backward splitting structure of \algn{PAPC}. The result is a fully split, single-loop algorithm that accommodates the composite term $\h\circ\K$ 
while fully preserving  optimal acceleration on the smooth component  $\ff$.

In the remainder of this section, we establish accelerated $\mathcal{O}(1/t^2)$ convergence of \algn{APAPC} in three settings where the dual problem is strongly convex. If, in addition, $\mu_\g>0$, then the primal problem is also strongly convex and \algn{APAPC} achieves accelerated linear convergence. We mention that the symmetric case where only the primal problem is strongly convex (i.e., $\mu_\g>0$ and $\h\circ\K$ is arbitrary) can be handled by disabling momentum and using a \emph{decreasing} parameter $a_t$ in Steps \eqref{eqapapcc1k}--\eqref{eqapapcc3k} \citep{con22,con25smpm}. We do not consider this case here, as our focus is on Nesterov acceleration of the smooth term, which relies on strong convexity of the dual problem.

We assume $a_t \geq 1$ for every $t\geq 1$ and $a_0 \geq 0$. 
We set $a_1=1$ in our convergence results, ensuring that the first iteration of \algn{APAPC} coincides with a \algn{PAPC} iteration. In particular, $x^1=z^1$ and $u^1=v^1$ belongs to the domain of $\hc$, as do all subsequent iterates $u^t$.

\subsection{General Lyapunov Analysis}

Following the approach used for \algn{APGD} in Theorem~\ref{theo1}, 
we first establish a Lyapunov inequality for 
\algn{APAPC}. In the purely primal setting of Section \ref{secne}, convergence rates were characterized by the primal gap $\Psi(x^t)-\Psi(x^\star)$. In the primal--dual setting, we instead consider the Lagrangian 
\begin{align*}
\mathcal{L}: (x,u)\in\mathcal{X}\times \mathcal{U} \mapsto (\ff+\g)(x)+\langle \K x,u\rangle - \hc(u).
\end{align*}
For every saddle-point solution $(x^\star,u^\star)$   to \eqref{eq2}, 
we define the Lagrangian gap as
\begin{align*}
\mathcal{G}_{x^\star,u^\star}\coloneqq 
\mathcal{L} (x,u^\star)-\mathcal{L} (x^\star,u)
\geq \frac{\mu_\g}{2}\sqn{x-x^\star}+\frac{\mu_\hc}{2}\sqn{u-u^\star},\quad \forall (x,u)\in\mathcal{X}\times \mathcal{U}.
\end{align*}
When $\h=0$, the dual variable can be fixed to zero, and this gap reduces to the primal objective gap: $\mathcal{L}(x,0)-\mathcal{L}(x^\star,0) = (\ff+\g)(x)-(\ff+\g)(x^\star) = \Psi(x)-\Psi(x^\star)$. Thus, the Lagrangian gap provides a natural extension of the primal objective gap to the primal--dual setting. However, the Lagrangian gap alone may not serve as a fully informative optimality measure in all problem instances, as the condition $\mathcal{G}_{x^\star,u^\star}(x,u)=0$ does not generally guarantee that $(x,u)$ is a primal--dual solution pair. This limitation underscores the importance of complementing the convergence rate analysis with a study of the iterate convergence, which we undertake in Section \ref{secpointa}.

To state our core single-iteration inequality for \algn{APAPC}, we define, for every $t\geq 0$, the subgradient $\tilde{\nabla}\hc(v^{t+1}) \in\partial\hc(v^{t+1})$ satisfying $v^{t+1}+\frac{\tau}{a_{t+1}}\tilde{\nabla}\hc(v^{t+1})=v^t+{\textstyle\frac{\tau}{a_{t+1}}} \K \hat{z}^t$.
 
\begin{theorem}[Single-iteration progress of \algn{APAPC}]\label{theo2}
Let $(x^\star,u^\star)$
 be a solution of \eqref{eq2}.  
Then, for every $t\geq 0$, the iterates of \algn{APAPC} satisfy
\begin{align}
&\frac{1+a_{t+1}\gamma\mu_\g}{2\gamma}\sqn{z^{t+1}-x^\star}+\frac{a_{t+1}^2+a_{t+1}\tau\mu_\hc}{2\tau}\sqn{v^{t+1}-u^{\star}}  -\frac{a_{t+1}^2\gamma}{2+2a_{t+1}\gamma \mu_\g}\sqn{\K^*(v^{t+1}-u^{\star})}\notag\\
&\quad +a_{t+1}^2 \mathcal{G}_{x^\star,u^\star}(x^{t+1},u^{t+1})
\notag\\
&\leq\frac{1}{2\gamma}\sqn{z^{t}-x^\star}+\frac{a_{t+1}^2}{2\tau}\sqn{v^{t}-u^{\star}}  -\frac{a_{t+1}^2\gamma}{2+2a_{t+1}\gamma \mu_\g}\sqn{\K^*(v^{t}-u^{\star})}\label{eqinea1}\\
&\quad -\left(\frac{1}{2\gamma}-\frac{L_\ff}{2}\right)\sqn{z^{t+1}-z^t } 
- \frac{\tau}{2}\sqn{ \K z^{t+1}-\tilde{\nabla}\hc(v^{t+1})}\notag\\
&\quad+(a_{t+1}^2-a_{t+1})\mathcal{G}_{x^\star,u^\star}(x^{t},u^{t}) -\frac{a_{t+1}}{2 L_\ff} \sqn{\nabla \ff(y^t) - \nabla \ff(x^\star) }
-\frac{a_{t+1}^2-a_{t+1}}{2 L_\ff} \sqn{\nabla \ff(x^t) - \nabla \ff(y^t) }
.\notag
\end{align}
\end{theorem}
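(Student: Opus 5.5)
We will mirror the proof of Theorem~\ref{theo1}, treating the primal and dual variables separately and then recombining them. Set $c_{t+1}\coloneqq 1+a_{t+1}\gamma\mu_\g$. Step~\eqref{eqapapcc3k} can be written as
\[
z^{t+1}-z^t=-a_{t+1}\gamma\big(\mu_\g z^{t+1}+\nabla\ff(y^t)+\K^*v^{t+1}\big).
\]
Expanding $\sqn{z^{t+1}-x^\star}$ as in the proof of Theorem~\ref{theo1} and dividing by $2\gamma$ gives
\begin{align*}
\tfrac{1}{2\gamma}\sqn{z^{t+1}-x^\star}&=\tfrac{1}{2\gamma}\sqn{z^t-x^\star}-\tfrac{1}{2\gamma}\sqn{z^{t+1}-z^t}\\
&\quad-a_{t+1}\big\langle \mu_\g z^{t+1}+\nabla \ff(y^t)+\K^*v^{t+1},\,z^{t+1}-x^\star\big\rangle .
\end{align*}
On the dual side, step~\eqref{eqapapcc2k} gives
\[
v^{t+1}-v^t=\tfrac{\tau}{a_{t+1}}\big(\K\hat z^t-\tilde\nabla\hc(v^{t+1})\big).
\]
Multiplying the analogous expansion of $\sqn{v^{t+1}-u^\star}$ by $\frac{a_{t+1}^2}{2\tau}$ produces the term $-\frac{a_{t+1}^2}{2\tau}\sqn{v^{t+1}-v^t}$ together with the cross term $a_{t+1}\langle \K\hat z^t-\tilde\nabla\hc(v^{t+1}),\,v^{t+1}-u^\star\rangle$.

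Next we add the optimality conditions~\eqref{eq2}, namely $\mu_\g x^\star+\nabla\ff(x^\star)+\K^*u^\star=0$ and $\K x^\star\in\partial\hc(u^\star)$, so that every cross term becomes a difference relative to $(x^\star,u^\star)$. The coupling terms $-a_{t+1}\langle \K^*(v^{t+1}-u^\star),z^{t+1}-x^\star\rangle$ and $a_{t+1}\langle \K(\hat z^t-x^\star),v^{t+1}-u^\star\rangle$ do not cancel; what remains is $a_{t+1}\langle \K(\hat z^t-z^{t+1}),v^{t+1}-u^\star\rangle$. Comparing steps~\eqref{eqapapcc1k} and~\eqref{eqapapcc3k} gives
\[
\hat z^t-z^{t+1}=\tfrac{a_{t+1}\gamma}{c_{t+1}}\K^*(v^{t+1}-v^t),
\]
so this leftover term equals $\frac{a_{t+1}^2\gamma}{c_{t+1}}\langle \K^*(v^{t+1}-v^t),\K^*(v^{t+1}-u^\star)\rangle$. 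By the three-point identity this is
\[
\tfrac{a_{t+1}^2\gamma}{2c_{t+1}}\Big(\sqn{\K^*(v^{t+1}-u^\star)}-\sqn{\K^*(v^t-u^\star)}+\sqn{\K^*(v^{t+1}-v^t)}\Big).
\]
This is exactly where the $-\frac{a_{t+1}^2\gamma}{2+2a_{t+1}\gamma\mu_\g}\sqn{\K^*(\cdot)}$ terms on both sides of~\eqref{eqinea1} come from.

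The leftover $\frac{a_{t+1}^2\gamma}{2c_{t+1}}\sqn{\K^*(v^{t+1}-v^t)}$ must be controlled by the negative term $-\frac{a_{t+1}^2}{2\tau}\sqn{v^{t+1}-v^t}$. Writing $\tilde\nabla\hc(v^{t+1})$ relative to $\K z^{t+1}$, I expect the combination to collapse exactly into the residual $-\frac{\tau}{2}\sqn{\K z^{t+1}-\tilde\nabla\hc(v^{t+1})}$. This bookkeeping of quadratic terms, carried out without any stepsize condition beyond what the statement allows, is the step I expect to be the main obstacle. If exact collapse fails, my first fallback is to express $\frac{a_{t+1}}{\tau}(v^{t+1}-v^t)=\K z^{t+1}-\tilde\nabla\hc(v^{t+1})+\frac{a_{t+1}\gamma}{c_{t+1}}\K\K^*(v^{t+1}-v^t)$ and complete the square.

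What remains is the monotone-type part. The dual term $-a_{t+1}\langle\tilde\nabla\hc(v^{t+1})-\K x^\star,v^{t+1}-u^\star\rangle$ is bounded by $\mu_\hc$-strong convexity of $\hc$, then by convexity of the Bregman distance $D_\hc$ along $u^{t+1}=(1-\frac1{a_{t+1}})u^t+\frac1{a_{t+1}}v^{t+1}$, exactly as in Step~1 of Lemma~\ref{lemman1}. This yields the term $-\frac{a_{t+1}\mu_\hc}{2}\sqn{v^{t+1}-u^\star}$ and the dual part of $-a_{t+1}^2\mathcal{G}(x^{t+1},u^{t+1})+(a_{t+1}^2-a_{t+1})\mathcal{G}(x^t,u^t)$. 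The primal term $-a_{t+1}\langle \mu_\g z^{t+1}+\nabla\ff(y^t)-\mu_\g x^\star-\nabla\ff(x^\star),z^{t+1}-x^\star\rangle$ is handled by Lemma~\ref{lemman1} applied with $\g=\frac{\mu_\g}{2}\sqn{\cdot}$, taking $\mu_\ff=0$ and the reference subgradient $-\nabla\ff(x^\star)-\K^*u^\star$. Its proof goes through verbatim with the Bregman distances taken relative to this subgradient. Finally, the Bregman distances sum to the Lagrangian gap: $D_{\ff+\g}(x)+D_\hc(u)=\mathcal{L}(x,u^\star)-\mathcal{L}(x^\star,u)$. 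Collecting all terms then gives~\eqref{eqinea1}.
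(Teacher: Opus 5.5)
Your expansions are correct and match, term by term, what the paper obtains by expanding $\frac12\|\cdot\|^2_{\Q^{t+1}}$ in the block metric \eqref{eqblo}. This covers the coupling remainder $a_{t+1}\langle \K(\hat z^t-z^{t+1}),v^{t+1}-u^\star\rangle$, the identity $\hat z^t-z^{t+1}=\frac{a_{t+1}\gamma}{c_{t+1}}\K^*(v^{t+1}-v^t)$, the resulting $\K^*$-terms, and the Bregman/Lemma~\ref{lemman1} steps. The gap is exactly the step you flagged, and the exact collapse you expect does not happen. Set $w\coloneqq v^{t+1}-v^t$, $r\coloneqq \K z^{t+1}-\tilde{\nabla}\hc(v^{t+1})$ and $M\coloneqq \frac{\gamma\tau}{c_{t+1}}\K\K^*$. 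Your dual quadratic remainder is $-\frac{a_{t+1}^2}{2\tau}\langle w,(\Id-M)w\rangle$, and the dual update gives $\frac{a_{t+1}}{\tau}(\Id-M)w=r$. Hence
\[
-\frac{a_{t+1}^2}{2\tau}\langle w,(\Id-M)w\rangle=-\frac{\tau}{2}\sqn{r}-\frac{a_{t+1}^2}{2\tau}\langle w,(M-M^2)w\rangle .
\]
Completing the square, as in your fallback, only reproduces this identity. The last term is nonpositive for every $w$ if and only if $M\preceq\Id$, i.e., $\gamma\tau\|\K\|^2\le 1+a_{t+1}\gamma\mu_\g$. Without this, the remainder is not bounded by $-\frac{\tau}{2}\sqn{r}$ in general.

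So the missing ingredient is this stepsize condition, and your plan to do the bookkeeping without any stepsize condition cannot succeed. The paper uses the condition implicitly. It bounds $\langle w,\Q^{t+1}_{\mathcal{U}}w\rangle\ge \sqn{\Q^{t+1}_{\mathcal{U}}w}/\|\Q^{t+1}_{\mathcal{U}}\|\ge \frac{\tau}{a_{t+1}^2}\sqn{\Q^{t+1}_{\mathcal{U}}w}=\tau\sqn{r}$. That chain requires $0\preceq \Q^{t+1}_{\mathcal{U}}\preceq \frac{a_{t+1}^2}{\tau}\Id$, which is exactly $\gamma\tau\|\K\|^2\le 1+a_{t+1}\gamma\mu_\g$. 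The condition is not displayed in the theorem, but every result that invokes it assumes $\gamma\tau\|\K\|^2\le 1+a_0\gamma\mu_\g$ together with $a_{t+1}\ge a_t$. Add this hypothesis and your step closes in one line, since $M-M^2\succeq 0$.
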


\begin{proof}
Let $(x^\star,u^\star)$ be a solution of \eqref{eq2}. 
We conduct our analysis in the product space $\mathcal{X}\times\mathcal{U}$, adopting matrix-vector notation for the variables and linear operators. For every $t\geq 0$, we define the block linear operator 
\begin{equation}
\Q^t\coloneqq \begin{bmatrix}
\frac{1}{\gamma} \Id & \ze \\
\ze &\Q^t_{\mathcal{U}}
\end{bmatrix}, \quad \mbox{with } \Q^t_{\mathcal{U}}\eqdef\frac{a_t^2}{\tau} \Id-\frac{a_t^2\gamma}{1+a_t\gamma \mu_\g}\K\K^*.\label{eqblo}
\end{equation}
We define the pseudo %semi-
inner product 
$\langle \cdot,\cdot \rangle_{\Q^t}=\langle \cdot, \Q^t \cdot\rangle$ on 
$\mathcal{X}\times\mathcal{U}$. 
This choice 
is motivated by the fact that \algn{PAPC} can be cast as a preconditioned forward--backward algorithm in $\mathcal{X}\times\mathcal{U}$ endowed with this metric 
\citep{com14,con23}.

  We define $\tilde{\nabla}\hc(u^\star)\coloneqq \K x^\star$. Let $t\geq 0$. 
From Steps \eqref{eqapapcc1k} and \eqref{eqapapcc3k}, we have $(1+a_{t+1}\gamma\mu_\g)(z^{t+1}-\hat{z}^t)=-a_{t+1}\gamma \K^*(v^{t+1}-v^t)$. Substituting this relation into the dual update yields $\Q^{t+1}_{\mathcal{U}}(v^{t+1}-v^t)=\frac{a_{t+1}^2}{\tau}(v^{t+1}-v^t)-\frac{a_{t+1}^2\gamma}{1+a_{t+1}\gamma \mu_\g}\K\K^*(v^{t+1}-v^t)=a_{t+1}\K \hat{z}^t-a_{t+1}\tilde{\nabla}\hc(v^{t+1})+a_{t+1}\K(z^{t+1}-\hat{z}^t)=a_{t+1}\K z^{t+1}-a_{t+1}\tilde{\nabla}\hc(v^{t+1})$. Therefore, 
\begin{align*}
\Q^{t+1}\begin{bmatrix}
z^{t+1}-z^t \\
v^{t+1}-v^t
\end{bmatrix}&=a_{t+1}\begin{bmatrix}
-\mu_\g z^{t+1}-\nabla \ff(y^t)-\K^* v^{t+1} \\
\K z^{t+1}-\tilde{\nabla}\hc(v^{t+1})
\end{bmatrix} \\
&=a_{t+1}\begin{bmatrix}
-\mu_\g(z^{t+1}-x^\star)-\nabla \ff(y^t)+\nabla \ff(x^\star)-\K^* (v^{t+1}-u^\star) \\
\K (z^{t+1}-x^\star)-\tilde{\nabla}\hc(v^{t+1})+\tilde{\nabla}\hc(u^\star)
\end{bmatrix},
\end{align*}
where we used the optimality condition $0=\mu_\g x^\star + \nabla \ff( x^\star) + \K^*u^\star$. Hence, expanding the norm gives
%Hence,
\begin{align}
\frac{1}{2}\left\|\begin{bmatrix}
z^{t+1}-x^\star \\
v^{t+1}-u^{\star}
\end{bmatrix}\right\|_{\Q^{t+1}}^2&=\frac{1}{2}\left\|\begin{bmatrix}
z^t-x^\star \\
v^t-u^{\star}
\end{bmatrix}\right\|_{\Q^{t+1}}^2-\frac{1}{2}\left\|\begin{bmatrix}
z^{t+1}-z^t \\
v^{t+1}-v^t
\end{bmatrix}\right\|_{\Q^{t+1}}^2\notag\\
&\quad+\left\langle\begin{bmatrix}
z^{t+1}-z^t \\
v^{t+1}-v^t
\end{bmatrix},\begin{bmatrix}
z^{t+1}-x^\star \\
v^{t+1}-u^{\star}
\end{bmatrix}\right\rangle_{\Q^{t+1}}\notag\\
&= \frac{1}{2}\left\|\begin{bmatrix}
z^t-x^\star \\
v^t-u^{\star}
\end{bmatrix}\right\|_{\Q^{t+1}}^2-\frac{1}{2\gamma}\sqn{z^{t+1}-z^t}\label{eqkk1}\\
&\quad - \frac{1}{2}\left\langle v^{t+1}-v^t, \Q^t_{\mathcal{U}}(v^{t+1}-v^t)\right\rangle\notag\\
&\quad-a_{t+1}\mu_\g\sqn{z^{t+1}-x^\star}-a_{t+1}\left\langle \nabla \ff(y^t)-\nabla \ff(x^\star),z^{t+1}-x^\star\right\rangle\notag\\
&\quad-a_{t+1}\left\langle \tilde{\nabla}\hc(v^{t+1})-\tilde{\nabla}\hc(u^\star),v^{t+1}-u^\star\right\rangle\notag\\
&\quad+a_{t+1}\underbrace{\left\langle\begin{bmatrix}
-\K^* (v^{t+1}-u^\star) \\
\K (z^{t+1}-x^\star)
\end{bmatrix} ,\begin{bmatrix}
z^{t+1}-x^\star \\
v^{t+1}-u^{\star}
\end{bmatrix}\right\rangle}_{0}.\notag
\end{align}
Moreover,
\begin{align*}
\left\langle v^{t+1}-v^t, \Q^t_{\mathcal{U}}(v^{t+1}-v^t)\right\rangle &\ge \frac{1}{\|\Q^t_{\mathcal{U}}\|} \sqn{\Q^t_{\mathcal{U}}(v^{t+1}-v^t)}\\
&\ge \frac{\tau}{a_{t+1}^2}\sqn{\Q^t_{\mathcal{U}}(v^{t+1}-v^t)}\\
&=\tau\sqn{ \K z^{t+1}-\tilde{\nabla}\hc(v^{t+1})}.
\end{align*}
We note that if $\|\Q^t_{\mathcal{U}}\|=0$, this inequality holds trivially, as both sides vanish and $\K z^{t+1}-\tilde{\nabla}\hc(v^{t+1})=0$.

We introduce the Bregman distances 
\begin{align*}
&D_\ff: x\in\mathcal{X}\mapsto \ff(x)-\ff(x^\star)-\langle x-x^\star,\nabla \ff(x^\star)\rangle \geq 0,\\
&D_\g: x\in\mathcal{X}\mapsto \g(x)-\g(x^\star)-\langle x-x^\star,\mu_\g x^\star\rangle=\frac{\mu_\g}{2}\sqn{x-x^\star} \geq 0, \\
&D_\hc: u\in\mathcal{U}\mapsto \hc(u)-\hc(u^\star)-\langle u-u^\star,\tilde{\nabla}\hc(u^\star)\rangle \geq 0.
\end{align*}
Then, the Lagrangian gap can be expressed as
\begin{align}
\mathcal{G}_{x^\star,u^\star}(x,u)=\mathcal{L} (x,u^\star)-\mathcal{L} (x^\star,u)=(D_\ff+D_\g)(x)+D_\hc(u),\quad \forall (x,u)\in\mathcal{X}\times \mathcal{U}.\label{eqgapw}
\end{align}
Proceeding as in the proof of Lemma~\ref{lemman1}, we obtain
\begin{align*}
&-a_{t+1}\mu_\g\sqn{z^{t+1}-x^\star}-a_{t+1}\left\langle \nabla \ff(y^t)-\nabla \ff(x^\star),z^{t+1}-x^\star\right\rangle\\
& \leq -a_{t+1}^2 (D_\ff+D_\g)(x^{t+1})+(a_{t+1}^2-a_{t+1})(D_\ff+D_\g)(x^{t})-\frac{a_{t+1}\mu_\g}{2}\sqn{z^{t+1}-x^\star}+\frac{L_\ff}{2}\sqn{z^{t+1}-z^t}\\
&\quad  -\frac{a_{t+1}}{2 L_\ff} \sqn{\nabla \ff(y^t) - \nabla \ff(x^\star) } -\frac{a_{t+1}^2-a_{t+1}}{2 L_\ff} \sqn{\nabla \ff(x^t) - \nabla \ff(y^t) }.
 \end{align*}
Similarly, by the $\mu_\hc$-strong convexity of $\hc$, we have $\hc(u^\star)\geq \hc(v^{t+1})+ \langle \tilde{\nabla}\hc(v^{t+1}),u^\star-v^{t+1}\rangle + \frac{\mu_\hc}{2}\sqn{v^{t+1}-u^\star}$, so that 
$D_\hc(v^{t+1}) \leq \langle \tilde{\nabla}\hc(v^{t+1})-\tilde{\nabla} \hc(u^\star),v^{t+1}-u^\star\rangle - \frac{\mu_\hc}{2}\sqn{v^{t+1}-u^\star}$. Moreover, by the convexity of $D_\hc$, we have $D_\hc(u^{t+1}) \leq \big(1-\frac{1}{a_{t+1}}\big) D_\hc(u^{t})+\frac{1}{a_{t+1}} D_\hc(v^{t+1})$. Hence,
 \begin{align*}
&-a_{t+1} \big\langle \tilde{\nabla}\hc(v^{t+1})-\tilde{\nabla}\hc(u^\star),v^{t+1}-u^\star\big\rangle\notag\\
&\leq -a_{t+1} D_\hc(v^{t+1})-\frac{a_{t+1}\mu_\hc}{2}\sqn{v^{t+1}-u^\star}\notag\\
& \leq -a_{t+1}^2 D_\hc(u^{t+1})+(a_{t+1}^2-a_{t+1})D_\hc(u^{t})-\frac{a_{t+1}\mu_\hc}{2}\sqn{v^{t+1}-u^\star}.
\end{align*}
Substituting these bounds into \eqref{eqkk1} and expanding the squared norms, 
we obtain 
\begin{align*}
&\frac{1}{2\gamma}\sqn{z^{t+1}-x^\star}+\frac{a_{t+1}^2}{2\tau}\sqn{v^{t+1}-u^{\star}}  -\frac{a_{t+1}^2\gamma}{2+2a_{t+1}\gamma \mu_\g}\sqn{\K^*(v^{t+1}-u^{\star})}\\
&\leq\frac{1}{2\gamma}\sqn{z^{t}-x^\star}+\frac{a_{t+1}^2}{2\tau}\sqn{v^{t}-u^{\star}}  -\frac{a_{t+1}^2\gamma}{2+2a_{t+1}\gamma \mu_\g}\sqn{\K^*(v^{t}-u^{\star})}\\
&\quad -\left(\frac{1}{2\gamma}-\frac{L_\ff}{2}\right)\sqn{z^{t+1}-z^t }- \frac{\tau}{2}\sqn{ \K z^{t+1}-\tilde{\nabla}\hc(v^{t+1})}\\
&\quad- a_{t+1}^2 \big((D_\ff+D_\g)(x^{t+1})+D_\hc(u^{t+1})\big)+(a_{t+1}^2-a_{t+1})\big((D_\ff+D_\g)(x^{t})+D_\hc(u^{t})\big)\\
&\quad-\frac{a_{t+1}\mu_\g}{2}\sqn{z^{t+1}-x^\star}-\frac{a_{t+1}\mu_{\hc}}{2}\sqn{v^{t+1}-u^\star}\\
&\quad -\frac{a_{t+1}}{2 L_\ff} \sqn{\nabla \ff(y^t) - \nabla \ff(x^\star) }-\frac{a_{t+1}^2-a_{t+1}}{2 L_\ff} \sqn{\nabla \ff(x^t) - \nabla \ff(y^t) }.
\end{align*}
Finally, rearranging the terms and substituting the Lagrangian gap for the Bregman distances yields the inequality \eqref{eqinea1}.
\end{proof}

Note that, for any $t\geq 0$ and $u\in\mathcal{U}$, the condition $\gamma\tau\|\K\|^2\leq 1+a_t\gamma\mu_\g$ guarantees that $\frac{a_{t+1}^2}{2\tau}\sqn{u}  -\frac{a_{t+1}^2\gamma}{2+2a_{t+1}\gamma \mu_\g}\sqn{\K^*u}\geq 0$.
This ensures that the corresponding terms in \eqref{eqinea1} remain nonnegative.

\begin{remark}\label{rempolyak}
By applying Jensen's inequality to Step \ref{eqapapcc5k}  in \algn{APAPC}, we obtain, for every $t\geq 0$ and any dual solution $u^\star$, the bound $\sqn{u^{t+1}-u^\star} \leq \big(1-\frac{1}{a_{t+1}}\big) \sqn{u^t-u^\star} + \frac{1}{a_{t+1}}\sqn{v^{t+1} - u^\star}$. Consequently, if we have $\sqn{v^{t} - u^\star}\rightarrow 0$ and $\sum_{t\geq 1} \frac{1}{a_t} = \infty $, then $\sqn{u^t - u^\star}\rightarrow 0$ as well \citep[Lemma 3]{pol872}.
\end{remark}

\subsection{Accelerated Convergence with $\h$ Smooth}\label{sechs}

In this section, we consider the case where $\mu_\hc>0$. Equivalently, 
$\h$ is $L_\h$-smooth with  $L_\h = 1/\mu_\hc$.
In this regime, the dual problem \eqref{eq1cd} is strongly convex, ensuring that its solution $u^\star$ is unique.

As a consequence of Theorem \ref{theo2}, we obtain the following convergence guarantees.

\begin{theorem}[Accelerated convergence of \algn{APAPC} with $\mu_\hc>0$]\label{corcase1a}
Suppose that $\mu_\hc>0$  and
let $(x^\star,u^\star)$ 
 be a solution of \eqref{eq2}.  In \algn{APAPC}, 
 define the Lyapunov function 
\begin{align*}
\mathcal{E}^t_{x^\star}&\coloneqq \frac{1+a_{t}\gamma\mu_\g}{2\gamma}\sqn{z^{t}-x^\star}+\frac{a_{t}^2+a_{t}\tau\mu_\hc}{2\tau}\sqn{v^{t}-u^{\star}}  -\frac{a_{t}^2\gamma}{2+2a_{t}\gamma \mu_\g}\sqn{\K^*(v^{t}-u^{\star})}\\
&\quad +a_{t}^2 
\mathcal{G}_{x^\star,u^\star}(x^t,u^t)
,\quad\forall t\geq 0.
\end{align*}
Assume that $\gamma\leq \frac{1}{L_\ff}$, $a_0>0$, $\gamma\tau\|\K\|^2\leq 1+a_0\gamma\mu_\g$, and $a_{t+1}\geq a_t$ for every $t\geq 0$. 
Then 
\begin{equation*}
\mathcal{E}^{t+1}_{x^\star} \leq \max\left(\frac{1}{1+a_{t}\gamma\mu_\g},\frac{a_{t+1}^2}{a_t^2+a_t\tau\mu_\hc},
\frac{a_{t+1}^2-a_{t+1}}{a_t^2}\right)\mathcal{E}^t_{x^\star} ,\quad\forall t\geq 0.
\end{equation*}
Consequently, if, in addition, $a_{t+1}^2-a_{t+1}\leq a_t^2$ and $a_{t+1}^2\leq a_t^2+a_t\tau\mu_\hc$ for every $t\geq 0$ (note that since $a_1\geq 1$ is required, we must initialize $a_0\geq \frac{\sqrt{\tau^2\mu^2_\hc+4}-\tau\mu_\hc}{2}$), then 
\begin{equation*}
\mathcal{E}^{t}_{x^\star} \leq \mathcal{E}^0_{x^\star} ,\quad\forall t\geq 0.
\end{equation*}
This ensures the Lagrangian gap is bounded by
\begin{equation*}
\mathcal{G}_{x^\star,u^\star}(x^t,u^t)\leq \frac{\mathcal{E}^0_{x^\star} }{a_t^2} ,\quad\forall t\geq 0.
\end{equation*}
In particular, by choosing 
$a_0 \coloneqq \frac{\sqrt{\tau^2\mu^2_\hc+4}-\tau\mu_\hc}{2}$, $a_1=1$, and 
\begin{equation*}
a_{t+1}\coloneqq \min\left({\textstyle \sqrt{a_t^2+a_t\tau\mu_\hc},\frac{1+\sqrt{1+4a_t^2}}{2}}
\right),\quad\forall t\geq 1,
\end{equation*}
which 
grows 
as $a_t=\Omega\left( \frac{t}{2}\min(1,\tau\mu_\hc)\right)$, 
\algn{APAPC} achieves an accelerated sublinear convergence rate of
\begin{equation*}
\mathcal{G}_{x^\star,u^\star}(x^t,u^t)= \mathcal{O}\left(\frac{\mathcal{E}^0_{x^\star} }{\min(1,\tau\mu_\hc)^2 t^2}\right).
\end{equation*}
This implies $\sqn{u^t-u^\star}=\mathcal{O}(1/t^2)$ as well. 
\end{theorem}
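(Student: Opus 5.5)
The plan is to read Theorem~\ref{corcase1a} directly off the single-iteration inequality \eqref{eqinea1} of Theorem~\ref{theo2}, by comparing its two sides term by term with $\mathcal{E}^{t+1}_{x^\star}$ and $\mathcal{E}^t_{x^\star}$.

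\textbf{Step 1: bound $\mathcal{E}^{t+1}_{x^\star}$ by the right-hand side of \eqref{eqinea1}.} The left-hand side of \eqref{eqinea1} is exactly $\mathcal{E}^{t+1}_{x^\star}$. On the right-hand side, the assumption $\gamma\leq 1/L_\ff$ lets us drop the terms in $\sqn{z^{t+1}-z^t}$, in $\sqn{\K z^{t+1}-\tilde{\nabla}\hc(v^{t+1})}$, and the two gradient-difference terms, since all of them are nonpositive.

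\textbf{Step 2: compare the remaining terms with $\mathcal{E}^t_{x^\star}$.} Three groups remain.
\begin{itemize}
\item The primal term: $\frac{1}{2\gamma}\sqn{z^t-x^\star}=\frac{1}{1+a_t\gamma\mu_\g}\cdot\frac{1+a_t\gamma\mu_\g}{2\gamma}\sqn{z^t-x^\star}$.
\item The gap term: $(a_{t+1}^2-a_{t+1})\mathcal{G}=\frac{a_{t+1}^2-a_{t+1}}{a_t^2}\,a_t^2\mathcal{G}$, using $a_t>0$, which follows from $a_0>0$ and monotonicity.
\item The dual quadratic form
\[
\frac{a_{t+1}^2}{2\tau}\sqn{w}-\frac{a_{t+1}^2\gamma}{2+2a_{t+1}\gamma\mu_\g}\sqn{\K^*w},\qquad w=v^t-u^\star,
\]
which is the main obstacle.
\end{itemize}
Write this form as $a_{t+1}^2\big(\frac{1}{2\tau}\sqn{w}-c_{t+1}\sqn{\K^*w}\big)$ with $c_s=\frac{\gamma}{2+2a_s\gamma\mu_\g}$. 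Since $a_{t+1}\geq a_t$, we have $c_{t+1}\leq c_t$, so the form is at most
\[
a_{t+1}^2\Big(\frac{1}{2\tau}\sqn{w}-c_t\sqn{\K^*w}\Big)=\frac{a_{t+1}^2}{a_t^2}\Big(\frac{a_t^2}{2\tau}\sqn{w}-a_t^2c_t\sqn{\K^*w}\Big).
\]
I then need to bound this by $\frac{a_{t+1}^2}{a_t^2+a_t\tau\mu_\hc}$ times the dual part of $\mathcal{E}^t_{x^\star}$, namely $\frac{a_t^2+a_t\tau\mu_\hc}{2\tau}\sqn{w}-a_t^2c_t\sqn{\K^*w}$. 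Set $A=\frac{a_t^2}{2\tau}\sqn{w}-a_t^2c_t\sqn{\K^*w}\geq0$ and $B=\frac{a_t\mu_\hc}{2}\sqn{w}\geq0$; nonnegativity of $A$ follows from $\gamma\tau\|\K\|^2\leq 1+a_0\gamma\mu_\g\leq 1+a_t\gamma\mu_\g$. The claim becomes $\frac{A}{a_t^2}\leq\frac{A+B}{a_t^2+a_t\tau\mu_\hc}$. This holds because $\frac{A}{a_t^2}\leq\frac{1}{2\tau}\sqn{w}=\frac{B}{a_t\tau\mu_\hc}$, and the mediant of $\frac{A}{a_t^2}$ and $\frac{B}{a_t\tau\mu_\hc}$ lies between them. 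Taking the maximum of the three ratios then gives the contraction inequality, with all pieces of $\mathcal{E}^t_{x^\star}$ nonnegative.

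\textbf{Step 3: consequences.} Under the extra conditions on $(a_t)$, all three ratios are at most $1$. The first is immediate; the second and third are exactly the assumed inequalities. So $\mathcal{E}^t_{x^\star}$ is nonincreasing. The gap bound follows because every other term of $\mathcal{E}^t_{x^\star}$ is nonnegative.

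For the explicit sequence:
\begin{itemize}
\item $a_0$ is chosen so that $a_0^2+a_0\tau\mu_\hc=1$, which permits $a_1=1$.
\item The recursion enforces both conditions and is nondecreasing.
\item For growth: if $a_t\geq\tau\mu_\hc$, then $\sqrt{a_t^2+a_t\tau\mu_\hc}\geq a_t+\frac{\tau\mu_\hc}{2}\cdot\frac{1}{1+\tau\mu_\hc/(4a_t)}$. More simply, $(a_t+c)^2\leq a_t^2+a_t\tau\mu_\hc$ whenever $c\leq\frac{\tau\mu_\hc}{2}\cdot\frac{a_t}{a_t+c}$. Together with $\frac{1+\sqrt{1+4a_t^2}}{2}\geq a_t+\frac12$, this gives increments of order $\frac12\min(1,\tau\mu_\hc)$ once $a_t\geq1$. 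That yields the linear growth $a_t=\Omega(\frac t2\min(1,\tau\mu_\hc))$ and hence the $\mathcal{O}(1/t^2)$ gap rate.
\end{itemize}

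Finally, $\mathcal{G}_{x^\star,u^\star}(x^t,u^t)\geq\frac{\mu_\hc}{2}\sqn{u^t-u^\star}$ gives $\sqn{u^t-u^\star}=\mathcal{O}(1/t^2)$.
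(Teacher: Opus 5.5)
Your route (discard the nonpositive terms of \eqref{eqinea1} and compare what remains, piece by piece, with $\mathcal{E}^t_{x^\star}$) is the one the paper intends; it gives no argument beyond ``as a consequence of Theorem~\ref{theo2}''. Your Step~1, Step~3, the mediant argument, the choice of $a_0$, the growth estimate and the final bound on $\sqn{u^t-u^\star}$ are fine.

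The gap is in the dual quadratic form, where your inequality runs the wrong way. Since $c_s=\frac{\gamma}{2+2a_s\gamma\mu_\g}$ decreases in $a_s$, the relation $c_{t+1}\le c_t$ gives $-c_{t+1}\sqn{\K^*w}\ge -c_t\sqn{\K^*w}$. So the form is \emph{at least}, not at most, $a_{t+1}^2\big(\frac{1}{2\tau}\sqn{w}-c_t\sqn{\K^*w}\big)$. Your claimed bound is false whenever $\mu_\g>0$, $a_{t+1}>a_t$ and $\K^*w\neq 0$. It is valid only when $\mu_\g=0$, where $c_{t+1}=c_t$ and your argument goes through verbatim.

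What does hold is that $a\mapsto \frac{a^2\gamma}{2+2a\gamma\mu_\g}$ is nondecreasing. Hence $a_{t+1}^2c_{t+1}\ge a_t^2c_t$, and the form is at most $\frac{a_{t+1}^2}{2\tau}\sqn{w}-a_t^2c_t\sqn{\K^*w}$. If the maximum $\rho$ of the three ratios is at most $1$, then, using $\rho\,(a_t^2+a_t\tau\mu_\hc)\ge a_{t+1}^2$,
\[
\rho\Big(\frac{a_t^2+a_t\tau\mu_\hc}{2\tau}\sqn{w}-a_t^2c_t\sqn{\K^*w}\Big)\ge\frac{a_{t+1}^2}{2\tau}\sqn{w}-a_t^2c_t\sqn{\K^*w},
\]
which closes the step. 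This covers everything the rest of the statement uses: $\mathcal{E}^t_{x^\star}\le\mathcal{E}^0_{x^\star}$, the gap bound, the rate, and Corollary~\ref{corcase1b}, where the factor is below $1$.

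When $\rho>1$ and $\mu_\g>0$, the term-by-term comparison is not enough in general. If $\rho=\frac{a_{t+1}^2}{a_t^2+a_t\tau\mu_\hc}$, then for any $w$ with $\K^*w\neq0$ the required inequality is equivalent to $c_{t+1}\ge\frac{a_t}{a_t+\tau\mu_\hc}c_t$, that is, $a_t\gamma\mu_\g(a_{t+1}-a_t)\le\tau\mu_\hc(1+a_t\gamma\mu_\g)$. This follows from $a_{t+1}^2\le a_t^2+a_t\tau\mu_\hc$, since then $a_{t+1}-a_t\le\tau\mu_\hc/2$. It fails, however, for $t=0$, $a_0=1$, $\gamma\mu_\g=\frac12$, $\tau\mu_\hc=10^{-2}$, $a_1=100$; there the second ratio is the maximum. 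So when $\mu_\g>0$, the first display in its full stated generality does not follow from a term-by-term reading of \eqref{eqinea1}. Restrict your claim to $\rho\le 1$ (or to the growth condition above), which is all the theorem's consequences require.
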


\begin{corollary}[Accelerated linear convergence of \algn{APAPC} with $\mu_\g>0$ and $\mu_\hc>0$]\label{corcase1b}
Under the conditions of Theorem \ref{corcase1a}, suppose additionally that
$L_\ff\geq \mu_\g >0$. Then the solution $(x^\star,u^\star)$ of \eqref{eq2} is unique.
Let $a_\sharp \coloneqq \sqrt{\frac{L_\ff}{\mu_\g}}\geq 1$.
 In \algn{APAPC}, choose 
 $\gamma=\min\left(\frac{1}{L_\ff},\frac{1}{\|\K\|}\sqrt{\frac{\mu_\hc}{L_\ff}}\right)$, 
 $\tau=\frac{1}{\gamma\|\K\|^2}$, 
$a_0 \coloneqq \frac{\sqrt{\tau^2\mu^2_\hc+4}-\tau\mu_\hc}{2}$, $a_1=1$, and 
\begin{equation*}
a_{t+1}\coloneqq \min\left({\textstyle \sqrt{a_t^2+a_t\tau\mu_\hc},\frac{1+\sqrt{1+4a_t^2}}{2}},a_\sharp
\right),\quad\forall t\geq 1.
\end{equation*}
 Let $T\geq 1$ such that $a_T=a_\sharp$. Then 
\algn{APAPC} achieves accelerated linear convergence:
\begin{equation*}
\mathcal{E}^{t+1}_{x^\star} \leq \max\left(\frac{1}{1+\sqrt{\frac{\mu_\g}{L_\ff}}},\frac{1}{1+\frac{\sqrt{\mu_\g \mu_\hc}}{\|\K\|}}\right)\mathcal{E}^t_{x^\star} ,\quad\forall t\geq T.
\end{equation*}
Consequently, 
its iteration complexity (to reach an accuracy $\mathcal{E}^t_{x^\star}
\leq \epsilon$) is
\begin{equation}\label{eqopr1}
\mathcal{O}\left(\left(\sqrt{\frac{L_\ff}{\mu_\g}}+\frac{\|\K\|}{\sqrt{\mu_\g\mu_{\hc}}}\right)\log \left(\frac{\mathcal{E}^0_{x^\star}}{\epsilon}\right)
\right).
\end{equation}
\end{corollary}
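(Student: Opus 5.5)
The plan is to apply the contraction inequality of Theorem~\ref{corcase1a} for $t\geq T$, where $a_t=a_{t+1}=a_\sharp$, and bound each of the three ratios in the maximum by the claimed constants.

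\textbf{Uniqueness and admissibility.} Uniqueness of $(x^\star,u^\star)$ is immediate. $\Psi$ is $\mu_\g$-strongly convex, so $x^\star$ is unique. The dual objective contains $\hc$, which is $\mu_\hc$-strongly convex, so $u^\star$ is unique.

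Next I would check the hypotheses of Theorem~\ref{corcase1a}:
\begin{itemize}
\item $\gamma\leq 1/L_\ff$ holds by construction.
\item $\gamma\tau\|\K\|^2=1\leq 1+a_0\gamma\mu_\g$ holds.
\item $a_0>0$ holds.
\item The sequence is nondecreasing. For $t\geq1$, both $\sqrt{a_t^2+a_t\tau\mu_\hc}$ and $\frac{1+\sqrt{1+4a_t^2}}{2}$ exceed $a_t$, and $a_t\leq a_\sharp$ by induction (since $a_1=1\leq a_\sharp$). For $t=0$, $a_1=1$ satisfies $a_1^2=a_0^2+a_0\tau\mu_\hc$ by the choice of $a_0$, and $a_0\leq 1$.
\end{itemize}
The minimum in the recursion then guarantees the two additional growth conditions. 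The sequence increases until it reaches $a_\sharp$ in finite time: while below the cap it grows at least by $\min\big(\tfrac12,\,\tau\mu_\hc/4\big)$-type increments, as in Theorem~\ref{corcase1a}. Hence $T$ exists, and $a_t=a_\sharp$ for all $t\geq T$. (If $T$ is only assumed to exist, as in the statement, this step is not even needed.)

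\textbf{The three ratios for $t\geq T$.} With $a_t=a_{t+1}=a_\sharp$:
\begin{itemize}
\item The first ratio is $\frac{1}{1+a_\sharp\gamma\mu_\g}$.
\item The third ratio is $\frac{a_\sharp^2-a_\sharp}{a_\sharp^2}=1-\frac{1}{a_\sharp}\leq \frac{1}{1+1/a_\sharp}$, and $1/a_\sharp=\sqrt{\mu_\g/L_\ff}$.
\item The second ratio is $\frac{a_\sharp}{a_\sharp+\tau\mu_\hc}=\frac{1}{1+\tau\mu_\hc/a_\sharp}$.
\end{itemize}
For the first ratio, $a_\sharp\gamma\mu_\g=\gamma\sqrt{L_\ff\mu_\g}$, so I split on which term attains the minimum defining $\gamma$:
\begin{itemize}
\item If $\gamma=1/L_\ff$, then $a_\sharp\gamma\mu_\g=\sqrt{\mu_\g/L_\ff}$.
\item Otherwise $\gamma=\frac{1}{\|\K\|}\sqrt{\mu_\hc/L_\ff}$, and $a_\sharp\gamma\mu_\g=\frac{\sqrt{\mu_\g\mu_\hc}}{\|\K\|}$.
\end{itemize}
Either way, the first ratio is bounded by $\max\Big(\frac{1}{1+\sqrt{\mu_\g/L_\ff}},\frac{1}{1+\sqrt{\mu_\g\mu_\hc}/\|\K\|}\Big)$.

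For the second ratio, $\tau\mu_\hc/a_\sharp=\frac{\mu_\hc}{\gamma\|\K\|^2}\sqrt{\mu_\g/L_\ff}$. Since $\gamma\leq\frac{1}{\|\K\|}\sqrt{\mu_\hc/L_\ff}$, we get $\frac{\mu_\hc}{\gamma\|\K\|^2}\geq \frac{\sqrt{\mu_\hc L_\ff}}{\|\K\|}$. Hence $\tau\mu_\hc/a_\sharp\geq \frac{\sqrt{\mu_\g\mu_\hc}}{\|\K\|}$, which gives the second term of the claimed maximum.

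\textbf{Complexity.} Iterating the contraction gives $\mathcal{E}^t\leq \rho^{t-T}\mathcal{E}^T\leq\rho^{t-T}\mathcal{E}^0$, using the monotonicity $\mathcal{E}^T\leq\mathcal{E}^0$ from Theorem~\ref{corcase1a}. Since $\log(1+s)^{-1}\approx -s$ for small $s$, this yields $\mathcal{O}\big((\sqrt{L_\ff/\mu_\g}+\|\K\|/\sqrt{\mu_\g\mu_\hc})\log(\mathcal{E}^0/\epsilon)\big)$ iterations after $T$.

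It remains to bound the warm-up length $T$ by the same order. From $a_{t+1}^2\geq a_t^2+\min(a_t,a_t\tau\mu_\hc)$, the sequence grows roughly linearly at rate $\min(1,\tau\mu_\hc)/2$, so $T=\mathcal{O}\big(a_\sharp/\min(1,\tau\mu_\hc)\big)$. Then:
\begin{itemize}
\item $a_\sharp=\sqrt{L_\ff/\mu_\g}$.
\item $a_\sharp/(\tau\mu_\hc)\leq \|\K\|/\sqrt{\mu_\g\mu_\hc}$, by the computation above.
\end{itemize}
So $T$ is absorbed into the stated bound, up to the log factor. I expect this warm-up bound, and getting the correct constant in the lower bound on the growth of $a_t$, to be the only mildly delicate point. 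The rest is the direct ratio comparison.
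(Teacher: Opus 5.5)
Your proposal is correct and follows the paper's argument: you evaluate the three contraction factors of Theorem~\ref{corcase1a} at $a_t=a_{t+1}=a_\sharp$ with $\tau=1/(\gamma\|\K\|^2)$, bound the third by $\frac{1}{1+\sqrt{\mu_\g/L_\ff}}$, and control the second through the constraint $\gamma\le\frac{1}{\|\K\|}\sqrt{\mu_\hc/L_\ff}$. Your additional checks fill in details the paper leaves implicit: uniqueness of the solution, admissibility of the sequence $(a_t)$, and the $\mathcal{O}\big(a_\sharp/\min(1,\tau\mu_\hc)\big)$ bound on the warm-up length $T$, which is what lets the warm-up be absorbed into the stated complexity.
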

\begin{proof}
 We choose $\tau=\frac{1}{\gamma\|\K\|^2}$ and seek $a_\sharp\geq 1$ and $\gamma\leq \frac{1}{L_\ff}$ that minimize the maximum of the three terms: (i) $\frac{1}{1+a_\sharp \gamma\mu_\g}\geq \frac{1}{1+a_\sharp \frac{\mu_\g}{L_\ff}}$, (ii) $\frac{1}{1+\frac{\mu_\hc}{a_\sharp \gamma \|\K\|^2}}$, (iii) $1-\frac{1}{a_\sharp}\leq \frac{1}{1+\frac{1}{a_\sharp}}$. By choosing $a_\sharp = \sqrt{\frac{L_\ff}{\mu_\g}}$, (i) dominates (iii). Also, (i) dominates (ii) provided $\gamma\leq \frac{1}{\|\K\|}\sqrt{\frac{\mu_\hc}{L_\ff}}$. 
This leads to the stated choice of $\gamma$, and the contraction factor is
$\frac{1}{1+a_\sharp \gamma\mu_\g}= \max\Big(\left(1+\sqrt{\frac{\mu_\g}{L_\ff}}\right)^{-1},\left(1+\frac{\sqrt{\mu_\g \mu_\hc}}{\|\K\|}\right)^{-1}\Big)$.
\end{proof}

\begin{remark}
If $a_t\equiv 1$, \algn{APAPC} reduces to \algn{PAPC}. Under the conditions of Corollary \ref{corcase1b}, with  $\gamma=\min\left(\frac{1}{L_\ff},\frac{1}{\|\K\|}\sqrt{\frac{\mu_\hc}{\mu_\g}}\right)$ and $\tau=\frac{1}{\gamma\|\K\|^2}$, the complexity of \algn{PAPC} is 
\begin{align*}
\mathcal{O}\left(\left(\frac{L_\ff}{\mu_\g}+\frac{\|\K\|}{\sqrt{\mu_\g\mu_{\hc}}}\right)\log \left(\frac{\mathcal{E}^0_{x^\star}}{\epsilon}\right)
\right).
\end{align*}
The first term is not accelerated.
In contrast, \algn{APAPC} accelerates both the primal dependence on $L_\ff/\mu_\g$ and the primal--dual coupling factor, as shown in \eqref{eqopr1}.
\end{remark}
\begin{remark}
When $\h=0$, letting $L_\h\rightarrow 0$ (equivalently $\mu_\hc\rightarrow \infty$) in Theorem \ref{corcase1a} and Corollary \ref{corcase1b} correctly recovers the results established in Section \ref{secne} for minimizing $\ff+\g$ using \algn{APGD}.
\end{remark}
\begin{remark}
When $\h$ is smooth, Problem \eqref{eq1} consists of minimizing $\tilde{\ff}+\g$, where $\tilde{\ff}=\ff+\h\circ \K$ is $(L_\ff+\|\K\|^2L_\h)$-smooth. 
Such a problem can be solved directly by a primal method that evaluates the gradient $\nabla \tilde{\ff}(y^t)=\nabla \ff(y^t)+\K^* \nabla\h (\K y^t)$. As established in Corollary \ref{cor13}, \algn{APGD} applied to this formulation yields a complexity scaling as $\sqrt{\frac{L_\ff+\|\K\|^2L_\h}{\mu_\g}}$, matching \eqref{eqopr1}.
However, \algn{APAPC} offers practical advantages: because the two functions $\ff$ and $\h\circ \K$ are decoupled and processed via two distinct stepsizes ($\gamma$ and $\tau$), \algn{APAPC} can achieve better hidden constants and faster convergence. 
Moreover, the proximal treatment of $\h$ enhances robustness: inaccuracies in $L_\h$ may destabilize explicit gradient steps in \algn{APGD}, whereas the implicit update in \algn{APAPC} remains stable under such misspecification.
\end{remark}

\subsection{Accelerated Convergence with $\K^*$ Bounded Below}\label{secinj}

In this section, we consider the case where $\K^*$ is bounded below; that is, we define 
\begin{equation}
\lambda_{\min}(\K\K^*) \eqdef \inf_{u\in\mathcal{U}, u \neq 0} \frac{\|\K^*u\|^2}{\|u\|^2},\label{eqlam1}
\end{equation}
and assume $\lambda_{\min}(\K\K^*) >0$. If $\mathcal{U}$ has finite dimension, $\lambda_{\min}(\K\K^*)$ is the smallest eigenvalue of $\K\K^*$. 
In this regime too, the dual problem \eqref{eq1cd} is strongly convex, ensuring that its solution $u^\star$ is unique.

\begin{theorem}[Accelerated convergence of \algn{APAPC} with $\lambda_{\min}(\K\K^*) >0$]\label{theoinj}
Suppose that $\lambda_{\min}(\K\K^*) >0$  and $\mu_\g\leq \frac{L_\ff}{2}$, and 
let $(x^\star,u^\star)$  be a solution of \eqref{eq2}.  In \algn{APAPC}, 
define the Lyapunov function 
\begin{align*}
\mathcal{E}^t_{x^\star}&\coloneqq \frac{2+a_{t}\gamma\mu_\g}{4\gamma}\sqn{z^{t}-x^\star}+
\frac{2a_{t}^2+\min\big(\frac{1}{2},\frac{1}{2a_{t}\gamma L_\ff}\big)
\gamma \tau a_{t}^2 \lambda_{\min}(\K\K^*)}{4\tau}
\sqn{v^{t}-u^{\star}} \\
&\quad -\frac{a_{t}^2\gamma}{2+2a_{t}\gamma \mu_\g}\sqn{\K^*(v^{t}-u^{\star})}+a_{t}^2 
\mathcal{G}_{x^\star,u^\star}(x^t,u^t),\quad\forall t\geq 0.
\end{align*}
Assume that $\gamma\leq \frac{1}{2 L_\ff}$, $a_0>0$, $\gamma\tau\|\K\|^2\leq 1+a_0\gamma\mu_\g$, and $a_{t+1}\geq a_t$ for every $t\geq 0$. Then
\begin{equation*}
\mathcal{E}^{t+1}_{x^\star} \leq \max\left(\frac{1}{1+\frac{1}{2}a_{t}\gamma\mu_\g},\frac{a_{t+1}^2}{a_t^2+\frac{1}{4} a_{t}^2\gamma \tau \lambda_{\min}(\K\K^*)},\frac{a_{t+1}^2}{a_t^2+\frac{1}{4\gamma L_\ff} a_{t}\gamma\tau  \lambda_{\min}(\K\K^*)},
\frac{a_{t+1}^2-a_{t+1}}{a_t^2}\right)\mathcal{E}^t_{x^\star} ,
\end{equation*}
for every $t\geq 0$. Consequently, choosing $\tau=\frac{\nu}{\gamma\|\K\|^2}$ for some $\nu\in (0,1]$, $a_0=a_1=1$, and 
\begin{equation*}
a_{t+1}\coloneqq \min\left(a_t{\textstyle \sqrt{1+\frac{\nu\lambda_{\min}(\K\K^*)}{4 \|\K\|^2}}},{\textstyle \sqrt{a_t^2+a_{t}\frac{\nu\lambda_{\min}(\K\K^*)}{4\gamma L_\ff \|\K\|^2}},\frac{1+\sqrt{1+4a_t^2}}{2}}
\right),\quad\forall t\geq 1,
\end{equation*}
which grows as $a_t=\Omega\left( \frac{t}{2}\min\left(1,\frac{\nu\lambda_{\min}(\K\K^*)}{4\gamma L_\ff \|\K\|^2}\right)\right)$, we obtain
\begin{equation*}
\mathcal{E}^{t}_{x^\star} \leq \mathcal{E}^0_{x^\star} ,\quad\forall t\geq 0.
\end{equation*}
Therefore,
\begin{equation*}
\mathcal{G}_{x^\star,u^\star}(x^t,u^t)\leq \frac{\mathcal{E}^0_{x^\star} }{a_t^2} =\mathcal{O}\left(\frac{1}{t^2}\right),\quad\forall t\geq 0.
\end{equation*}
Moreover, if $\nu<1$, then $\sqn{v^{t}-u^{\star}}=\mathcal{O}(1/t^2)$; if $\nu=1$, then $\sqn{v^{t}-u^{\star}}=\mathcal{O}(1/t)$.
\end{theorem}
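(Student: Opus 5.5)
The plan is to start from the single-iteration inequality \eqref{eqinea1} of Theorem~\ref{theo2} with $\mu_\hc=0$. The dual strong convexity that Theorem~\ref{corcase1a} got from $\mu_\hc>0$ will instead be manufactured from the negative "dissipation" terms on the right-hand side, which are discarded in Theorem~\ref{corcase1a}. The key identity comes from Step~\ref{eqapapcc3k} combined with the optimality condition $0=\mu_\g x^\star+\nabla\ff(x^\star)+\K^*u^\star$:
\begin{equation*}
a_{t+1}\gamma\,\K^*(v^{t+1}-u^\star) = -(z^{t+1}-z^t) - a_{t+1}\gamma\mu_\g (z^{t+1}-x^\star) - a_{t+1}\gamma\big(\nabla\ff(y^t)-\nabla\ff(x^\star)\big).
\end{equation*}
Together with $\|\K^*u\|^2\geq \lambda_{\min}(\K\K^*)\|u\|^2$, a three-term Young inequality gives
\begin{equation*}
a_{t+1}^2\gamma^2\lambda_{\min}(\K\K^*)\sqn{v^{t+1}-u^\star}\leq 3\sqn{z^{t+1}-z^t}+3a_{t+1}^2\gamma^2\mu_\g^2\sqn{z^{t+1}-x^\star}+3a_{t+1}^2\gamma^2\sqn{\nabla\ff(y^t)-\nabla\ff(x^\star)}.
\end{equation*}
The constants may need to be tuned to reproduce exactly the factor $\frac14$ in the statement.

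Each of the three terms on the right is then paid for by a resource available in \eqref{eqinea1}.
\begin{itemize}
\item \emph{Increment term.} Since $\gamma\leq\frac1{2L_\ff}$, the coefficient $\frac1{2\gamma}-\frac{L_\ff}{2}$ is at least $\frac1{4\gamma}$, so a multiple $\frac{c}{\gamma}\sqn{z^{t+1}-z^t}$ is free.
\item \emph{Distance term.} Half of the gain $\frac{a_{t+1}\mu_\g}{2}\sqn{z^{t+1}-x^\star}$ is sacrificed, which is why the first coefficient of the Lyapunov function becomes $\frac{2+a_t\gamma\mu_\g}{4\gamma}$. The resulting term $\frac{a_{t+1}\mu_\g}{4}\sqn{z^{t+1}-x^\star}$ must dominate $c\,a_{t+1}^2\gamma\mu_\g^2\sqn{z^{t+1}-x^\star}$. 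This is where $\mu_\g\leq L_\ff/2$ and the cap $\min\big(\frac12,\frac1{2a_{t+1}\gamma L_\ff}\big)$ on $c$ enter.
\item \emph{Gradient term.} The term $\frac{a_{t+1}}{2L_\ff}\sqn{\nabla\ff(y^t)-\nabla\ff(x^\star)}$ must dominate $c\,a_{t+1}^2\gamma\sqn{\cdots}$. This forces $c\lesssim \frac{1}{a_{t+1}\gamma L_\ff}$, which explains the second branch of the min.
\end{itemize}
Multiplying the Young bound by $c/\gamma$ with $c=\frac14\min\big(\frac12,\frac1{2a_{t+1}\gamma L_\ff}\big)\tau$-scaled appropriately yields an extra term on the left of order $\min\big(\frac12,\frac{1}{2a_{t+1}\gamma L_\ff}\big)\gamma\tau a_{t+1}^2\lambda_{\min}(\K\K^*)\frac{1}{4\tau}\sqn{v^{t+1}-u^\star}$. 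This is exactly the extra dual coefficient in $\mathcal{E}^{t+1}_{x^\star}$. I expect this bookkeeping to be the main obstacle: all three domination conditions must hold simultaneously with constants matching the statement, and $a_{t+1}$ appears on both sides through the min.

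Once this is in place, the contraction follows as in Theorem~\ref{corcase1a}. Using $a_{t+1}\geq a_t$, the assumption $\gamma\tau\|\K\|^2\leq 1+a_0\gamma\mu_\g$ guarantees that the $\K^*$-corrected quadratic forms are nonnegative. The $\K^*$ term on the right, with coefficient depending on $a_{t+1}$, is then bounded by its $a_t$-version, since $\frac{a^2\gamma}{2+2a\gamma\mu_\g}$ is increasing in $a$. Comparing coefficients term by term between the right-hand side of the new inequality and $\mathcal{E}^t_{x^\star}$ gives the four ratios in the max. The $z$ ratio is $\frac{2}{2+a_t\gamma\mu_\g}$, and the gap ratio is $\frac{a_{t+1}^2-a_{t+1}}{a_t^2}$. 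The dual ratio splits into two cases according to which branch of the min is active, giving the two $\lambda_{\min}$ ratios.

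With the stated recursion for $a_{t+1}$ and $\tau=\frac{\nu}{\gamma\|\K\|^2}$, each ratio is at most $1$, so $\mathcal{E}^t_{x^\star}\leq\mathcal{E}^0_{x^\star}$ and the gap bound follows. The linear growth of $a_t$ follows as in Theorem~\ref{corcase1a}, since the second branch grows additively like $\frac{\nu\lambda_{\min}(\K\K^*)}{8\gamma L_\ff\|\K\|^2}$ per step and the other branches grow at least as fast. For the dual iterates, if $\nu<1$ then $\frac{a_t^2}{2\tau}\sqn{u}-\frac{a_t^2\gamma}{2}\sqn{\K^*u}\geq \frac{(1-\nu)a_t^2}{2\tau}\sqn{u}$, which gives $\sqn{v^t-u^\star}=\mathcal{O}(1/a_t^2)$. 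If $\nu=1$, only the extra term of order $\min\big(a_t^2,\frac{a_t}{\gamma L_\ff}\big)\sqn{v^t-u^\star}\sim a_t$ remains, which gives $\mathcal{O}(1/t)$.
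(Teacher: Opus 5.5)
Your proposal follows the paper's proof. You start from \eqref{eqinea1} and apply Young's inequality to the corrector identity for $a_{t+1}\gamma\K^*(v^{t+1}-u^\star)$. You then pay the three error terms with a $\delta_{t+1}=\min\big(\frac12,\frac{1}{2a_{t+1}\gamma L_\ff}\big)$-fraction of $\frac{1}{2\gamma}\sqn{z^{t+1}-z^t}$, with half of the $\frac{a_{t+1}\mu_\g}{2}\sqn{z^{t+1}-x^\star}$ gain (using $\mu_\g\le L_\ff/2$), and with the dissipation term $\frac{a_{t+1}}{2L_\ff}\sqn{\nabla\ff(y^t)-\nabla\ff(x^\star)}$, and finally compare coefficients with $\mathcal{E}^t_{x^\star}$.

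The one thing left is the weighting you flagged. Your equal split $(3,3,3)$ gives only $\frac{1}{12}a_{t+1}^2\gamma\lambda_{\min}(\K\K^*)$ instead of the stated $\frac{1}{8}a_{t+1}^2\gamma\lambda_{\min}(\K\K^*)$ in the first branch. The paper instead uses $\|b+c+d\|^2\ge\frac12\|b\|^2-2\|c\|^2-2\|d\|^2$ on $\frac{1}{2\gamma}\sqn{z^{t+1}-z^t}$, multiplied by $\delta_{t+1}$. This yields exactly $\frac{\delta_{t+1}\gamma a_{t+1}^2\lambda_{\min}(\K\K^*)}{4}\sqn{v^{t+1}-u^\star}$, at costs $\frac{\delta_{t+1}}{2\gamma}\le\frac{1}{4\gamma}$, $\delta_{t+1}a_{t+1}^2\gamma\mu_\g^2\le\frac{a_{t+1}\mu_\g}{4}$, and $\delta_{t+1}a_{t+1}^2\gamma\le\frac{a_{t+1}}{2L_\ff}$.
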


\begin{proof}
Let $(x^\star,u^\star)$  be a solution of \eqref{eq2} and  $t\geq 0$. Starting from \eqref{eqinea1}, we exploit the negative term  $-\frac{1}{2\gamma}\sqn{z^{t+1}-z^t }$ 
to bring up a negative factor $-\sqn{v^{t+1}-u^\star}$. To this end, we use Young's inequality $\|b+c+d\|^2 \geq \|b\|^2/2 -2\|c\|^2-2\|d\|^2$ for any vectors $b,c,d$. This yields
\begin{align*}
\frac{1}{2\gamma}\sqn{z^{t+1}-z^t} &=\frac{\gamma a_{t+1}^2 }{2}\sqn{\mu_\g (z^{t+1}-x^\star)+\nabla \ff(y^t)-\nabla \ff(x^\star)+\K^*(v^{t+1}-u^\star)}\notag\\
&\geq \frac{\gamma a_{t+1}^2}{4}\sqn{\K^*(v^{t+1}-u^\star)} -
a_{t+1}^2\gamma \mu^2_\g\sqn{z^{t+1}-x^\star} -a_{t+1}^2\gamma\sqn{\nabla \ff(y^t)-\nabla \ff(x^\star)}.
\end{align*}
Define $\delta_{t+1} \coloneqq  \min\big(\frac{1}{2},\frac{1}{2a_{t+1}\gamma L_\ff}\big)$. Then
\begin{align*}
-\frac{1}{2\gamma}\sqn{z^{t+1}-z^t}&=-\frac{1-\delta_{t+1}}{2\gamma}\sqn{z^{t+1}-z^t}-\frac{\delta_{t+1}}{2\gamma}\sqn{z^{t+1}-z^t}\\
&\leq -\frac{1}{4\gamma}\sqn{z^{t+1}-z^t}
-\frac{\delta_{t+1}\gamma a_{t+1}^2\lambda_{\min}(\K\K^*)}{4}\sqn{v^{t+1}-u^\star} +
\frac{a_{t+1} \mu^2_\g}{2L_\ff}\sqn{z^{t+1}-x^\star} \\
&\quad+\frac{a_{t+1}}{2L_\ff}
\sqn{\nabla \ff(y^t)-\nabla \ff(x^\star)},
\end{align*}
where we used $\delta_{t+1}\leq \frac{1}{2}$, 
$\delta_{t+1}\leq \frac{1}{2a_{t+1}\gamma L_\ff}$, 
and 
\begin{equation}
\sqn{\K^*(v^{t+1}-u^\star)}\geq \lambda_{\min}(\K\K^*) \sqn{v^{t+1}-u^\star}.\label{eqlowb}
\end{equation}
Substituting this bound into \eqref{eqinea1} gives
\begin{align*}
&\frac{1+a_{t+1}\gamma\mu_\g}{2\gamma}\sqn{z^{t+1}-x^\star}+\frac{a_{t+1}^2}{2\tau}\sqn{v^{t+1}-u^{\star}}  -\frac{a_{t+1}^2\gamma}{2+2a_{t+1}\gamma \mu_\g}\sqn{\K^*(v^{t+1}-u^{\star})}\notag\\
&\quad +a_{t+1}^2 \mathcal{G}_{x^\star,u^\star}(x^{t+1},u^{t+1})
\notag\\
&\leq\frac{1}{2\gamma}\sqn{z^{t}-x^\star}+\frac{a_{t+1}^2}{2\tau}\sqn{v^{t}-u^{\star}}  -\frac{a_{t+1}^2\gamma}{2+2a_{t+1}\gamma \mu_\g}\sqn{\K^*(v^{t}-u^{\star})}\\
&\quad -\left(\frac{1}{4\gamma}-\frac{L_\ff}{2}\right)\sqn{z^{t+1}-z^t } 
- \frac{\tau}{2}\sqn{ \K z^{t+1}-\tilde{\nabla}\hc(v^{t+1})}\notag\\
&\quad+(a_{t+1}^2-a_{t+1})\mathcal{G}_{x^\star,u^\star}(x^{t},u^{t}) -\frac{\delta_{t+1}\gamma a_{t+1}^2 \lambda_{\min}(\K\K^*)}{4}\sqn{v^{t+1}-u^\star} +
\frac{a_{t+1} \mu^2_\g}{2L_\ff}\sqn{z^{t+1}-x^\star} .\notag
\end{align*}
Using the assumptions $\mu_\g\leq \frac{L_\ff}{2}$, $\gamma\leq \frac{1}{2L_\ff}$ and $\gamma\tau\|\K\|^2\leq 1+a_{t+1}\gamma\mu_\g$, this simplifies the inequality to 
\begin{align}
&\frac{2+a_{t+1}\gamma\mu_\g}{4\gamma}\sqn{z^{t+1}-x^\star}+\frac{2a_{t+1}^2+\delta_{t+1}\gamma \tau a_{t+1}^2 \lambda_{\min}(\K\K^*)}{4\tau}\sqn{v^{t+1}-u^{\star}} \notag\\
&\quad -\frac{a_{t+1}^2\gamma}{2+2a_{t+1}\gamma \mu_\g}\sqn{\K^*(v^{t+1}-u^{\star})}+a_{t+1}^2 \mathcal{G}_{x^\star,u^\star}(x^{t+1},u^{t+1})
\notag\\
&\leq\frac{2}{4\gamma}\sqn{z^{t}-x^\star}+\frac{2a_{t+1}^2}{4\tau}\sqn{v^{t}-u^{\star}}  -\frac{a_{t+1}^2\gamma}{2+2a_{t+1}\gamma \mu_\g}\sqn{\K^*(v^{t}-u^{\star})}\label{eqq27}\\
&\quad+(a_{t+1}^2-a_{t+1})\mathcal{G}_{x^\star,u^\star}(x^{t},u^{t})- \frac{\tau}{2}\sqn{ \K z^{t+1}-\tilde{\nabla}\hc(v^{t+1})}.\notag
\end{align}
This yields the claimed Lyapunov inequality. 
Finally, 
$\frac{a_t^2}{2\tau}(1-\nu)\sqn{v^{t}-u^{\star}} \leq \mathcal{E}^t_{x^\star}$, 
where $\nu=\gamma\tau\|\K\|^2\leq 1$. 
 If $\nu<1$, this implies $\sqn{v^{t}-u^{\star}}=\mathcal{O}(1/a_t^2)=\mathcal{O}(1/t^2)$; also, $\Q^t_{\mathcal{U}}$ (defined in \eqref{eqblo}) is positive definite, so $\langle \cdot,\cdot \rangle_{\Q^t}$ is an inner product. 
  If $\nu=1$, since $a_t\rightarrow \infty$, for sufficiently large $t$ we have
$\frac{ a_{t} \lambda_{\min}(\K\K^*)}{8  L_\ff}
\sqn{v^{t}-u^{\star}}\leq \mathcal{E}^t_{x^\star}$, which yields $\sqn{v^{t}-u^{\star}}=\mathcal{O}(1/t)$.
\end{proof}

\begin{corollary}[Accelerated linear convergence of \algn{APAPC} with $\mu_\g>0$ and  $\lambda_{\min}(\K\K^*) >0$]\label{corcase2b}
Under the conditions of Theorem \ref{theoinj}, suppose additionally that
$ \mu_\g >0$. Then the solution $(x^\star,u^\star)$ of \eqref{eq2} is unique.
 In \algn{APAPC}, 
choose $\gamma\in\left[\frac{1}{2L_\ff}\sqrt{\frac{\lambda_{\min}(\K\K^*)}{\|\K\|^2}},\frac{1}{2L_\ff}\right]$,  set $\tau=\frac{1}{\gamma\|\K\|^2}$, $a_0=a_1=1$, let
$a_\sharp\eqdef\max\left(\frac{1}{\gamma}\sqrt{\frac{\lambda_{\min}(\K\K^*)}{2\mu_\g L_\ff \|\K\|^2}},1\right)$, 
  and 
\begin{equation*}
a_{t+1}\coloneqq \min\left(a_t{\textstyle \sqrt{1+\frac{\lambda_{\min}(\K\K^*)}{4 \|\K\|^2}}},{\textstyle \sqrt{a_t^2+a_{t}\frac{\lambda_{\min}(\K\K^*)}{4\gamma L_\ff \|\K\|^2}},\frac{1+\sqrt{1+4a_t^2}}{2}},a_\sharp
\right),\quad\forall t\geq 1.
\end{equation*}
 Let $T\geq 1$ such that $a_T=a_\sharp$. Then 
\algn{APAPC} achieves accelerated linear convergence:
\begin{equation*}
\mathcal{E}^{t+1}_{x^\star} \leq \max\left(
\frac{1}{1+\sqrt{\frac{\mu_\g \lambda_{\min}(\K\K^*)}{8 L_\ff \|\K\|^2}}},
\frac{1}{1+\frac{\lambda_{\min}(\K\K^*)}{4\|\K\|^2}}\right)
\mathcal{E}^t_{x^\star} ,\quad\forall t\geq T.
\end{equation*}
Consequently, 
its iteration complexity (to reach an accuracy $\mathcal{E}^t_{x^\star}
\leq \epsilon$) is
\begin{equation}\label{eqoprc2}
\mathcal{O}\left(\left(\sqrt{\frac{L_\ff \|\K\|^2}{\mu_\g \lambda_{\min}(\K\K^*)}}+\frac{\|\K\|^2}{\lambda_{\min}(\K\K^*)}\right)\log \left(\frac{\mathcal{E}^0_{x^\star}}{\epsilon}\right)
\right).
\end{equation}
\end{corollary}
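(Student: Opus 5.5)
The plan is to specialize the per-iteration contraction of Theorem~\ref{theoinj} to the constant regime $a_t=a_\sharp$ for $t\geq T$, with $\nu=1$, and then bound each of its four factors by the claimed maximum. First, uniqueness. $\mu_\g>0$ makes $\Psi$ strongly convex, so $x^\star$ is unique. $\lambda_{\min}(\K\K^*)>0$ makes $\K^*$ injective, and the first line of \eqref{eq2} determines $\K^*u^\star=-\nabla\ff(x^\star)-\mu_\g x^\star$, so $u^\star$ is unique as well.

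Next I check that the hypotheses of Theorem~\ref{theoinj} hold. We have $\gamma\leq\frac{1}{2L_\ff}$ and $\gamma\tau\|\K\|^2=1\leq 1+a_0\gamma\mu_\g$. The sequence $a_t$ is nondecreasing, since each candidate in the min is at least $a_t$ as long as $a_t\leq a_\sharp$. The requirement $\mu_\g\leq L_\ff/2$ is assumed. I also note that $a_t$ actually reaches $a_\sharp$ in finitely many steps, because the unclipped sequence grows at least linearly; this justifies the existence of $T$. For $t\geq T$ we have $a_t=a_{t+1}=a_\sharp$, and the four factors of Theorem~\ref{theoinj} become:
\begin{enumerate}
\item $\frac{1}{1+\frac12 a_\sharp\gamma\mu_\g}$;
\item $\frac{1}{1+\frac14\lambda_{\min}(\K\K^*)/\|\K\|^2}$, since $\gamma\tau=1/\|\K\|^2$;
\item $\frac{1}{1+\frac{\lambda_{\min}(\K\K^*)}{4\gamma L_\ff a_\sharp\|\K\|^2}}$;
\item $1-\frac1{a_\sharp}\leq\frac{1}{1+1/a_\sharp}$.
\end{enumerate}
Factor 2 is exactly the second term of the claimed maximum.

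The key step is balancing factors 1 and 3 through the choice of $a_\sharp$. Equating $\frac12 a\gamma\mu_\g=\frac{\lambda_{\min}}{4\gamma L_\ff a\|\K\|^2}$ gives $a^2=\frac{\lambda_{\min}}{2\gamma^2\mu_\g L_\ff\|\K\|^2}$, which is precisely the stated $a_\sharp$ when the max with 1 is inactive. At this value the common gain is
\[
\frac12 a_\sharp\gamma\mu_\g=\sqrt{\frac{\mu_\g\lambda_{\min}(\K\K^*)}{8L_\ff\|\K\|^2}},
\]
which matches the first claimed term; notably, $\gamma$ cancels. If instead $a_\sharp=1$, I would bound factors 1 and 3 separately using $\gamma\geq\frac{1}{2L_\ff}\sqrt{\lambda_{\min}/\|\K\|^2}$, $\mu_\g\leq L_\ff/2$, and the fact that the max in $a_\sharp$ is attained at 1. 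In that case the unclipped expression is at most 1, so the factor-3 gain is at least the balanced value, and the factor-1 gain should still compare favorably; I expect a short computation to settle this.

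The main obstacle is factor 4: I need $\frac1{a_\sharp}\geq\sqrt{\frac{\mu_\g\lambda_{\min}}{8L_\ff\|\K\|^2}}$, or else domination by the second term. The lower bound on $\gamma$ is exactly what makes this work. From $\gamma\geq\frac{1}{2L_\ff}\sqrt{\lambda_{\min}/\|\K\|^2}$ we get $a_\sharp\leq\sqrt{\frac{2L_\ff}{\mu_\g}}\left(\frac{\lambda_{\min}}{\|\K\|^2}\right)^{1/4}$, so
\[
\frac{1}{a_\sharp}\geq\sqrt{\frac{\mu_\g}{2L_\ff}}\left(\frac{\|\K\|^2}{\lambda_{\min}}\right)^{1/4}\geq\sqrt{\frac{\mu_\g\lambda_{\min}}{8L_\ff\|\K\|^2}},
\]
using $\lambda_{\min}\leq\|\K\|^2$. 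If $a_\sharp=1$ the bound holds trivially because $\mu_\g\leq L_\ff$. The complexity \eqref{eqoprc2} then follows from $\log(1+x)^{-1}\approx x$ together with the transient phase: by the growth estimate of Theorem~\ref{theoinj}, $T=\mathcal{O}(a_\sharp\cdot\max(1,4\gamma L_\ff\|\K\|^2/\lambda_{\min}))$ iterations, which is within the stated bound, and $\mathcal{E}^T\leq\mathcal{E}^0$ over that phase.
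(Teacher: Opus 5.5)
Your route is the paper's: freeze $a_t=a_{t+1}=a_\sharp$ in the contraction factor of Theorem~\ref{theoinj}, balance factors 1 and 3 through $a_\sharp$, and use the lower bound on $\gamma$ for factor 4. However, the subcase $a_\sharp=1$ goes wrong exactly where you are confident.

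Write $A$ for $\frac{1}{\gamma}\sqrt{\lambda_{\min}(\K\K^*)/(2\mu_\g L_\ff\|\K\|^2)}$, so $a_\sharp=\max(A,1)$. The factor-3 gain $\frac{\lambda_{\min}(\K\K^*)}{4\gamma L_\ff a\|\K\|^2}$ is \emph{decreasing} in $a$. So when $a_\sharp=1\ge A$ it is at most, not at least, the balanced value $\sqrt{\mu_\g\lambda_{\min}(\K\K^*)/(8L_\ff\|\K\|^2)}$. For example, $L_\ff=\|\K\|=1$, $\mu_\g=\frac12$, $\lambda_{\min}(\K\K^*)=0.01$, $\gamma=\frac12$ is admissible; it gives $A=0.2$ and a factor-3 gain of $0.005<0.025$. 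So factor 3 is then \emph{not} below the first term of the claimed maximum. None of the facts you list (the lower bound on $\gamma$, $\mu_\g\le L_\ff/2$, $A\le 1$) can rescue it, since they all point the wrong way.

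What is needed is the \emph{upper} bound $\gamma\le\frac{1}{2L_\ff}$. It gives a gain $\ge\frac{\lambda_{\min}(\K\K^*)}{2\|\K\|^2}\ge\frac{\lambda_{\min}(\K\K^*)}{4\|\K\|^2}$, so factor 3 is dominated by factor 2, the \emph{second} term of the maximum. Conversely, factor 1, which you left to ``a short computation'', is the automatic one. Its gain $\frac12 a\gamma\mu_\g$ is increasing in $a$ and $a_\sharp\ge A$ always, so $\frac12 a_\sharp\gamma\mu_\g\ge\frac12 A\gamma\mu_\g$, which equals the balanced value. This is how the paper avoids any case split: factor 1 is handled via $a_\sharp\ge A$. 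Factor 3 is written, using $1/a_\sharp=\min(1/A,1)$, as the maximum of the balanced term and the $a_\sharp=1$ term, and the latter is dominated by factor 2.

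Second, in factor 4 your intermediate bound $a_\sharp\le\sqrt{2L_\ff/\mu_\g}\,\bigl(\lambda_{\min}(\K\K^*)/\|\K\|^2\bigr)^{1/4}$ is false whenever $\lambda_{\min}(\K\K^*)<\|\K\|^2$. At the left endpoint $\gamma=\frac{1}{2L_\ff}\sqrt{\lambda_{\min}(\K\K^*)/\|\K\|^2}$ one gets $A=\sqrt{2L_\ff/\mu_\g}$ exactly. The correct consequence of the lower bound on $\gamma$ is $A\le\sqrt{2L_\ff/\mu_\g}$. Hence $1/a_\sharp\ge\sqrt{\mu_\g/(2L_\ff)}$, which holds trivially when $a_\sharp=1$. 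This still dominates $\sqrt{\mu_\g\lambda_{\min}(\K\K^*)/(8L_\ff\|\K\|^2)}$ because $\lambda_{\min}(\K\K^*)\le\|\K\|^2$. So your conclusion for factor 4 stands once this line is repaired.

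Your uniqueness argument is correct, as is your check that $a_t$ is nondecreasing and reaches $a_\sharp$ in finitely many steps; both make explicit points the paper takes for granted.
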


\begin{proof}
We choose $\tau=\frac{1}{\gamma\|\K\|^2}$ and seek $a_\sharp\geq 1$ and $\gamma\leq \frac{1}{2L_\ff}$ that minimize the maximum of the four terms: (i) $\frac{1}{1+\frac{1}{2}a_\sharp \gamma\mu_\g}$, (ii) $\frac{1}{1+\frac{ \lambda_{\min}(\K\K^*)}{4\gamma L_\ff a_\sharp \|\K\|^2} }$, (iii) $1-\frac{1}{a_\sharp}\leq \frac{1}{1+\frac{1}{a_\sharp}}$, (iv) $\frac{1}{1+\frac{\lambda_{\min}(\K\K^*)}{4\|\K\|^2}}$.  First, we rewrite (ii) as 
$\frac{1}{1+\frac{ \lambda_{\min}(\K\K^*)}{4\gamma L_\ff a_\sharp \|\K\|^2} }=\max\left(\frac{1}{1+\sqrt{\frac{\mu_\g\lambda_{\min}(\K\K^*)}{8L_\ff\|\K\|^2}}},
\frac{1}{1+\frac{ \lambda_{\min}(\K\K^*)}{4\gamma L_\ff  \|\K\|^2} }\right)
$. Since $\gamma\le \frac{1}{2L_\ff}$, the second term is dominated by (iv). Moreover, 
since $\frac{1}{a_\sharp}=\min\left(\gamma\sqrt{\frac{2\mu_\g L_\ff \|\K\|^2}{\lambda_{\min}(\K\K^*)}},1\right)\ge \sqrt{\frac{\mu_\g  }{2 L_\ff}}$, 
 (ii) dominates (iii). 
Finally, for  (i), since  $a_\sharp\gamma\ge \sqrt{\frac{\lambda_{\min}(\K\K^*)}{2\mu_\g L_\ff \|\K\|^2}}$, it follows that  $\frac{1}{1+\frac{1}{2}a_\sharp \gamma\mu_\g}\leq \frac{1}{1+\frac{1}{2}\sqrt{\frac{\mu_\g\lambda_{\min}(\K\K^*)}{2 L_\ff \|\K\|^2}}}$, which is dominated by (ii).  This yields the stated contraction factor.
\end{proof}

The complexity of \algn{APAPC} in \eqref{eqoprc2} improves upon that of \algn{PAPC}, which, with appropriately chosen stepsizes, scales as $\frac{L_\ff}{\mu_\g}+\frac{\|\K\|^2}{\lambda_{\min}(\K\K^*)}$.

\begin{remark}\label{remkid}
Consider the case where $\K=\Id$ and $\mu_\g=0$, which implies $\|\K\|=\lambda_{\min}(\K\K^*)=1$. By setting $\tau=1/\gamma$ in \algn{APAPC} and invoking the Moreau identity, the dual variable $v^t$ cancels out and the primal update collapses to
\begin{align*}
z^{t+1}&=z^t-a_{t+1}\gamma\nabla \ff(y^t)-a_{t+1}\gamma \mathrm{prox}_{\frac{1}{a_{t+1}\gamma} \hc} \left(v^t+{\textstyle\frac{1}{a_{t+1}\gamma}} \big(z^t-a_{t+1}\gamma\nabla \ff(y^t)-a_{t+1}\gamma v^t\big)\right)\\
&=\mathrm{prox}_{a_{t+1}\gamma \h}\left(z^t-a_{t+1}\gamma\nabla \ff(y^t)\right).
\end{align*}
Thus, \algn{APAPC} reduces exactly to \algn{APGD}, with $\h$ playing the role of $\g$. Consequently, Theorem \ref{theoinj} and Corollary \ref{corcase2b} consistently recover the accelerated complexity established in Section \ref{secne} for minimizing $\ff+\h$ via \algn{APGD}.
\end{remark}

\subsection{Accelerated Convergence for Linearly Constrained Problems}\label{seclc}

Let $b\in \mathrm{ran}(\K)$, 
the range of $\K$. In this section, we consider the linearly constrained problem
\begin{equation}
\minimize_{x\in\mathcal{X}} \; \ff(x)+\frac{\mu_\g}{2}\|x\|^2\quad\mbox{s.t.}\ \K x=b.\label{eq1lc}
\end{equation}
This is a special case of Problem \eqref{eq1c} with $\h(u)=(0$ if $u=b$, $+\infty$ otherwise$)$. Its conjugate is $\hc(u)=\langle u,b\rangle$.
The associated dual problem is therefore
\begin{equation}
\minimize_{u\in\mathcal{U}} \; \left(\ff+\frac{\mu_\g}{2}\|\cdot\|^2\right)^*(-\K^*u)+\langle u,b\rangle. \label{eq1dlc}
\end{equation}
We define $\lambda_{\min}^+(\K\K^*)$ as the lower bound of the spectrum of $\K\K^*$ excluding zero; that is,
\begin{equation*}
\lambda^+_{\min}(\K\K^*) \coloneqq \inf_{u\in\overline{\mathrm{ran}(\K)}, u \neq 0} \frac{\|\K^*u\|^2}{\|u\|^2},
\end{equation*}
and we assume $\lambda^+_{\min}(\K\K^*) >0$ (this implies that $\mathrm{ran}(\K)$ is closed). This condition is only a safeguard for the infinite-dimensional case: if $\mathcal{U}$ has finite dimension, $\lambda^+_{\min}(\K\K^*)$ is the smallest nonzero eigenvalue of $\K\K^*$, so no additional assumption on $\K$ is required.
Note that $\lambda_{\min}(\K\K^*)$, defined in \eqref{eqlam1}, may still be zero. 

In \algn{APAPC}, we assume that $u^0=v^0 \in\mathrm{ran}(\K)$. Step \eqref{eqapapcc2k} simplifies to
\begin{equation*}
v^{t+1}\coloneqq v^t+{\textstyle\frac{\tau}{a_{t+1}}} (\K \hat{z}^t-b).
\end{equation*}
Therefore, by induction, $u^t,v^t \in\mathrm{ran}(\K)$ for every $t\geq 0$. 
This key property allows us to reuse the analysis of Section \ref{secinj}, with the improvement that $\lambda_{\min}(\K\K^*)$ can be replaced by $\lambda^+_{\min}(\K\K^*)$ in the crucial inequality \eqref{eqlowb}.
Equivalently, the algorithm can be viewed as operating on the real Hilbert space $\widetilde{\mathcal{U}}\coloneqq \mathrm{ran}(\K)$; redefining $\K$ as a mapping from $\mathcal{X}$ to $\widetilde{\mathcal{U}}$ leads precisely to the definition of $\lambda^+_{\min}(\K\K^*)$. This property is specific to the linearly constrained setting, where $\mathrm{prox}_{\hc}$ maps $\mathrm{ran}(\K)$ into itself.

Accordingly, the results of Section \ref{secinj} extend directly, with $\lambda_{\min}(\K\K^*)$ replaced by $\lambda^+_{\min}(\K\K^*)$. Although the dual problem \eqref{eq1dlc} is not necessarily strongly convex on $\mathcal{U}$, its restriction to $\mathrm{ran}(\K)$ is strongly convex. We therefore define $u^\star$ as the unique solution of \eqref{eq1dlc} in $\mathrm{ran}(\K)$. The optimality conditions are
\begin{equation}
\left\{\begin{array}{l}
0 \in  \nabla \ff(x^\star)+\mu_\g x^\star+\K^*u^\star,\\
u^\star\in \mathrm{ran}(\K),\\
\K x^\star=b,\end{array}\right.\label{eq2lc}
\end{equation}
and we assume that a primal solution $x^\star$ exists. 
The Lagrangian gap satisfies $\mathcal{G}_{x^\star,u^\star}(x,u)=(\ff+\g)(x)-(\ff+\g)(x^\star)+\langle Kx-b,u^\star\rangle=(\ff+\g)(x)-(\ff+\g)(x^\star)-\langle x-x^\star,\nabla \ff (x^\star) + \mu_\g x^\star\rangle$ for every $x\in\mathcal{X}$ and $u\in \mathrm{ran}(\K)$.
Since this quantity is independent of $u$ and $x^\star$, we denote it by $\mathcal{G}(x)$ in this section. Consequently, the analysis involves only the variable $v^t$, and $u^t$ can be ignored. 
We now state the main result.

\begin{theorem}[Accelerated convergence of \algn{APAPC} for solving \eqref{eq2lc} with $\lambda^+_{\min}(\K\K^*) >0$]\label{theolc}
Suppose that $\lambda^+_{\min}(\K\K^*) >0$  and $\mu_\g\leq \frac{L_\ff}{2}$, and 
let $(x^\star,u^\star)$  be a solution of \eqref{eq2lc}.  In \algn{APAPC}, 
define the Lyapunov function 
\begin{align*}
\mathcal{E}^t_{x^\star}&\coloneqq \frac{2+a_{t}\gamma\mu_\g}{4\gamma}\sqn{z^{t}-x^\star}+
\frac{2a_{t}^2+\min\big(\frac{1}{2},\frac{1}{2a_{t}\gamma L_\ff}\big)
\gamma \tau a_{t}^2 \lambda^+_{\min}(\K\K^*)}{4\tau}
\sqn{v^{t}-u^{\star}}  \\
&\quad -\frac{a_{t}^2\gamma}{2+2a_{t}\gamma \mu_\g}\sqn{\K^*(v^{t}-u^{\star})}+a_{t}^2 
\mathcal{G}(x^t)+\frac{a_{t}\tau}{2}\sqn{\K x^{t}-b},\quad\forall t\geq 0.
\end{align*}
Assume that $u^0=v^0\in\mathrm{ran}(\K)$, 
$\gamma\leq \frac{1}{2 L_\ff}$, $a_0>0$, $\gamma\tau\|\K\|^2\leq 1+a_0\gamma\mu_\g$, and $a_{t+1}\geq a_t$ for every $t\geq 0$. Then 
\begin{equation*}
\mathcal{E}^{t+1}_{x^\star} \leq \max\left(\frac{1}{1+\frac{1}{2}a_{t}\gamma\mu_\g},\frac{a_{t+1}^2}{a_t^2+\frac{1}{4} a_{t}^2\gamma \tau \lambda^+_{\min}(\K\K^*)},\frac{a_{t+1}^2}{a_t^2+\frac{1}{4\gamma L_\ff} a_{t}\gamma\tau  \lambda^+_{\min}(\K\K^*)},
\frac{a_{t+1}^2-a_{t+1}}{a_t^2}\right)\mathcal{E}^t_{x^\star},
\end{equation*}
for every $t\geq 0$. Consequently, by choosing $\tau=\frac{\nu}{\gamma\|\K\|^2}$ for some $\nu\in (0,1]$, $a_0=a_1=1$, and 
\begin{equation*}
a_{t+1}\coloneqq \min\left(a_t{\textstyle \sqrt{1+\frac{\nu\lambda^+_{\min}(\K\K^*)}{4 \|\K\|^2}}},{\textstyle \sqrt{a_t^2+a_{t}\frac{\nu\lambda^+_{\min}(\K\K^*)}{4\gamma L_\ff \|\K\|^2}},\frac{1+\sqrt{1+4a_t^2}}{2}}
\right),\quad\forall t\geq 1,
\end{equation*}
which grows as $a_t=\Omega\left( \frac{t}{2}\min\left(1,\frac{\nu\lambda^+_{\min}(\K\K^*)}{4\gamma L_\ff \|\K\|^2}\right)\right)$, we obtain
\begin{equation*}
\mathcal{E}^{t}_{x^\star} \leq \mathcal{E}^0_{x^\star} ,\quad\forall t\geq 0.
\end{equation*}
Therefore, for every $t\geq 0$,
\begin{equation*}
\mathcal{G}(x^t)\leq \frac{\mathcal{E}^0_{x^\star} }{a_t^2} =\mathcal{O}\left(\frac{1}{t^2}\right)
,\quad \sqn{\K x^{t}-b}\leq \frac{2\mathcal{E}^0_{x^\star} }{a_t \tau}= 
\mathcal{O}\left(\frac{1}{t}\right).
\end{equation*}
Moreover, if $\nu<1$, then $\sqn{v^{t}-u^{\star}}=\mathcal{O}(1/t^2)$; if $\nu=1$, then $\sqn{v^{t}-u^{\star}}=\mathcal{O}(1/t)$.
\end{theorem}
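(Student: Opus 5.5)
The plan is to reproduce the proof of Theorem~\ref{theoinj} almost verbatim, working on the Hilbert space $\widetilde{\mathcal{U}}=\mathrm{ran}(\K)$. The one genuinely new ingredient is the extra term $\frac{a_t\tau}{2}\sqn{\K x^t-b}$ in the Lyapunov function.

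First, I check that all dual quantities lie in $\mathrm{ran}(\K)$. Since $v^0\in\mathrm{ran}(\K)$, $b\in\mathrm{ran}(\K)$ and $v^{t+1}=v^t+\frac{\tau}{a_{t+1}}(\K\hat z^t-b)$, induction gives $v^t\in\mathrm{ran}(\K)$ for all $t$. Also $u^\star\in\mathrm{ran}(\K)$, so $v^{t+1}-u^\star\in\mathrm{ran}(\K)$. Then the key inequality \eqref{eqlowb} holds with $\lambda^+_{\min}(\K\K^*)$ in place of $\lambda_{\min}(\K\K^*)$. With this substitution, the chain of inequalities leading from \eqref{eqinea1} to \eqref{eqq27} goes through unchanged. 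Here $\mu_\hc=0$, since $\hc$ is linear, and $D_\hc\equiv 0$ because $\tilde\nabla\hc\equiv b=\K x^\star$. The gap therefore reduces to $\mathcal{G}(x)=(D_\ff+D_\g)(x)$.

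Second, I exploit the residual term $-\frac{\tau}{2}\sqn{\K z^{t+1}-\tilde\nabla\hc(v^{t+1})}=-\frac{\tau}{2}\sqn{\K z^{t+1}-b}$ in \eqref{eqq27}, which was discarded in Theorem~\ref{theoinj}. Step~\eqref{eqapapcc4k} gives $\K x^{t+1}-b=(1-\frac{1}{a_{t+1}})(\K x^t-b)+\frac{1}{a_{t+1}}(\K z^{t+1}-b)$. By convexity of $\sqn{\cdot}$,
\[
a_{t+1}\sqn{\K x^{t+1}-b}\le (a_{t+1}-1)\sqn{\K x^t-b}+\sqn{\K z^{t+1}-b}.
\]
Multiplying by $\frac{\tau}{2}$ shows that $-\frac{\tau}{2}\sqn{\K z^{t+1}-b}\le -\frac{a_{t+1}\tau}{2}\sqn{\K x^{t+1}-b}+\frac{(a_{t+1}-1)\tau}{2}\sqn{\K x^t-b}$. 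Moving the first term to the left side, the left side becomes $\mathcal{E}^{t+1}_{x^\star}$, up to the term $\frac{\delta_{t+1}\ldots}{4\tau}$, which is handled as before.

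Third, I compare each right-hand coefficient with the matching coefficient in $\mathcal{E}^t_{x^\star}$, exactly as in Theorem~\ref{theoinj}. This produces the first three ratios and $\frac{a^2_{t+1}-a_{t+1}}{a_t^2}$. For the new term I need $\frac{a_{t+1}-1}{a_t}$ to be at most the stated maximum. This holds because $\frac{a_{t+1}-1}{a_t}\le \frac{a_{t+1}^2-a_{t+1}}{a_t^2}$, which is equivalent to $a_t\le a_{t+1}$ when $a_{t+1}>1$, and is trivial when $a_{t+1}=1$. Monotonicity of $a_t$ is also used, as before, to replace $\delta_{t+1}$ and the $\K^*$ term with their $a_t$ versions.

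Finally, the parameter choice makes every ratio at most $1$, so $\mathcal{E}^t_{x^\star}\le\mathcal{E}^0_{x^\star}$. The growth rate of $a_t$ and the bounds on $\mathcal{G}$, on $\sqn{\K x^t-b}$ (read off from the new term) and on $\sqn{v^t-u^\star}$ then follow as in Theorem~\ref{theoinj}.

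The main obstacle is the bookkeeping. I must make sure the residual term was not already consumed elsewhere in \eqref{eqq27}, and that the $\delta$-term monotonicity argument, with $\delta_t\ge\delta_{t+1}$ and $a_t\le a_{t+1}$, is stated correctly when the coefficients are shifted from index $t+1$ to index $t$.
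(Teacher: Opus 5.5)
Your route is the paper's route. You work on $\mathrm{ran}(\K)$ so that \eqref{eqq27} holds with $\lambda^+_{\min}(\K\K^*)$, and you keep the residual $-\frac{\tau}{2}\sqn{\K z^{t+1}-b}$. Jensen on Step~\ref{eqapapcc4k} turns it into $-\frac{a_{t+1}\tau}{2}\sqn{\K x^{t+1}-b}+\frac{(a_{t+1}-1)\tau}{2}\sqn{\K x^{t}-b}$, and you absorb the last term with $\frac{a_{t+1}-1}{a_t}\le\frac{a_{t+1}^2-a_{t+1}}{a_t^2}$. All of this is correct. No $\delta$ has to be shifted: the left side of \eqref{eqq27} plus the new term is exactly $\mathcal{E}^{t+1}_{x^\star}$, and the right side contains no $\delta$. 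Monotonicity $a_{t+1}\ge a_t$ enters through the inequality above and through $\gamma\tau\|\K\|^2\le 1+a_{t+1}\gamma\mu_\g$, not through the $\K^*$ coefficient.

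The real gap is the step you defer to ``as before'', which the paper also leaves implicit: comparing the dual part of the right side of \eqref{eqq27} with the dual part of $\mathcal{E}^t_{x^\star}$. Set $w=v^t-u^\star$, $\kappa=\gamma\mu_\g$, $\delta_t=\min\big(\frac12,\frac{1}{2a_t\gamma L_\ff}\big)$ and $\epsilon_t=\frac12\delta_t\gamma\tau\lambda^+_{\min}(\K\K^*)$. The two dual parts are $\frac{a_{t+1}^2}{2\tau}\big(\sqn{w}-\frac{\gamma\tau}{1+a_{t+1}\kappa}\sqn{\K^*w}\big)$ and $\frac{a_{t}^2}{2\tau}\big((1+\epsilon_t)\sqn{w}-\frac{\gamma\tau}{1+a_{t}\kappa}\sqn{\K^*w}\big)$. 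Monotonicity works against you here, because $a_{t+1}\ge a_t$ shrinks the subtracted $\K^*$ term on the right. A direct computation shows that the first form is at most $\frac{a_{t+1}^2}{a_t^2(1+\epsilon_t)}$ times the second for all $w$ if and only if $(a_{t+1}-a_t)\kappa\le\epsilon_t(1+a_t\kappa)$. That factor is exactly the larger of the second and third ratios in the statement.

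This condition is trivial for $\mu_\g=0$. It also holds under the parameter choice of the ``consequently'' part: the first two caps say $a_{t+1}^2\le(1+\epsilon_t)a_t^2$, hence $a_{t+1}-a_t\le a_t\epsilon_t/2$. So $\mathcal{E}^t_{x^\star}\le\mathcal{E}^0_{x^\star}$ and all the stated rates survive.

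For $\mu_\g>0$ and an arbitrary nondecreasing $(a_t)$, however, the one-step recursion is false. Take $\ff=\frac12\sqn{\cdot}$ on $\mathbb{R}^2$, $\mu_\g=0.02$, $\K=\mathrm{diag}(1,10^{-3/2})$ and $b=0$, so that $x^\star=u^\star=0$. Choose $\gamma=\frac12$, $\tau=2$, $a_0=1$, $a_1=100$, $x^0=z^0=0$ and $v^0=e_1$. Then $\mathcal{E}^0_{x^\star}\approx 2.5\cdot 10^{-3}$ and the maximal ratio is about $10^4$. One iteration gives $v^1=\frac12 e_1$ and $z^1=-12.5\,e_1$, so $\mathcal{E}^1_{x^\star}\ge\frac32\sqn{z^1}>234$. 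You therefore need to add the hypothesis $(a_{t+1}-a_t)\gamma\mu_\g\le\epsilon_t(1+a_t\gamma\mu_\g)$ to the general recursion, or restrict it to $\mu_\g=0$, and verify it explicitly rather than appeal to monotonicity.
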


\begin{proof}
The proof follows that of Theorem~\ref{theoinj}, with $\lambda_{\min}(\K\K^*)$ replaced by $\lambda^+_{\min}(\K\K^*)$. 
 Let $(x^\star,u^\star)$  be a solution of \eqref{eq2lc} and  $t\geq 0$. Starting from \eqref{eqq27} with $\tilde{\nabla}\hc(v^{t+1})=b$, we obtain 
 \begin{align}
&\frac{2+a_{t+1}\gamma\mu_\g}{4\gamma}\sqn{z^{t+1}-x^\star}+\frac{2a_{t+1}^2+\delta_{t+1}\gamma \tau a_{t+1}^2 \lambda^+_{\min}(\K\K^*)}{4\tau}\sqn{v^{t+1}-u^{\star}} \notag\\
&\quad -\frac{a_{t+1}^2\gamma}{2+2a_{t+1}\gamma \mu_\g}\sqn{\K^*(v^{t+1}-u^{\star})}+a_{t+1}^2 \mathcal{G}(x^{t+1})
\notag\\
&\leq\frac{2}{4\gamma}\sqn{z^{t}-x^\star}+\frac{2a_{t+1}^2}{4\tau}\sqn{v^{t}-u^{\star}}  -\frac{a_{t+1}^2\gamma}{2+2a_{t+1}\gamma \mu_\g}\sqn{\K^*(v^{t}-u^{\star})}\notag\\
&\quad+(a_{t+1}^2-a_{t+1})\mathcal{G}(x^{t})- \frac{\tau}{2}\sqn{ \K z^{t+1}-b}.\label{eq99}
\end{align}
Using Jensen’s inequality together with \eqref{eqala1p3}, we have
$\sqn{\K x^{t+1}-b}\leq \left(1-\frac{1}{a_{t+1}}\right) \sqn{\K x^{t}-b}+\frac{1}{a_{t+1}} \sqn{\K z^{t+1}-b}$. Therefore, $\sqn{\K z^{t+1}-b}\geq a_{t+1}\sqn{\K x^{t+1}-b}-(a_{t+1}-1)\sqn{\K x^{t}-b}$. Hence, 
\begin{align*}
&\frac{2+a_{t+1}\gamma\mu_\g}{4\gamma}\sqn{z^{t+1}-x^\star}+\frac{2a_{t+1}^2+\delta_{t+1}\gamma \tau a_{t+1}^2 \lambda^+_{\min}(\K\K^*)}{4\tau}\sqn{v^{t+1}-u^{\star}} \notag\\
&\quad -\frac{a_{t+1}^2\gamma}{2+2a_{t+1}\gamma \mu_\g}\sqn{\K^*(v^{t+1}-u^{\star})}+a_{t+1}^2 \mathcal{G}(x^{t+1})+\frac{a_{t+1}\tau}{2}\sqn{\K x^{t+1}-b} 
\notag\\
&\leq\frac{2}{4\gamma}\sqn{z^{t}-x^\star}+\frac{2a_{t+1}^2}{4\tau}\sqn{v^{t}-u^{\star}}  -\frac{a_{t+1}^2\gamma}{2+2a_{t+1}\gamma \mu_\g}\sqn{\K^*(v^{t}-u^{\star})}\\
&\quad+(a_{t+1}^2-a_{t+1})\mathcal{G}(x^{t})+\frac{(a_{t+1}-1)\tau}{2}\sqn{\K x^{t}-b}.
\end{align*}
Moreover, $\frac{a_{t+1}-1}{a_t}=\frac{a_{t+1}^2-a_{t+1}}{a_t a_{t+1}}\leq \frac{a_{t+1}^2-a_{t+1}}{a_t^2}$. This yields the claimed recursion.
\end{proof}

\begin{remark}
Under the conditions of Theorem \ref{theolc}, we have $\sqn{\K x^{t}-b}= \mathcal{O}(1/t)$, which is not accelerated. One way to obtain an accelerated rate is to modify the objective by introducing the penalty term $\tilde{\ff}=\ff+\eta\|\K\cdot-b\|^2$ for some $\eta>0$. In this case, a rate $\mathcal{O}(1/t^2)$ for the Lagrangian gap directly yields the same rate for $\eta\|\K x^{t}-b\|^2$.
\end{remark}

\begin{corollary}[Accelerated linear convergence of \algn{APAPC} on \eqref{eq2lc} with $\mu_\g>0$ and  $\lambda^+_{\min}(\K\K^*) >0$]\label{corcase3b}
Under the conditions of Theorem \ref{theolc}, suppose additionally that
$ \mu_\g >0$. Then the solution $(x^\star,u^\star)$ of \eqref{eq2lc} is unique. 
 In \algn{APAPC}, 
choose $\gamma\in\left[\frac{1}{2L_\ff}\sqrt{\frac{\lambda^+_{\min}(\K\K^*)}{\|\K\|^2}},\frac{1}{2L_\ff}\right]$,  $\tau=\frac{1}{\gamma\|\K\|^2}$, and $a_0=a_1=1$.  Define
$a_\sharp\eqdef\max\left(\frac{1}{\gamma}\sqrt{\frac{\lambda^+_{\min}(\K\K^*)}{2\mu_\g L_\ff \|\K\|^2}},1\right)$ 
  and choose
\begin{equation*}
a_{t+1}\coloneqq \min\left(a_t{\textstyle \sqrt{1+\frac{\lambda^+_{\min}(\K\K^*)}{4 \|\K\|^2}}},{\textstyle \sqrt{a_t^2+a_{t}\frac{\lambda^+_{\min}(\K\K^*)}{4\gamma L_\ff \|\K\|^2}},\frac{1+\sqrt{1+4a_t^2}}{2}},a_\sharp
\right),\quad\forall t\geq 1.
\end{equation*}
 Let $T\geq 1$ such that $a_T=a_\sharp$. Then 
\algn{APAPC} achieves accelerated linear convergence:
\begin{equation*}
\mathcal{E}^{t+1}_{x^\star} \leq \max\left(
\frac{1}{1+\sqrt{\frac{\mu_\g \lambda^+_{\min}(\K\K^*)}{ 8L_\ff \|\K\|^2}}},
\frac{1}{1+\frac{\lambda^+_{\min}(\K\K^*)}{4\|\K\|^2}}\right)
\mathcal{E}^t_{x^\star} ,\quad\forall t\geq T.
\end{equation*}
Consequently, 
its iteration complexity (to reach an accuracy $\mathcal{E}^t_{x^\star}
\leq \epsilon$) is
\begin{equation}\label{eqoprc3}
\mathcal{O}\left(\left(\sqrt{\frac{L_\ff \|\K\|^2}{\mu_\g \lambda^+_{\min}(\K\K^*)}}+\frac{\|\K\|^2}{\lambda^+_{\min}(\K\K^*)}\right)\log \left(\frac{\mathcal{E}^0_{x^\star}}{\epsilon}\right)
\right).
\end{equation}
\end{corollary}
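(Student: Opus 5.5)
The plan is to mirror the proof of Corollary~\ref{corcase2b} almost verbatim, with $\lambda_{\min}(\K\K^*)$ replaced by $\lambda^+_{\min}(\K\K^*)$. The starting point is the Lyapunov recursion of Theorem~\ref{theolc}, which is valid since $u^0=v^0\in\mathrm{ran}(\K)$ and all the other assumptions are included.

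\textbf{Uniqueness.} First I settle uniqueness. The primal objective $\ff+\frac{\mu_\g}{2}\|\cdot\|^2$ restricted to the affine set $\{\K x=b\}$ is $\mu_\g$-strongly convex, so $x^\star$ is unique. The dual $u^\star$ is unique in $\mathrm{ran}(\K)$, because $\K^*$ is injective on $\mathrm{ran}(\K)$ (as $\lambda^+_{\min}(\K\K^*)>0$). Hence $\K^*u^\star=-\nabla\ff(x^\star)-\mu_\g x^\star$ determines $u^\star$.

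\textbf{Admissibility of the parameters.} Next I check that the stated parameters satisfy the hypotheses of Theorem~\ref{theolc}:
\begin{itemize}
\item $\gamma\le\frac{1}{2L_\ff}$ holds by the chosen interval;
\item $\nu=\gamma\tau\|\K\|^2=1\le 1+a_0\gamma\mu_\g$;
\item $a_0=a_1=1$;
\item $(a_t)$ is nondecreasing, since each candidate in the $\min$ is at least $a_t$ as long as $a_t\le a_\sharp$, and $a_\sharp\ge 1$.
\end{itemize}
Also, $a_t$ reaches $a_\sharp$ in finitely many steps, because the uncapped sequence grows at least linearly. So $T$ exists, and $a_t=a_{t+1}=a_\sharp$ for all $t\ge T$.

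\textbf{Bounding the contraction factor.} For $t\ge T$, I would bound each of the four terms in the max of Theorem~\ref{theolc}, with $a_t=a_{t+1}=a_\sharp$ and $\gamma\tau=1/\|\K\|^2$. Writing $\lambda=\lambda^+_{\min}(\K\K^*)$ and $\rho_\star=\bigl(1+\sqrt{\frac{\mu_\g\lambda}{8L_\ff\|\K\|^2}}\bigr)^{-1}$, the four terms become:
\begin{itemize}
\item[(i)] $\bigl(1+\frac12 a_\sharp\gamma\mu_\g\bigr)^{-1}$;
\item[(ii)] $\bigl(1+\frac{\lambda}{4\gamma L_\ff a_\sharp\|\K\|^2}\bigr)^{-1}$;
\item[(iii)] $1-\frac1{a_\sharp}\le \bigl(1+\frac1{a_\sharp}\bigr)^{-1}$;
\item[(iv)] $\bigl(1+\frac{\lambda}{4\|\K\|^2}\bigr)^{-1}$.
\end{itemize}

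\textbf{Case analysis on $a_\sharp$.} The key step is splitting according to the max defining $a_\sharp$.
\begin{itemize}
\item If $a_\sharp=\frac1\gamma\sqrt{\frac{\lambda}{2\mu_\g L_\ff\|\K\|^2}}$, substituting into (ii) gives exactly $\rho_\star$. Substituting into (i) gives $\bigl(1+\frac12\sqrt{\frac{\mu_\g\lambda}{2L_\ff\|\K\|^2}}\bigr)^{-1}$, which equals $\rho_\star$ and so is dominated by it.
\item If $a_\sharp=1$, then (ii) becomes $\bigl(1+\frac{\lambda}{4\gamma L_\ff\|\K\|^2}\bigr)^{-1}\le$ (iv), using $\gamma\le\frac1{2L_\ff}$. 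Term (i) is also controlled, because $a_\sharp=1$ means $\gamma\ge\sqrt{\lambda/(2\mu_\g L_\ff\|\K\|^2)}$, so $\frac12\gamma\mu_\g\ge\sqrt{\frac{\mu_\g\lambda}{8L_\ff\|\K\|^2}}$.
\end{itemize}
For (iii), I use $\frac1{a_\sharp}\ge\min\bigl(\gamma\sqrt{\frac{2\mu_\g L_\ff\|\K\|^2}{\lambda}},1\bigr)$. The lower endpoint of the $\gamma$-interval, $\gamma\ge\frac1{2L_\ff}\sqrt{\lambda/\|\K\|^2}$, gives $\frac1{a_\sharp}\ge\min\bigl(\sqrt{\frac{\mu_\g}{2L_\ff}}\cdot\frac{\sqrt{\lambda}}{\|\K\|}\cdot\frac{\|\K\|}{\sqrt\lambda},1\bigr)$. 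Since $\lambda\le\|\K\|^2$ and $\mu_\g\le L_\ff/2$, this is at least $\sqrt{\frac{\mu_\g\lambda}{8L_\ff\|\K\|^2}}$. So (iii) is at most $\rho_\star$.

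\textbf{Main obstacle.} I expect the main obstacle to be exactly this bookkeeping: verifying that (iii) is dominated, which is where the lower bound on $\gamma$ enters. If the constants do not close, I would first fall back on $\mu_\g\le L_\ff/2$ and $\lambda\le\|\K\|^2$ to absorb factors of $2$.

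\textbf{Complexity.} Finally, iterating the contraction from $T$ and using $\mathcal{E}^T\le\mathcal{E}^0$ (the sublinear part of Theorem~\ref{theolc}) gives the linear rate. Bounding $T$ by the same order as the rate, since $a_t$ grows linearly up to $a_\sharp$, and using $\log(1+x)\gtrsim x$, yields \eqref{eqoprc3}.
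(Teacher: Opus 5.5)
Your proposal is correct and follows essentially the paper's route. The paper proves Corollary~\ref{corcase2b} by bounding the same four terms (i)--(iv), and states that the linearly constrained case carries over with $\lambda_{\min}(\K\K^*)$ replaced by $\lambda^+_{\min}(\K\K^*)$. Your case split on the two branches of $a_\sharp$ is exactly its rewriting of (ii) as $\max\bigl(\rho_\star,(1+\frac{\lambda}{4\gamma L_\ff\|\K\|^2})^{-1}\bigr)$, and (i) and (iii) are handled by the same inequalities. One small imprecision, which the paper's complexity claim also leaves implicit: $a_t$ need not grow linearly from the start. The multiplicative cap $a_t\sqrt{1+\lambda/(4\|\K\|^2)}$ can bind early, which adds a burn-in of up to order $\frac{\|\K\|^2}{\lambda}\log\frac{\|\K\|^2}{\lambda}$ before $a_T=a_\sharp$.
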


\begin{remark}
Chebyshev acceleration can further improve the complexity factor in \eqref{eqoprc3} to $\sqrt{\frac{L_\ff \|\K\|^2}{\mu_\g \lambda^+_{\min}(\K\K^*)}}$, which is optimal \citep{sal22}. This amounts to replacing the constraint $\K x=b$ in \eqref{eq1lc} by $\sqrt{P(\K^*\K)}x=\sqrt{P(\K^*\K)}x^\star$ for a suitably chosen polynomial $P$, effectively applying $\K^*\K$ multiple times per iteration.
\end{remark}

\algn{APAPC} can be applied to decentralized optimization; see \citet{sal22,xu20} for presentations and discussions of optimal algorithms in the strongly convex and general convex settings, respectively.

\subsection{Point Convergence of \algn{APAPC}}\label{secpointa}

In regimes where both the primal and dual problems are strongly convex, the linear convergence of \algn{APAPC} guarantees the strong convergence of the iterates to the unique saddle-point solution of \eqref{eq2}; that is, $\|u^t-u^\star\|\rightarrow 0$ and $\|x^t-x^\star\|\rightarrow 0$. In this section, we therefore focus on the sublinear regime where $\mu_\g=0$ and only the dual problem is assumed to be strongly convex. Under these conditions, our preceding analysis guarantees that the dual iterates converge strongly to the unique dual solution $u^\star$, but the convergence of the primal iterates remains to be established. To complete our analysis, we characterize the weak convergence of the primal iterates generated by \algn{APAPC}.

\begin{theorem}[Point Convergence of \algn{APAPC}]\label{theoconva}
Assume that $\g=0$ and that the following hold in \algn{APAPC}:

$\mathrm{(i)}$ 
$a_t \rightarrow \infty$ and  $\sum_{t\geq 1} \frac{1}{a_t}=\infty$.

$\mathrm{(ii)}$ The solution $u^\star$ of the dual problem \eqref{eq1cd} is unique, and $\|v^t-u^\star\|\rightarrow 0$. 

$\mathrm{(iii)}$ The sequence $(z^t)_{t \ge 0}$ is bounded.

$\mathrm{(iv)}$ $\mathcal{G}_{x^\star,u^\star}(x^t,u^t)\rightarrow 0$ for every saddle-point solution $(x^\star,u^\star)$ to \eqref{eq2}.

$\mathrm{(v)}$ $\big\|\K z^t - \tilde{\nabla}\hc (v^t)\big\|\rightarrow 0$.

\noindent Then, the following hold:

$\mathrm{(a)}$  $\|u^t-u^\star\|\rightarrow 0$ and $\|y^t-x^t\|=\mathcal{O}(1/a_t)$.

$\mathrm{(b)}$ For every $t\geq 0$, $\sqn{\nabla \ff(x^t)+\K^* u^\star}\le 2L_\ff\mathcal{G}_{x^\star,u^\star}(x^t,u^t)\to 0$.

$\mathrm{(c)}$ The sequence $(x^t)_{t \ge 0}$ is bounded, and all its weak cluster points are solutions to \eqref{eq1c}. 

$\mathrm{(d)}$ Every weak cluster point $z^\infty$ of $(z^t)_{t \ge 0}$ satisfies $\K z^\infty \in \partial \hc(u^\star)$.

\noindent In addition, assume the following:

$\mathrm{(vi)}$ For every saddle-point solution $(x^\star,u^\star)$ to \eqref{eq2}, there exists a Lyapunov function 

$\mathcal{E}^t_{x^\star}\coloneqq \frac{1}{2\gamma}\sqn{z^{t}-x^\star}+a_t^2\mathcal{G}_{x^\star,u^\star}(x^t,u^t)+\mathcal{R}^t$ for some dual term $\mathcal{R}^t\geq 0$ that does not depend on 

$x^\star$. Moreover, $(\mathcal{E}^t_{x^\star})_{t\ge 0}$ is nonincreasing. 

%TODO indentation faite à la main, voir selon format journal

$\mathrm{(vii)}$ For any pair of solutions $(\hat{x}^\star,u^\star)$ and $(\check{x}^\star,u^\star)$ to \eqref{eq2}, we have $\K\hat{x}^\star=\K \check{x}^\star$.

\noindent Then, there exists a solution $x^\star$ to \eqref{eq1c} such that $(x^t)_{t\geq 0}$ and $(y^t)_{t\geq 0}$ both converge weakly to $x^\star$. 
\end{theorem}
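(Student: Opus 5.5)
I would prove the first batch of claims (a)--(d) one by one from assumptions (i)--(v), and then copy the Opial-type argument of Theorem~\ref{theo1wc} for the final weak-convergence statement.

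For (a), the dual claim follows from Remark~\ref{rempolyak}: by (ii) $\|v^t-u^\star\|\to0$, and by (i) $\sum 1/a_t=\infty$, so $\|u^t-u^\star\|\to 0$. For the primal claim, $x^{t+1}$ is a convex combination of $x^t$ and $z^{t+1}$ and $x^0=z^0$. As in the proof of Theorem~\ref{theo1wc}, this gives $\|x^t\|\le \sup_k\|z^k\|=:M$, finite by (iii). Then $y^t-x^t=\frac{1}{a_{t+1}}(z^t-x^t)$ gives $\|y^t-x^t\|\le 2M/a_{t+1}$, and $a_{t+1}\ge a_t$ (or a bounded ratio) yields $\mathcal{O}(1/a_t)$. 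This also settles boundedness in (c).

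For (b), I would use the gap decomposition \eqref{eqgapw} with $\g=0$: $\mathcal{G}_{x^\star,u^\star}(x,u)=D_\ff(x)+D_\hc(u)\ge D_\ff(x)$. For an $L_\ff$-smooth convex function, $D_\ff(x)\ge\frac{1}{2L_\ff}\|\nabla\ff(x)-\nabla\ff(x^\star)\|^2$. The optimality condition gives $\nabla\ff(x^\star)=-\K^*u^\star$, so the claimed bound holds, and (iv) makes it tend to $0$.

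For (c), take a weak cluster point $x^\infty$ with $x^{t_j}\rightharpoonup x^\infty$. I need $-\K^*u^\star\in\partial\ff(x^\infty)$ and $\K x^\infty\in\partial\hc(u^\star)$, i.e.\ $(x^\infty,u^\star)$ solves \eqref{eq2}.
\begin{itemize}
\item For the first inclusion, $\nabla\ff(x^{t_j})\to-\K^*u^\star$ strongly by (b). Since $\nabla\ff$ is maximal monotone, its graph is sequentially closed in the weak-strong topology, which gives $\nabla\ff(x^\infty)=-\K^*u^\star$.
\item For the second inclusion, I would use weak lower semicontinuity of the gap. By (iv), $\mathcal{L}(x^{t_j},u^\star)-\mathcal{L}(x^\star,u^{t_j})\to0$, where both terms are nonnegative Bregman pieces. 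Hence $D_\ff(x^{t_j})\to0$, so $D_\ff(x^\infty)=0$ and $\ff(x^\infty)+\langle \K x^\infty,u^\star\rangle=\ff(x^\star)+\langle\K x^\star,u^\star\rangle$.
\end{itemize}
It remains to show $\h(\K x^\infty)$ is finite and optimal. Here I would use (v) with $\tilde\nabla\hc(v^t)\in\partial\hc(v^t)$. For any $u$,
\[
\hc(u)\ge\hc(v^t)+\langle \tilde\nabla\hc(v^t),u-v^t\rangle .
\]
I replace $\tilde\nabla\hc(v^t)$ by $\K z^t+o(1)$, and pass $\K z^t$ to $\K x^t$ through the averaging, which is the delicate point.

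So I expect the main obstacle to be transferring the inclusion from the $z$-sequence to $x$, because $x^t$ is an average of the $z^k$ and subgradient inclusions are not preserved under averaging. My plan is to prove (d) first. Along $z^{t_j}\rightharpoonup z^\infty$ we have $\tilde\nabla\hc(v^{t_j})\rightharpoonup \K z^\infty$ by (v), and $v^{t_j}\to u^\star$ strongly by (ii). Weak-strong closedness of the graph of $\partial\hc$ then gives $\K z^\infty\in\partial\hc(u^\star)$. For $x^t$, I would use instead that $\partial\hc(u^\star)$ is closed and convex. Then $\mathrm{dist}(\K z^t,\partial\hc(u^\star))\to0$, which I would try to obtain from (v), (ii) and a strong-convergence argument. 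Averaging with $\sum1/a_t=\infty$ would propagate this to $\K x^t$, and weak closedness of the convex set would conclude. If the distance claim fails, the fallback is the gap identity above combined with Fenchel--Young: $D_\hc(u^{t})\to0$ and $D_\ff(x^t)\to0$, together with $\Psi(x^t)$ controlled via the lower semicontinuity of $\h\circ\K$, should show $x^\infty$ minimizes $\Psi$.

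For the final weak-convergence claim, I would repeat the proof of Theorem~\ref{theo1wc}. Take two weak cluster points $\hat x^\star,\check x^\star$ of $(x^t)$; both are solutions by (c), paired with the same $u^\star$. Set $\mathcal{B}^t=2\gamma(\mathcal{E}^t_{\hat x^\star}-\mathcal{E}^t_{\check x^\star})$. Since $\mathcal{R}^t$ does not depend on $x^\star$, it cancels in the difference. The gap difference $a_t^2\big(\mathcal{G}_{\hat x^\star,u^\star}-\mathcal{G}_{\check x^\star,u^\star}\big)(x^t,u^t)$ must vanish. Using (vii) and $\nabla\ff(\hat x^\star)=\nabla\ff(\check x^\star)=-\K^*u^\star$, the difference equals $\ff(\check x^\star)-\ff(\hat x^\star)+\langle\K(\check x^\star-\hat x^\star),u^\star\rangle$. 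This is the difference of two optimal Lagrangian values, hence $0$. So $\mathcal{B}^t=\sqn{z^t-\hat x^\star}-\sqn{z^t-\check x^\star}$, and it converges by (vi). The relation $z^{t+1}=x^{t+1}+(a_{t+1}-1)(x^{t+1}-x^t)$ together with Lemma A.4 of \citet{bot25} yields $\mathcal{A}^t\to\mathcal{B}^\infty$, and the same contradiction forces $\hat x^\star=\check x^\star$. Finally $y^t\rightharpoonup x^\star$ follows from (a).
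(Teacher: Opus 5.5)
Your arguments for (a), (b), (d), the first inclusion in (c), and the final uniqueness step match the paper's. That uniqueness step covers the cancellation of $\mathcal{R}^t$, the vanishing of the gap difference via (vii), the relation between $\mathcal{A}^t$ and $\mathcal{B}^t$, and Lemma A.4 of \citet{bot25}.

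The genuine gap is the second inclusion in (c), $\K x^\infty\in\partial\hc(u^\star)$. You correctly identify it as the crux but do not close it. Your committed route needs $\mathrm{dist}(\K z^t,\partial\hc(u^\star))\to0$ in norm. However, (ii), (iii), (v) and (d) only give information about \emph{weak} cluster points, and in an infinite-dimensional Hilbert space this does not imply norm convergence to the set. For example, on $\ell^2$ take $\hc(u)=\sum_n\frac{n}{2}u_n^2$, which is proper, lsc and convex. Then $\partial\hc(0)=\{0\}$, while $v^t=e_t/t\to0$ and $e_t\in\partial\hc(v^t)$ stays at distance $1$ from $\partial\hc(0)$. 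So this route only works in finite dimension, via compactness. Your fallback fails as well. The Lagrangian gap never involves $\h(\K x^t)$, which can be $+\infty$ for every $t$; in the linearly constrained setting, for instance, only $\sqn{\K x^t-b}=\mathcal{O}(1/t)$ is known. Lower semicontinuity therefore gives no upper bound on $\Psi(x^\infty)$. Also, $D_{\hc}(u^t)\to0$ involves only $\K x^\star$, not $\K x^\infty$.

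The missing idea is to encode the inclusion through an inequality that is affine in the slot where $\K z^k$ enters, so that it survives both the averaging and the weak limit. The paper does this with the monotone characterization:
\begin{itemize}
\item For any $(u,w)\in\mathrm{gra}(\partial\hc)$, monotonicity at $(v^k,\tilde\nabla\hc(v^k))$ gives $\langle u^\star-u,\K z^k-w\rangle+e_k\ge0$, with $e_k\to0$ by (ii), (v) and boundedness.
\item Multiply by the weights $w_{k,t}=\frac{1}{a_k}\prod_{j=k+1}^t(1-\frac{1}{a_j})$, for which $x^t=\sum_{k=1}^t w_{k,t}z^k$, and sum. This gives $\langle u^\star-u,\K x^t-w\rangle+\sum_k w_{k,t}e_k\ge0$.
\item Since $\sum_j 1/a_j=\infty$ forces $w_{k,t}\to0$ for each fixed $k$, the Toeplitz theorem gives $\sum_k w_{k,t}e_k\to0$.
\item Pass to the weak limit along $x^{t_j}\rightharpoonup x^\infty$ and invoke maximal monotonicity to get $\K x^\infty\in\partial\hc(u^\star)$.
\end{itemize}
The subgradient inequality you wrote down and then set aside works just as well. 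It reads $\hc(u)\ge\hc(v^k)+\langle\K z^k,u-u^\star\rangle+\epsilon_k$ with $\epsilon_k\to0$. Average it with the same weights, use Jensen ($\hc(u^t)\le\sum_k w_{k,t}\hc(v^k)$), and use lower semicontinuity of $\hc$ at $u^t\to u^\star$. This gives $\hc(u)\ge\hc(u^\star)+\langle\K x^\infty,u-u^\star\rangle$ for all $u$. Subgradient inclusions are not preserved under averaging, but the inequalities defining them are, once the dependence on $\K z^k$ is linear.
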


\begin{proof}
Assumptions $\mathrm{(i)}$ and $\mathrm{(ii)}$ imply that $\|u^t-u^\star\|\rightarrow 0$, as established in Remark \ref{rempolyak}. Since $(z^t)_{t\geq 0}$ is assumed to be bounded, there exists
$M>0$ such that $\|z^t\|\leq M$ for every $t\geq 0$. Consequently,  $(x^t)_{t\geq 0}$ is also bounded, because $x^0=z^0$ and $\|x^{t+1}\|\leq \left(1-\frac{1}{a_{t+1}}\right) \|x^{t}\|+\frac{1}{a_{t+1}} \|z^{t+1}\|\leq \max(\|x^t\|,M)\leq \cdots \leq \max(\|x^0\|,M)=M$. Therefore, $(x^t)_{t\geq 0}$ %admits 
has at least one weak cluster point. Furthermore, from the update in Step \ref{eqapapcs1k} of \algn{APAPC}, we have $\|y^t-x^t\|=\frac{1}{a_{t+1}}\|x^t-z^t\|$. Since $(x^t-z^t)_{t\geq 0}$ is bounded, this yields $\|y^t-x^t\|=\mathcal{O}(1/a_t)$. 

Let $z^\infty$ be a weak cluster point of $(z^t)_{t \ge 0}$, meaning there exists a subsequence $(z^{t_j})_{j \ge 0}$ such that $z^{t_j} \rightharpoonup z^\infty$, where $\rightharpoonup$ denotes weak convergence. Because $\K$ is bounded, it is weakly continuous, which implies $\K z^{t_j} \rightharpoonup \K z^\infty$. 
By Assumption $\mathrm{(v)}$, we have $\big\|\K z^{t_j} - \tilde{\nabla}\hc(v^{t_j})\big\| \to 0$. 
Therefore, $\tilde{\nabla}\hc(v^{t_j}) \rightharpoonup \K z^\infty$. Moreover, 
by Assumption $\mathrm{(ii)}$, we know that $\|v^{t_j} - u^\star\|\to 0$.
The subdifferential $\partial \hc$ is a maximally monotone operator, meaning its graph is strongly-weakly sequentially closed \citep[Proposition 20.38]{bau17}. 
It follows that the limit point $(u^\star, \K z^\infty)$ belongs to $\mathrm{gra}(\partial \hc)$,  where $\mathrm{gra}$ denotes the graph; that is, 
$\K z^\infty \in \partial \hc(u^\star)$.

Now, let us characterize the weak cluster points of $(x^t)_{t\geq 0}$.

\textbf{Step 1: The weak cluster points of $(x^t)_{t\geq 0}$ are primal solutions.} 
Given any solution $(x^\star,u^\star)$ to \eqref{eq2}, which therefore satisfies $\nabla \ff(x^\star)=-\K^* u^\star$, 
we define the Bregman distance $D_\ff: x\in\mathcal{X} \mapsto  \ff(x)-\ff(x^\star)+\langle x-x^\star,\K^* u^\star\rangle$. 
For every $t\geq 0$, 
we have $\mathcal{G}_{x^\star,u^\star}(x^t,u^t) \ge D_\ff(x^t) \ge \frac{1}{2L_\ff}\sqn{\nabla \ff(x^t)+\K^* u^\star}$ \citep[Lemma 3.4]{bub15}. 
Therefore, by Assumption $\mathrm{(iv)}$, $\big\|\nabla \ff(x^t)+\K^* u^\star\big\|\to 0$. The gradient $\nabla \ff$ is a maximally monotone operator, meaning its graph  is weakly-strongly sequentially closed \citep[Proposition 20.38]{bau17}. 
Consequently, for every weak cluster point $x^\infty$ of  $(x^t)_{t\geq 0}$, the weak convergence of the subsequence combined with the strong convergence of the gradient implies that $\nabla \ff(x^\infty) = -\K^* u^\star$. This fulfills the first optimality condition in \eqref{eq2}.

Let $x^\infty$ be a weak cluster point of $(x^t)_{t\geq 0}$. We now prove the second optimality condition, $\K x^\infty \in \partial \hc(u^\star)$. 
Because $\partial \hc$ is a maximally monotone operator, this inclusion is equivalent to showing that for every pair $(u, w) \in \mathrm{gra}(\partial \hc)$, the inequality $\langle u^\star - u ,\K x^\infty - w\rangle \ge 0$ is satisfied \citep[Definition 20.20]{bau17}. Let $(u, w) \in \mathrm{gra}(\partial \hc)$. For every $k\geq 1$, monotonicity yields $\langle v^k - u,\tilde{\nabla}\hc(v^k) - w \rangle \ge 0$, which we can expand as
\begin{equation}
0\leq \langle u^\star - u,\K z^k - w \rangle + e_k,\quad \text{where} \quad e_k\eqdef \langle v^k - u^\star, \K z^k - w  \rangle - \langle v^k - u, \K z^k  -\tilde{\nabla}\hc(v^k)\rangle.\label{eqin3}
\end{equation}
Since $(\K z^t - w)_{t\ge 0}$ is bounded, Assumptions $\mathrm{(ii)}$ and $\mathrm{(v)}$ imply that $e_k \rightarrow 0$. By unrolling Step \ref{eqapapcc4k} of \algn{APAPC}, we observe that for every $t\geq 1$, $x^t$ is a weighted average of all past estimates $z^k$; that is, $x^t = \sum_{k=1}^t w_{k,t} z^k$, where the weights $w_{k,t} \coloneqq \frac{1}{a_k} \prod_{j=k+1}^t \left(1 - \frac{1}{a_j}\right) \ge 0$ satisfy $\sum_{k=1}^t w_{k,t} = 1$. 
Multiplying the inequality \eqref{eqin3} by $w_{k,t}$ and summing over $k=1,\ldots,t$ yields
\begin{equation}
\langle u^\star - u,\K x^t - w \rangle + \sum_{k=1}^t w_{k,t} e_k \ge 0.\label{eqin2}
\end{equation}
By Assumption $\mathrm{(i)}$, $\sum_{j\ge 1} \frac{1}{a_j} = \infty$; thus, a standard result on infinite products guarantees that for every $k\ge 1$, $\lim_{t\rightarrow \infty} \prod_{j=k+1}^t \left(1 - \frac{1}{a_j}\right)= 0$, which in turn means $\lim_{t\rightarrow \infty} w_{k,t} = 0$. Consequently, the weights $w_{k,t}$ form a regular Toeplitz summation method, and the Toeplitz limit theorem ensures that $\sum_{k=1}^t w_{k,t} e_k \to 0$ \citep[Theorem 43.4]{kno51}. Let $(t_j)_{j \ge 0}$ be the subsequence such that $x^{t_j} \rightharpoonup x^\infty$. Because $\K$ is a bounded linear operator, it is weakly continuous, ensuring $\K x^{t_j} \rightharpoonup \K x^\infty$. Passing to the limit as $j \to \infty$ in \eqref{eqin2}, we obtain $\langle u^\star - u,\K x^\infty - w \rangle \ge 0$. This confirms that $\K x^\infty \in \partial \hc(u^\star)$. Therefore, $(x^\infty,u^\star)$ is a valid solution to \eqref{eq2}, meaning $x^\infty$ is a primal solution to \eqref{eq1c}.

\textbf{Step 2: Uniqueness of the weak cluster point.} We now prove that under the additional assumptions $\mathrm{(vi)}$--$\mathrm{(vii)}$,  the weak cluster point of $(x^t)_{t\geq 0}$ is unique. 
Let $\hat{x}^\star$ and $\check{x}^\star$ be two weak cluster points of $(x^t)_{t\geq 0}$. By Step 1, both are primal solutions. To prove that $\hat{x}^\star = \check{x}^\star$, we define the sequences
$\mathcal{A}^t \coloneqq \|x^t-\hat{x}^\star\|^2 - \|x^t-\check{x}^\star\|^2$ and $\mathcal{B}^t \coloneqq \|z^t-\hat{x}^\star\|^2 - \|z^t-\check{x}^\star\|^2$ for every $t\geq 0$.
By Assumption $\mathrm{(vi)}$, there exist nonnegative, nonincreasing Lyapunov sequences $(\mathcal{E}_{\hat{x}^\star}^t)_{t\geq 0}$ and $(\mathcal{E}_{\check{x}^\star}^t)_{t\geq 0}$. Evaluating their difference, using  \eqref{eqgapw} and applying Assumption $\mathrm{(vii)}$ yields 
\begin{align*}
2\gamma\big(\mathcal{E}_{\hat{x}^\star}^t - \mathcal{E}_{\check{x}^\star}^t\big) &=  \|z^t-\hat{x}^\star\|^2 - \|z^t-\check{x}^\star\|^2 + 2\gamma a_t^2 \big( \mathcal{G}_{\hat{x}^\star,u^\star}(x^t, u^t) - \mathcal{G}_{\check{x}^\star,u^\star}(x^t, u^t) \big)\\
&=  \|z^t-\hat{x}^\star\|^2 - \|z^t-\check{x}^\star\|^2+ 2\gamma a_t^2 \langle \K\check{x}^\star - \K\hat{x}^\star, u^t - u^\star \rangle \\
&=\|z^t-\hat{x}^\star\|^2 - \|z^t-\check{x}^\star\|^2 = \mathcal{B}^t.
\end{align*}
Because $(\mathcal{E}_{\hat{x}^\star}^t)_{t\geq 0}$ and $(\mathcal{E}_{\check{x}^\star}^t)_{t\geq 0}$ are bounded below and nonincreasing, they both converge. Consequently, the sequence $(\mathcal{B}^t)_{t\geq 0}$ converges to some  limit $\mathcal{B}^\infty \in \mathbb{R}$. 
Moreover, for every $t\geq 0$, we can expand the norms to obtain the algebraic identities:
\begin{align*}
\mathcal{B}^{t+1} & =-2\big\langle z^{t+1}, \hat{x}^{\star}-\check{x}^{\star}\big\rangle+\sqn{\hat{x}^{\star}}-\sqn{\check{x}^{\star}},\\
\mathcal{A}^{t+1} & =-2\big\langle x^{t+1}, \hat{x}^{\star}-\check{x}^{\star}\big\rangle+\sqn{\hat{x}^{\star}}-\sqn{\check{x}^{\star}},\\
\mathcal{A}^{t} & =-2\big\langle x^t, \hat{x}^{\star}-\check{x}^{\star}\big\rangle+\sqn{\hat{x}^{\star}}-\sqn{\check{x}^{\star}}. 
\end{align*}

From the momentum update in Step \ref{eqapapcc4k} of \algn{APAPC}, we have $z^{t+1}=x^{t+1} + (a_{t+1}-1) (x^{t+1}-x^t)$. Substituting this into the expressions above yields $\mathcal{B}^{t+1}=\mathcal{A}^{t+1} + (a_{t+1}-1) (\mathcal{A}^{t+1}-\mathcal{A}^{t})$. 
Because $a_t\rightarrow \infty$, there exists $T\geq 0$ such that $a_t>1$ for every $t\geq T$, and we have $\sum_{t= T}^\infty\frac{1}{a_t-1}\geq \sum_{t= T}^\infty\frac{1}{a_t}=\infty$. These conditions allow us to invoke Lemma A.4 of \citet{bot25}, which guarantees that $\mathcal{A}^{t}\rightarrow  \mathcal{B}^\infty$. Finally, let $(x^{t_j})_{j\geq 0}$ and $(x^{t'_j})_{j\geq 0}$ be two subsequences of $(x^t)_{t\geq 0}$ converging weakly to $\hat{x}^\star$ and $\check{x}^\star$, respectively. Then 
$\mathcal{A}^{t_j}\rightarrow -\sqn{\hat{x}^\star-\check{x}^{\star}}$ and $\mathcal{A}^{t'_j}\rightarrow \sqn{\check{x}^\star-\hat{x}^{\star}}$. Because the entire sequence $(\mathcal{A}^t)_{t \ge 0}$ converges to a %the 
single limit, 
these subsequential limits must coincide. Hence, $ \mathcal{B}^\infty = -\sqn{\hat{x}^\star-\check{x}^{\star}}=\sqn{\hat{x}^\star-\check{x}^{\star}}$, which implies $\hat{x}^\star=\check{x}^{\star}$.
It follows that $(x^t)_{t\geq 0}$ has exactly one weak cluster point $x^\star$, and therefore converges weakly to $x^\star$. Finally, because $\sqn{y^t-x^t}\to 0$, the sequence $(y^t)_{t\geq 0}$ also converges weakly to $x^\star$.
\end{proof}

We note that regarding Assumption $\mathrm{(vii)}$ of Theorem \ref{theoconva}, the relations $\K \hat{x}^\star \in \partial \hc(u^\star)$ and $\K \check{x}^\star \in \partial \hc(u^\star)$ hold. Therefore, a sufficient condition for $\K\hat{x}^\star=\K \check{x}^\star$ is simply that $\partial \hc(u^\star)$ is single-valued. This occurs, for instance, if $\h$ is strictly convex on $\mathcal{X}$. In particular, it holds in the linearly constrained setting of Section \ref{seclc}, where $\partial \hc(u^\star)=\{b\}$. We formally state this convergence result as follows:

\begin{theorem}[Point Convergence of \algn{APAPC} for Linearly Constrained Problems]\label{theolcp}
Consider the linearly constrained minimization setting of Section \ref{seclc}, with $\lambda^+_{\min}(\K\K^*) >0$ and $\mu_\g=0$. Let $(x^\star,u^\star)$ be a solution of \eqref{eq2lc}. In \algn{APAPC}, define the Lyapunov function 
\begin{align*}
\mathcal{E}^t_{x^\star}&\coloneqq \frac{1}{2\gamma}\sqn{z^{t}-x^\star}+
\frac{2a_{t}^2+\min\big(\frac{1}{2},\frac{1}{2a_{t}\gamma L_\ff}\big)
\gamma \tau a_{t}^2 \lambda^+_{\min}(\K\K^*)}{4\tau}
\sqn{v^{t}-u^{\star}}  \\
&\quad -\frac{a_{t}^2\gamma}{2}\sqn{\K^*(v^{t}-u^{\star})}+a_{t}^2 
\mathcal{G}_{x^\star,u^\star}(x^t,u^t)+\frac{a_{t}\tau}{2}\sqn{\K x^{t}-b},\quad\forall t\geq 0.
\end{align*}
Assume that $u^0=v^0\in\mathrm{ran}(\K)$, $\gamma\leq \frac{1}{2 L_\ff}$, $\tau=\frac{\nu}{\gamma\|\K\|^2}$ for some $\nu\in (0,1]$, $a_0=a_1=1$, and 
\begin{equation*}
a_{t+1}\coloneqq \min\left(a_t{\textstyle \sqrt{1+\frac{\nu\lambda^+_{\min}(\K\K^*)}{4 \|\K\|^2}}},{\textstyle \sqrt{a_t^2+a_{t}\frac{\nu\lambda^+_{\min}(\K\K^*)}{4\gamma L_\ff \|\K\|^2}},\frac{1+\sqrt{1+4a_t^2}}{2}}
\right),\quad\forall t\geq 1.
\end{equation*}
Then, according to Theorem \ref{theolc}, the sequence $(\mathcal{E}^{t}_{x^\star})_{t\ge 0}$ is nonincreasing (which implies that $(z^t)_{t \ge 0}$ is bounded), $\mathcal{G}_{x^\star,u^\star}(x^t,u^t) =\mathcal{O}(1/t^2)$, $a_t=\Theta(t)$, and $\sqn{v^{t}-u^{\star}}=\mathcal{O}(1/t)$. Moreover, it follows from \eqref{eq99} in the proof of Theorem \ref{theolc} that $\big\|\K z^{t}-\tilde{\nabla}\hc (v^t)\big\|=\|\K z^{t}-b\| \to 0$. Because $\K x^\star=b$ for any primal solution $x^\star$, Assumptions $\mathrm{(i)}$--$\mathrm{(vii)}$ of Theorem \ref{theoconva} are all satisfied. Consequently, there exists a solution $x^\star$ to \eqref{eq1lc} such that $(x^t)_{t\geq 0}$ and $(y^t)_{t\geq 0}$ both converge weakly to $x^\star$. 
\end{theorem}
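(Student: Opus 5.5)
The plan is to verify Assumptions $\mathrm{(i)}$--$\mathrm{(vii)}$ of Theorem~\ref{theoconva} one by one, using Theorem~\ref{theolc} with $\mu_\g=0$ (so $\g=0$), and then invoke its conclusion. The nonnegativity of the dual quadratic terms in $\mathcal{E}^t_{x^\star}$ follows from $\gamma\tau\|\K\|^2=\nu\leq 1$, as noted after Theorem~\ref{theo2}. Hence $\mathcal{E}^t_{x^\star}\geq \frac{1}{2\gamma}\sqn{z^t-x^\star}$. The monotonicity $\mathcal{E}^{t+1}_{x^\star}\le\mathcal{E}^t_{x^\star}$ from Theorem~\ref{theolc} then gives boundedness of $(z^t)$, which is $\mathrm{(iii)}$. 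The same bound gives $\mathcal{G}(x^t)\le \mathcal{E}^0_{x^\star}/a_t^2$.

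\textbf{Growth of $a_t$ and assumptions (i), (ii), (iv).} The recursion gives $a_{t+1}\le a_t+1$, so $a_t\le t$ and $\sum 1/a_t=\infty$. The stated lower bound $a_t=\Omega(t)$ gives $a_t\to\infty$, so $a_t=\Theta(t)$ and $\mathrm{(i)}$ holds. Since $u^\star$ is defined as the unique dual solution in $\mathrm{ran}(\K)$, and all $v^t$ lie in $\mathrm{ran}(\K)$, the bound $\sqn{v^t-u^\star}=\mathcal{O}(1/t)$ from Theorem~\ref{theolc} gives $\mathrm{(ii)}$. Here the dual space is understood as $\widetilde{\mathcal{U}}=\mathrm{ran}(\K)$, as in Section~\ref{seclc}. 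For $\mathrm{(iv)}$, the gap $\mathcal{G}(x)$ does not depend on the choice of $x^\star$, as noted in Section~\ref{seclc}. Hence $\mathcal{G}_{x^\star,u^\star}(x^t,u^t)=\mathcal{G}(x^t)\to 0$ for every saddle point.

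\textbf{Assumption (v).} We have $\tilde{\nabla}\hc\equiv b$, so we must show $\|\K z^t-b\|\to0$. I would sum \eqref{eq99}, in which the term $-\frac{\tau}{2}\sqn{\K z^{t+1}-b}$ appears on the right-hand side. Summing requires that the one-step inequality telescopes with nonincreasing coefficients. The proof of Theorem~\ref{theolc} shows that the passage from \eqref{eq99} to the Lyapunov recursion only discards nonnegative slack. Keeping the term $\frac{\tau}{2}\sqn{\K z^{t+1}-b}$ on the left then yields $\mathcal{E}^{t+1}_{x^\star}+\frac{\tau}{2}\sqn{\K z^{t+1}-b}\le\mathcal{E}^t_{x^\star}$. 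This needs care: the Jensen step used to produce the $\frac{a_{t+1}\tau}{2}\sqn{\K x^{t+1}-b}$ term already consumes $\sqn{\K z^{t+1}-b}$. To handle this, I would split $\sqn{\K z^{t+1}-b}$ in half before applying Jensen, accepting a harmless constant-factor change in the $\K x^t-b$ term of the Lyapunov function. Alternatively, I would drop that term, since it is not needed for the point convergence. Telescoping then gives $\sum_t\sqn{\K z^{t}-b}\le \frac{2}{\tau}\mathcal{E}^0_{x^\star}<\infty$, hence $\mathrm{(v)}$. I expect this bookkeeping to be the main obstacle, since the statement only asserts it follows ``from \eqref{eq99}''.

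\textbf{Assumptions (vi), (vii) and conclusion.} With $\mu_\g=0$, the Lyapunov function has the form $\frac{1}{2\gamma}\sqn{z^t-x^\star}+a_t^2\mathcal{G}+\mathcal{R}^t$. Here $\mathcal{R}^t$ collects the $v^t$-terms and the term $\frac{a_t\tau}{2}\sqn{\K x^t-b}$. The $v^t$-terms are nonnegative because $\nu\le1$. The term $\frac{a_t\tau}{2}\sqn{\K x^t-b}$ does not depend on $x^\star$, because $u^\star$ is unique and $b$ is fixed. It is nonincreasing by Theorem~\ref{theolc}, so $\mathrm{(vi)}$ holds. For $\mathrm{(vii)}$, every primal solution satisfies $\K x^\star=b$. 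Theorem~\ref{theoconva} then yields weak convergence of $(x^t)$ and $(y^t)$ to a common solution of \eqref{eq1lc}.
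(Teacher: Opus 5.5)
Your proposal is correct and takes the same route as the paper: you verify Assumptions (i)--(vii) of Theorem~\ref{theoconva} from the conclusions of Theorem~\ref{theolc} with $\mu_\g=0$, treating the dual space as $\mathrm{ran}(\K)$. Your handling of (v) is exactly the bookkeeping the paper leaves implicit when it says the claim ``follows from \eqref{eq99}'': you keep $-\frac{\tau}{2}\sqn{\K z^{t+1}-b}$ by dropping (or halving) the $\frac{a_t\tau}{2}\sqn{\K x^t-b}$ term before the Jensen step, so that $\sum_t \sqn{\K z^t-b}<\infty$ by telescoping.
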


\section{Conclusion}

We introduced the Accelerated Proximal Alternating Predictor-Corrector algorithm (\algn{APAPC}), successfully embedding Nesterov momentum 
into a fully split primal--dual method for structured convex optimization. By demonstrating that primal acceleration can be stabilized by exploiting the strong convexity of the dual problem, we established optimal $\mathcal{O}(1/t^2)$ sublinear convergence rates, as well as accelerated linear convergence, across several problem regimes. Furthermore, leveraging our unified Lyapunov framework, we characterized the weak convergence of the primal--dual iterates.

There are several promising avenues for future research. A natural extension is to generalize \algn{APAPC} to solve Problem \eqref{eq1} with an arbitrary nonsmooth function $\g$. However, while accelerated linear convergence is likely attainable when $\h$ is smooth (analogous to the results in Section \ref{sechs}), linear convergence will generally not hold for an arbitrary $\g$ in the settings of Sections \ref{secinj} and \ref{seclc} \citep{zha22}. 
Beyond deterministic optimization, future work will explore randomized variants of \algn{APAPC}. The incorporation of stochastic gradient estimators \citep{zha19,sal20,zha22,hua22, erh25} is of particular interest. Similarly, replacing exact proximal steps with stochastic or approximate evaluations  \citep{cha18,con22rp,con25smpm,com25} represents a critical next step toward reducing the computational burden in large-scale inverse problems \citep{pap25} and machine learning applications \citep{mis22,con26cs}.

\section*{Acknowledgements}
This work was supported by funding from King Abdullah University of Science and Technology (KAUST):\\
i) KAUST Baseline Research Scheme,\\
ii) Center of Excellence for Generative AI (award no.\ 5940),\\
iii) Competitive Research Grant (CRG) Program (award no.\ 6460).

\bibliographystyle{abbrvnat}
\bibliography{IEEEabrv,biblio}

\begin{thebibliography}{72}
\providecommand{\natexlab}[1]{#1}
\providecommand{\url}[1]{\texttt{#1}}
\expandafter\ifx\csname urlstyle\endcsname\relax
  \providecommand{\doi}[1]{doi: #1}\else
  \providecommand{\doi}{doi: \begingroup \urlstyle{rm}\Url}\fi

\bibitem[Alghunaim et~al.(2021)Alghunaim, Ryu, Yuan, and Sayed]{alg21}
S.~A. Alghunaim, E.~K. Ryu, K.~Yuan, and A.~H. Sayed.
\newblock Decentralized proximal gradient algorithms with linear convergence
  rates.
\newblock \emph{IEEE Trans. Automatic Control}, 66\penalty0 (6):\penalty0
  2787--2794, June 2021.

\bibitem[Allen-Zhu and Orecchia(2014)]{all14}
Z.~Allen-Zhu and L.~Orecchia.
\newblock Linear coupling: An ultimate unification of gradient and mirror
  descent.
\newblock preprint arXiv:1407.1537, 2014.

\bibitem[Attouch and Peypouquet(2016)]{att16}
H.~Attouch and J.~Peypouquet.
\newblock The rate of convergence of {N}esterov's accelerated forward-backward
  method is actually faster than {$1/k^2$}.
\newblock \emph{SIAM J. Optim.}, 26\penalty0 (3):\penalty0 1824--1834, 2016.

\bibitem[Aujol et~al.(2025)Aujol, Dossal, Labarri\`ere, and Rondepierre]{auj25}
J.-F. Aujol, C.~Dossal, H.~Labarri\`ere, and A.~Rondepierre.
\newblock {FISTA} restart using an automatic estimation of the growth
  parameter.
\newblock \emph{Journal of Optimization Theory and Applications}, 206:\penalty0
  51, 2025.

\bibitem[Auslender and Teboulle(2006)]{aus06}
A.~Auslender and M.~Teboulle.
\newblock Interior gradient and proximal methods for convex and conic
  optimization.
\newblock \emph{SIAM J. Optim.}, 16\penalty0 (3):\penalty0 697--725, 2006.

\bibitem[Bach et~al.(2012)Bach, Jenatton, Mairal, and Obozinski]{bac12}
F.~Bach, R.~Jenatton, J.~Mairal, and G.~Obozinski.
\newblock Optimization with sparsity-inducing penalties.
\newblock \emph{Found. Trends Mach. Learn.}, 4\penalty0 (1):\penalty0 1--106,
  2012.

\bibitem[Bauschke and Combettes(2017)]{bau17}
H.~H. Bauschke and P.~L. Combettes.
\newblock \emph{Convex Analysis and Monotone Operator Theory in Hilbert
  Spaces}.
\newblock Springer, New York, 2nd edition, 2017.

\bibitem[Bauschke and Moursi(2026)]{bau26}
H.~H. Bauschke and W.~M. Moursi.
\newblock Understanding {FISTA}'s weak convergence: A step-by-step introduction
  to the 2025 milestone.
\newblock preprint arXiv:2601.15398, 2026.

\bibitem[Beck(2017)]{bec17}
A.~Beck.
\newblock \emph{First-Order Methods in Optimization}.
\newblock MOS-SIAM Series on Optimization. SIAM, 2017.

\bibitem[Beck and Teboulle(2009)]{bec092}
A.~Beck and M.~Teboulle.
\newblock A fast iterative shrinkage-thresholding algorithm for linear inverse
  problems.
\newblock \emph{SIAM J. Imaging Sci.}, 2\penalty0 (1):\penalty0 183--202, 2009.

\bibitem[Bo{\c{t}} et~al.(2014)Bo{\c{t}}, Csetnek, and Hendrich]{bot14}
R.~I. Bo{\c{t}}, E.~R. Csetnek, and C.~Hendrich.
\newblock Recent developments on primal--dual splitting methods with
  applications to convex minimization.
\newblock In P.~M. Pardalos and T.~M. Rassias, editors, \emph{Mathematics
  Without Boundaries: Surveys in Interdisciplinary Research}, pages 57--99.
  Springer New York, 2014.

\bibitem[Bo{\c{t}} et~al.(2025{\natexlab{a}})Bo{\c{t}}, Chenchene, Csetnek, and
  Hulett]{bot25}
R.~I. Bo{\c{t}}, E.~Chenchene, E.~R. Csetnek, and D.~A. Hulett.
\newblock Accelerating diagonal methods for bilevel optimization: Unified
  convergence via continuous-time dynamics.
\newblock preprint arXiv:2505.14389, 2025{\natexlab{a}}.

\bibitem[Bo{\c{t}} et~al.(2025{\natexlab{b}})Bo{\c{t}}, Fadili, and
  Nguyen]{bot252}
R.~I. Bo{\c{t}}, J.~Fadili, and D.-K. Nguyen.
\newblock The iterates of {Nesterov’s} accelerated algorithm converge in the
  critical regimes.
\newblock preprint arXiv:2510.22715, Oct. 2025{\natexlab{b}}.

\bibitem[Bottou et~al.(2018)Bottou, Curtis, and Nocedal]{bot18}
L.~Bottou, F.~E. Curtis, and J.~Nocedal.
\newblock Optimization methods for large-scale machine learning.
\newblock \emph{SIAM review}, 60\penalty0 (2):\penalty0 223--311, 2018.

\bibitem[Bubeck(2015)]{bub15}
S.~Bubeck.
\newblock Convex optimization: {A}lgorithms and complexity.
\newblock \emph{Found. Trends Mach. Learn.}, 8\penalty0 (3--4):\penalty0
  231--357, 2015.

\bibitem[Cevher et~al.(2014)Cevher, Becker, and Schmidt]{cev14}
V.~Cevher, S.~Becker, and M.~Schmidt.
\newblock Convex optimization for big data: {S}calable, randomized, and
  parallel algorithms for big data analytics.
\newblock \emph{{IEEE} Signal Process. Mag.}, 31\penalty0 (5):\penalty0 32--43,
  2014.

\bibitem[Chambolle and Dossal(2015)]{dos15}
A.~Chambolle and C.~Dossal.
\newblock On the convergence of the iterates of the {"Fast Iterative
  Shrinkage/Thresholding Algorithm"}.
\newblock \emph{J. Optim. Theory Appl.}, 166:\penalty0 968--982, 2015.

\bibitem[Chambolle and Pock(2011)]{cha11a}
A.~Chambolle and T.~Pock.
\newblock A first-order primal-dual algorithm for convex problems with
  applications to imaging.
\newblock \emph{J. Math. Imaging Vision}, 40\penalty0 (1):\penalty0 120--145,
  May 2011.

\bibitem[Chambolle and Pock(2016{\natexlab{a}})]{cha16}
A.~Chambolle and T.~Pock.
\newblock An introduction to continuous optimization for imaging.
\newblock \emph{Acta Numerica}, 25:\penalty0 161--319, 2016{\natexlab{a}}.

\bibitem[Chambolle and Pock(2016{\natexlab{b}})]{cha162}
A.~Chambolle and T.~Pock.
\newblock On the ergodic convergence rates of a first-order {primal--dual}
  algorithm.
\newblock \emph{Math. Program.}, 159\penalty0 (1--2):\penalty0 253--287, Sept.
  2016{\natexlab{b}}.

\bibitem[Chambolle et~al.(2018)Chambolle, Ehrhardt, {Richt\'arik}, and
  {Sch\"onlieb}]{cha18}
A.~Chambolle, M.~J. Ehrhardt, P.~{Richt\'arik}, and C.-B. {Sch\"onlieb}.
\newblock Stochastic primal-dual hybrid gradient algorithm with arbitrary
  sampling and imaging applications.
\newblock \emph{SIAM J. Optim.}, 28\penalty0 (4):\penalty0 2783--2808, 2018.

\bibitem[Chen et~al.(2013)Chen, Huang, and Zhang]{che13}
P.~Chen, J.~Huang, and X.~Zhang.
\newblock A {primal--dual} fixed point algorithm for convex separable
  minimization with applications to image restoration.
\newblock \emph{Inverse Problems}, 29\penalty0 (2), 2013.

\bibitem[Chen et~al.(2014)Chen, Lan, and Ouyang]{che142}
Y.~Chen, G.~Lan, and Y.~Ouyang.
\newblock Optimal primal-dual methods for a class of saddle point problems.
\newblock \emph{SIAM J. Optim.}, 24:\penalty0 1779--1814, 2014.

\bibitem[Combettes(2024)]{com24}
P.~L. Combettes.
\newblock The geometry of monotone operator splitting methods.
\newblock \emph{Acta Numerica}, 33:\penalty0 487--632, 2024.

\bibitem[Combettes and Madariaga(2025)]{com25}
P.~L. Combettes and J.~I. Madariaga.
\newblock Almost-surely convergent randomly activated monotone operator
  splitting methods.
\newblock \emph{SIAM J. Imaging Sci.}, 18\penalty0 (4):\penalty0 2177--2205,
  2025.

\bibitem[Combettes and Pesquet(2010)]{com10}
P.~L. Combettes and J.-C. Pesquet.
\newblock Proximal splitting methods in signal processing.
\newblock In H.~H. Bauschke, R.~Burachik, P.~L. Combettes, V.~Elser, D.~R.
  Luke, and H.~Wolkowicz, editors, \emph{Fixed-Point Algorithms for Inverse
  Problems in Science and Engineering}. Springer-Verlag, New York, 2010.

\bibitem[Combettes and Pesquet(2021)]{com21}
P.~L. Combettes and J.-C. Pesquet.
\newblock Fixed point strategies in data science.
\newblock \emph{{IEEE} Trans. Signal Process.}, 69:\penalty0 3878--3905, 2021.

\bibitem[Combettes et~al.(2014)Combettes, Condat, Pesquet, and {V\~u}]{com14}
P.~L. Combettes, L.~Condat, J.-C. Pesquet, and B.~C. {V\~u}.
\newblock A forward--backward view of some primal--dual optimization methods in
  image recovery.
\newblock In \emph{{Proc.\ of} IEEE ICIP}, Paris, France, Oct. 2014.

\bibitem[Condat(2013)]{con13}
L.~Condat.
\newblock A primal-dual splitting method for convex optimization involving
  {L}ipschitzian, proximable and linear composite terms.
\newblock \emph{J. Optim. Theory Appl.}, 158\penalty0 (2):\penalty0 460--479,
  2013.

\bibitem[Condat and Richt{\'a}rik(2023)]{con22rp}
L.~Condat and P.~Richt{\'a}rik.
\newblock {RandProx}: {P}rimal-dual optimization algorithms with randomized
  proximal updates.
\newblock In \emph{{Proc.\ of} Int. Conf. Learning Representations (ICLR)},
  2023.

\bibitem[Condat et~al.(2022)Condat, Malinovsky, and Richt{\'a}rik]{con22}
L.~Condat, G.~Malinovsky, and P.~Richt{\'a}rik.
\newblock Distributed proximal splitting algorithms with rates and
  acceleration.
\newblock \emph{Frontiers in Signal Processing}, 1, Jan. 2022.

\bibitem[Condat et~al.(2023)Condat, Kitahara, Contreras, and
  Hirabayashi]{con23}
L.~Condat, D.~Kitahara, A.~Contreras, and A.~Hirabayashi.
\newblock Proximal splitting algorithms for convex optimization: A tour of
  recent advances, with new twists.
\newblock \emph{SIAM Review}, 65\penalty0 (2):\penalty0 375--435, 2023.

\bibitem[Condat et~al.(2025)Condat, Gasanov, and {Richt\'arik}]{con25smpm}
L.~Condat, E.~Gasanov, and P.~{Richt\'arik}.
\newblock The stochastic multi-proximal method for nonsmooth optimization.
\newblock preprint arXiv:2505.12409, 2025.

\bibitem[Condat et~al.(2026)Condat, Agarsk{\'y}, and Richt{\'a}rik]{con26cs}
L.~Condat, I.~Agarsk{\'y}, and P.~Richt{\'a}rik.
\newblock {CompressedScaffnew}: The first theoretical double acceleration of
  communication from local training and compression in distributed
  optimization.
\newblock \emph{Optimization}, 2026.

\bibitem[Davis and Yin(2017)]{dav17}
D.~Davis and W.~Yin.
\newblock A three-operator splitting scheme and its optimization applications.
\newblock \emph{Set-Val. Var. Anal.}, 25:\penalty0 829--858, 2017.

\bibitem[Dirren et~al.(2025)Dirren, Bianchi, Grontas, Lygeros, and
  {D\"orfler}]{dir25}
C.~Dirren, M.~Bianchi, P.~D. Grontas, J.~Lygeros, and F.~{D\"orfler}.
\newblock Contractivity and linear convergence in bilinear saddle-point
  problems: An operator-theoretic approach.
\newblock In \emph{{Proc.\ of Int.\ Conf.\ Artificial} Intelligence and
  Statistics (AISTATS)}, volume PMLR 258, 2025.

\bibitem[Driggs et~al.(2024)Driggs, Ehrhardt, {Sch\"onlieb}, and Tang]{dri24}
D.~Driggs, M.~J. Ehrhardt, C.-B. {Sch\"onlieb}, and J.~Tang.
\newblock Practical acceleration of the {Condat--V\~u} algorithm.
\newblock \emph{SIAM J. Imaging Sci.}, 17\penalty0 (4):\penalty0 2076--2109,
  2024.

\bibitem[Drori et~al.(2015)Drori, Sabach, and Teboulle]{dro15}
Y.~Drori, S.~Sabach, and M.~Teboulle.
\newblock A simple algorithm for a class of nonsmooth convex concave
  saddle-point problems.
\newblock \emph{Oper. Res. Lett.}, 43\penalty0 (2):\penalty0 209--214, 2015.

\bibitem[Ehrhardt et~al.(2025)Ehrhardt, Kereta, Liang, and Tang]{erh25}
M.~J. Ehrhardt, Z.~Kereta, J.~Liang, and J.~Tang.
\newblock A guide to stochastic optimisation for large-scale inverse problems.
\newblock \emph{Inverse Problems}, 41\penalty0 (5):\penalty0 053001, 2025.

\bibitem[Glowinski et~al.(2016)Glowinski, Osher, and Yin]{glo16}
R.~Glowinski, S.~J. Osher, and W.~Yin, editors.
\newblock \emph{Splitting Methods in Communication, Imaging, Science, and
  Engineering}.
\newblock Springer International Publishing, 2016.

\bibitem[Jang and Ryu(2025)]{jan252}
U.~Jang and E.~K. Ryu.
\newblock Point convergence of {Nesterov’s} accelerated gradient method: an
  {AI}-assisted proof.
\newblock preprint arXiv:2510.23513, Oct. 2025.

\bibitem[Jang et~al.(2025)Jang, Gupta, and Ryu]{jan25}
U.~Jang, S.~D. Gupta, and E.~K. Ryu.
\newblock Computer-assisted design of accelerated composite optimization
  methods: {OptISTA}.
\newblock \emph{Math. Program.}, 2025.

\bibitem[Knopp(1951)]{kno51}
K.~Knopp.
\newblock \emph{Theory and Application of Infinite Series}.
\newblock Blackie {\&} Son, 2nd edition, 1951.

\bibitem[Kovalev et~al.(2020)Kovalev, Salim, and {Richt\'arik}]{kov20}
D.~Kovalev, A.~Salim, and P.~{Richt\'arik}.
\newblock Optimal and practical algorithms for smooth and strongly convex
  decentralized optimization.
\newblock In \emph{{Proc.\ of} Conf. on Neural Information Processing Systems
  (NeurIPS)}, 2020.

\bibitem[Lee et~al.(2025)Lee, Yi, and Ryu]{lee25}
J.~Lee, S.~Yi, and E.~K. Ryu.
\newblock Convergence analyses of {Davis--Yin} splitting via scaled relative
  graphs.
\newblock \emph{SIAM J. Optim.}, 35\penalty0 (1):\penalty0 270--301, 2025.

\bibitem[Li et~al.(2022)Li, Lin, and Fang]{hua22}
H.~Li, Z.~Lin, and Y.~Fang.
\newblock Variance reduced {EXTRA} and {DIGing} and their optimal acceleration
  for strongly convex decentralized optimization.
\newblock \emph{Journal of Machine Learning Research}, 23:\penalty0 1--41,
  2022.

\bibitem[Loris and Verhoeven(2011)]{lor11}
I.~Loris and C.~Verhoeven.
\newblock On a generalization of the iterative soft-thresholding algorithm for
  the case of non-separable penalty.
\newblock \emph{Inverse Problems}, 27\penalty0 (12), 2011.

\bibitem[Mishchenko et~al.(2022)Mishchenko, Malinovsky, Stich, and
  {Richt\'arik}]{mis22}
K.~Mishchenko, G.~Malinovsky, S.~Stich, and P.~{Richt\'arik}.
\newblock {ProxSkip: Yes! Local Gradient Steps Provably Lead to Communication
  Acceleration! Finally!}
\newblock In \emph{{Proc.\ of} the 39th International Conference on Machine
  Learning (ICML)}, July 2022.

\bibitem[Nesterov(2004)]{nes04}
Y.~Nesterov.
\newblock \emph{Introductory lectures on convex optimization: a basic course}.
\newblock Kluwer Academic Publishers, 2004.

\bibitem[Nesterov(2013)]{nes13}
Y.~Nesterov.
\newblock Gradient methods for minimizing composite functions.
\newblock \emph{Mathematical Programming}, 140\penalty0 (1):\penalty0 125--161,
  2013.

\bibitem[{O'Connor} and Vandenberghe(2020)]{oco20}
D.~{O'Connor} and L.~Vandenberghe.
\newblock On the equivalence of the primal-dual hybrid gradient method and
  {Douglas--Rachford} splitting.
\newblock \emph{Math. Program.}, 79:\penalty0 85--108, 2020.

\bibitem[Palomar and Eldar(2009)]{pal09}
D.~P. Palomar and Y.~C. Eldar, editors.
\newblock \emph{Convex Optimization in Signal Processing and Communications}.
\newblock Cambridge University Press, 2009.

\bibitem[Papoutsellis et~al.(2025)Papoutsellis, Kereta, and
  Papafitsoros]{pap25}
E.~Papoutsellis, Z.~Kereta, and K.~Papafitsoros.
\newblock Why do we regularise in every iteration for imaging inverse problems?
\newblock In \emph{{Proc.\ of} International Conference on Scale Space and
  Variational Methods in Computer Vision (SSVM)}, pages 43--55. Springer Nature
  Switzerland, 2025.

\bibitem[Parikh and Boyd(2014)]{par14}
N.~Parikh and S.~Boyd.
\newblock Proximal algorithms.
\newblock \emph{Foundations and Trends in Optimization}, 3\penalty0
  (1):\penalty0 127--239, 2014.

\bibitem[Polson et~al.(2015)Polson, Scott, and Willard]{pol15}
N.~G. Polson, J.~G. Scott, and B.~T. Willard.
\newblock Proximal algorithms in statistics and machine learning.
\newblock \emph{Statist. Sci.}, 30\penalty0 (4):\penalty0 559--581, 2015.

\bibitem[Polyak(1963)]{pol63}
B.~T. Polyak.
\newblock Gradient methods for the minimisation of functionals.
\newblock \emph{Ussr Computational Mathematics and Mathematical Physics},
  3:\penalty0 864--878, 1963.

\bibitem[Polyak(1987)]{pol872}
B.~T. Polyak.
\newblock \emph{Introduction to Optimization}.
\newblock Optimization Software, Inc., Publications Division, New York, NY,
  USA, 1987.
\newblock Revised version of the 1987 original dated {Nov.\ }2010 available on
  ResearchGate.

\bibitem[Salim et~al.(2022{\natexlab{a}})Salim, Condat, Kovalev, and
  {Richt\'arik}]{sal22}
A.~Salim, L.~Condat, D.~Kovalev, and P.~{Richt\'arik}.
\newblock An optimal algorithm for strongly convex minimization under affine
  constraints.
\newblock In \emph{{Proc.\ of Int.\ Conf.\ Artificial} Intelligence and
  Statistics (AISTATS)}, volume PMLR 151, pages 4482--4498, 2022{\natexlab{a}}.

\bibitem[Salim et~al.(2022{\natexlab{b}})Salim, Condat, Mishchenko, and
  {Richt\'arik}]{sal20}
A.~Salim, L.~Condat, K.~Mishchenko, and P.~{Richt\'arik}.
\newblock Dualize, split, randomize: Toward fast nonsmooth optimization
  algorithms.
\newblock \emph{J. Optim. Theory Appl.}, July 2022{\natexlab{b}}.

\bibitem[Sra et~al.(2011)Sra, Nowozin, and Wright]{sra11}
S.~Sra, S.~Nowozin, and S.~J. Wright.
\newblock \emph{Optimization for Machine Learning}.
\newblock The MIT Press, 2011.

\bibitem[Stathopoulos et~al.(2016)Stathopoulos, Shukla, Szucs, Pu, and
  Jones]{sta16}
G.~Stathopoulos, H.~Shukla, A.~Szucs, Y.~Pu, and C.~N. Jones.
\newblock Operator splitting methods in control.
\newblock \emph{Foundations and Trends in Systems and Control}, 3\penalty0
  (3):\penalty0 249--362, 2016.

\bibitem[Taylor et~al.(2017)Taylor, Hendrickx, and Glineur]{tay17}
A.~B. Taylor, J.~M. Hendrickx, and F.~Glineur.
\newblock Smooth strongly convex interpolation and exact worst-case performance
  of first-order methods.
\newblock \emph{Math. Program.}, 161\penalty0 (1--2):\penalty0 307--345, 2017.

\bibitem[Taylor et~al.(2018)Taylor, Hendrickx, and Glineur]{tay18}
A.~B. Taylor, J.~M. Hendrickx, and F.~Glineur.
\newblock Exact worst-case convergence rates of the proximal gradient method
  for composite convex minimization.
\newblock \emph{J. Optim. Theory Appl.}, 178:\penalty0 455--476, 2018.

\bibitem[Tseng(2008)]{tse08}
P.~Tseng.
\newblock On accelerated proximal gradient methods for convex--concave
  optimization.
\newblock preprint, 2008.

\bibitem[Ushiyama(2025)]{ush25}
K.~Ushiyama.
\newblock A {$\sqrt{2}$}-accelerated {FISTA} for composite strongly convex.
\newblock preprint arXiv:2509.09295v2, Sept. 2025.

\bibitem[V{\~u}(2013)]{vu13}
B.~C. V{\~u}.
\newblock A splitting algorithm for dual monotone inclusions involving
  cocoercive operators.
\newblock \emph{Adv. Comput. Math.}, 38\penalty0 (3):\penalty0 667--681, Apr.
  2013.

\bibitem[Wu et~al.(2026)Wu, Zhang, Liu, and Ouyang]{wu26}
Y.~Wu, Y.~Zhang, L.~Liu, and Y.~Ouyang.
\newblock A note on the gradient-evaluation sequence in accelerated gradient
  methods.
\newblock preprint arXiv:2603.06937, 2026.

\bibitem[Xu et~al.(2020)Xu, Tian, Sun, and Scutari]{xu20}
J.~Xu, Y.~Tian, Y.~Sun, and G.~Scutari.
\newblock Accelerated primal–dual algorithm for distributed smooth convex
  optimization over networks.
\newblock In \emph{{Proc.\ of} International Conference on Artificial
  Intelligence and Statistics (AISTATS)}, volume PMLR 108, 2020.

\bibitem[Yan(2018)]{yan18}
M.~Yan.
\newblock A new primal-dual algorithm for minimizing the sum of three functions
  with a linear operator.
\newblock \emph{J. Sci. Comput.}, 76\penalty0 (3):\penalty0 1698--1717, Sept.
  2018.

\bibitem[Zhao(2022)]{zha22}
R.~Zhao.
\newblock Accelerated stochastic algorithms for convex-concave saddle-point
  problems.
\newblock \emph{Math. Oper. Res.}, 47:\penalty0 1443--1473, 2022.

\bibitem[Zhao et~al.(2019)Zhao, Haskell, and Tan]{zha19}
R.~Zhao, W.~B. Haskell, and V.~Y.~F. Tan.
\newblock An optimal algorithm for stochastic three-composite optimization.
\newblock In \emph{{Proc.\ of} International Conference on Artificial
  Intelligence and Statistics (AISTATS)}, volume PMLR 89, 2019.

\bibitem[Zhu(2025)]{zhu25}
Y.-N. Zhu.
\newblock Accelerated primal dual fixed point algorithm.
\newblock preprint arXiv:2511.00385, 2025.

\end{thebibliography}

\end{document}